\newcommand{\fine}{$\Box$}		    
\newtheorem{proposition}{\bf Proposition}
\newtheorem{corollary}{\bf Corollary}
\newenvironment{proof}{
\begin{trivlist}
\item[\hspace{\labelsep}{\bf\noindent Proof. }] }{\par\hfill\end{trivlist}
\par}
\title{\huge\bf
$M/M/1$ queue in two alternating environments and its heavy traffic approximation
\date{Author's version.  Published in:  {\em Journal of Mathematical Analysis and Applications}\ 465 (2018), pp.\ 973-1001,   
doi:   10.1016/j.jmaa.2018.05.043 \ -- \ 
URL: https://www.sciencedirect.com/sdfe/reader/pii/S0022247X18304360/pdf}
}
\author{
\large \bf Antonio Di Crescenzo\footnote{
Dipartimento di Matematica, Universit\`a degli Studi di Salerno, 
Via Giovanni Paolo II n.132, 84084 Fisciano (SA), Italy, 
E-mail: adicrescenzo@unisa.it}
\qquad
Virginia Giorno\footnote{
Dipartimento di Informatica,
Universit\`a di Salerno, Via Giovanni Paolo II n.\ 132, 84084 Fisciano (SA), Italy.
E-mail: giorno@unisa.it}
\\
\large \bf  Balasubramanian Krishna Kumar\footnote{
Department of Mathematics, Anna University, Chennai 600 025, India. 
E-mail: drbkkumar@hotmail.com}
\qquad
Amelia G. Nobile\footnote{
Dipartimento di Informatica,
Universit\`a di Salerno, Via Giovanni Paolo II n.\ 132, 84084 Fisciano (SA), Italy.
E-mail: nobile@unisa.it}
}
\begin{document} 
 
\maketitle

\begin{abstract}
We  investigate an $M/M/1$ queue operating in two switching environments, where the switch is 
governed by a two-state time-homogeneous Markov chain. This model allows to describe a system 
that is subject to regular operating phases alternating with anomalous working phases or random 
repairing periods. We first obtain the steady-state distribution of the process in terms of a generalized 
mixture of two geometric distributions.  In the special case when only one kind of switch is allowed, 
we analyze the transient distribution, and investigate the busy period problem. The analysis is also 
performed by means of a suitable heavy-traffic approximation which leads to a continuous random 
process. Its distribution satisfies a partial differential equation with randomly alternating infinitesimal 
moments. For the approximating process we determine the steady-state distribution, the transient 
distribution and a first-passage-time density.

\medskip\noindent
\emph{Keywords:} 
Steady-state distribution, First-passage time, Diffusion approximation, Alternating Wiener process 
\\
\emph{Mathematics Subject Classification:} 
60K25, 60K37, 60J60, 60J70 
\end{abstract}

\section{Introduction}\label{section1}
The $M/M/1$ queue is the most well-known queueing system, whose customers arrive according 
to a Poisson process, and the service times are exponentially distributed. Its generalizations are often 
employed to describe more complex systems, such as queues in the presence of catastrophes 
(see, for instance, Di Crescenzo {\em et al.}~\cite{DGN2003}, Kim and Lee \cite{Kimetal2014}, and 
Krishna Kumar and Pavai Madheswari~\cite{KP2005}). In some cases, the sequence of repeated 
catastrophes and successive repairs yields  alternating operative phases (see, for instance, 
Paz and Yechiali~\cite{PaYe2014} and Jiang {\em et al.}~\cite{Jiangetal2015} for the analysis of 
queues in a multi-phase random environment). Moreover, realistic situations related to queueing 
services are often governed by state-dependent rates (cf., for instance, Giorno {\em et al.}~\cite{GNP2018}), 
or by alternating behavior, such as cyclic polling systems (cf.\ Avissar and Yechiali~\cite{AvYe2012}). 
Specifically, the analysis of queueing systems characterized by alternating mechanisms has been 
object of investigation largely in the past. The first systematic contribution in this area was provided 
in Yechiali and Naor \cite{YeNa1971}, where the $M/M/1$ queue was analyzed in the steady-state 
regime when the rates of arrival and service are subject to Poisson alternations. 
A recent study due Huang and Lee \cite{HL2013} is concerning a similar queueing model with 
a finite size queue and a service mechanism characterized by randomly alternating behavior. 
\par
Other types of complex systems in alternating environment are provided by two single server 
queues, where customers arrive in a single stream, and each arrival creates simultaneously 
the work demands to be served by the two servers. Instances of such two-queue polling models have 
been studied in Boxma {\em et al.}~\cite{BoScYe2002} and Eliazar {\em et al.}~\cite{ElFiYe2002}. 
In other cases, instead, the alternating behavior of queueing systems is described by time-dependent 
arrival and service rates, such as in the $M_t/M_t/1$ queue subject to under-, over-, and critical loading. 
Heavy-traffic diffusion approximations or asymptotic expansions for such types of systems have 
been investigated in Di Crescenzo and Nobile~\cite{DCNo95}, Giorno {\em et al.}~\cite{GNR87}, 
Mandelbaum and Massey~\cite{MaMa1995}. 
\par
Attention has been devoted in the literature also to queues with more complex random switching 
mechanisms, that arise naturally in the study of packet arrivals to a local switch (see, for instance, 
Burman and Smith~\cite{BurSmi1986}). Similar mechanisms have been studied recently also by 
Arunachalam {\em et al.}~\cite{ArGuDh2010}, Pang and  Zhou~\cite{PaZh2016}, 
Liu and Yu~\cite{LY2016} 
and Perel and Yechiali~\cite{PeYe2017}. 
\par
Contributions in the area of queues with randomly varying arrival and service rates are due to 
Neuts~\cite{Neuts},  Kao and Lin~\cite{KaoLin}, and Lu and Serfozo~\cite{LuSerfozo}. 
Moreover, Boxma and Kurkova~\cite{BoKu2000} studied an $M/M/1$ queue for which the speed of 
the server alternates between two constant values, according to different time distributions. 
A similar problem for the $M/G/1$ queue was studied  by the same authors in \cite{BoKu2001}, 
whereas the case of the $M/M/\infty$ queue is treated by D'Auria~\cite{DAuria2014}. 

\subsection{Motivations} 
Along the lines of the above mentioned investigations, in this paper we study an $M/M/1$ 
queue subject to alternating behavior. 
The basic model retrace the alternating $M/M/1$ queue studied in \cite{YeNa1971}. 
Indeed, we assume that the characteristics of the queue are fluctuating randomly in time, under two 
operating environments which alternate randomly. Initially the system starts under the first 
environment with probability $p$, or from the second one w.p.\ $1-p$. Then, 
at time $t$ the customers arrival rates and the service rates are $(\lambda_i,\mu_i)$ 
if the  operational environment  is $\mathscr{E}(t)=i$, for $i=1,2$. 
The operational environment switches from $\mathscr{E}(t)=1$ to $\mathscr{E}(t)=2$ with 
rate $\eta_1$, whereas the reverse switch occurs with rate $\eta_2$. This setting allows 
to model queues based on two modes of customers arrivals, with fluctuating high-low 
rates, where the service rate is instantaneously adapted to the new arrival conditions. 
Moreover, the considered 
model is also suitable to describe instances in which only 
one kind of rate is subject to random fluctuations. For instance, the case when only the 
service rate is alternating between two values $\mu_1$ and $\mu_2$ refers to a  
queue which is subject to randomly occurring catastrophes, whose effect is to transfer 
the service mechanism to a slower server for the duration of a random repair time. 
\par
The above stated assumptions are also paradigmatic of realistic situations in which the 
underlying mechanism of the queue is affected by external conditions that 
alternate randomly, such as  systems subject to interruptions, or up-down periods. 
In this case, the adaptation of the rates occurs instantaneously, differently from other 
settings where customers observe the queue level before taking a decision 
(see, for instance,  Economou and Manou~\cite{EcMa2016}). 
\par
It is relevant to point out that the alternation between the rates may produce regulation effects 
for the queue mechanism. Indeed, if the current environment leads to a traffic congestion 
(i.e., $\lambda_i>\mu_i$ for $\mathscr{E}(t)=i$), then the switch to the other environment may 
yield a favorable consequence for the queue length (if $\lambda_{3-i}<\mu_{3-i}$). This can be 
achieved by increasing the service speed, or decreasing the customer arrival rates. Note that 
the above conditions on the arrival and service rates, with appropriate switching rates 
$\eta_1$ and $\eta_2$, may lead to a stable queueing system, even if the queue is not stable 
under one of the two environments. 
\subsection{Plan of the paper} 
In Section~\ref{section2} we investigate the  distribution of the number of customers and the 
current environment of the considered alternating queue. We first obtain the steady-state distribution 
of the system, 
which is expressed as a generalized mixture of two geometric distributions. This result 
provides an alternative solution to that obtained in \cite{YeNa1971} with a different approach. 
It is worth noting that the system admits of a steady-state distribution even in a case 
when one of the alternating environments does not possess a steady state. 
Furthermore, we also obtain  the conditional means and the  entropies of the process. 
\par
The transient probability distribution of the queue  is  studied in Section~\ref{section3}. 
Since the general case is not tractable, we analyse such distribution under the assumption that 
only a switch from environment $\mathscr{E}(t)=1$ to environment $\mathscr{E}(t)=2$ is allowed. 
In this case, we express the transient probabilities in a series form which involves 
the same distribution in the absence of environment switch. A similar result is also obtained 
for the first-passage-time (FPT) density through the zero state, aiming to investigate   
the busy period. The Laplace transform of the 
FPT density is also determined in order to evaluate the probability of busy period termination, 
and the related expectation. 
\par
In order to investigate the queueing system also under more general conditions we are lead to construct 
a heavy-traffic diffusion approximation of the queue-length process. This is obtained in Section~\ref{section4} 
by means of a customary scaling procedure similar to those adopted in Dharmaraja {\em et al.}\  
\cite{DDGN2015} and Di Crescenzo {\em et al.}\  \cite{DGN2003}. The distribution of the 
approximating continuous process satisfies a suitable partial differential equation with alternating terms. 
Examples of diffusive systems with alternating behavior can be found in the physics 
literature. For instance, Bez\'ak \cite{Be92} studied a modified Wiener process subject to Poisson-paced 
pulses. In this case the effect of pulses is the alternation of the infinitesimal variance. A similar (unrestricted) 
diffusion process characterized by alternating drift and constant infinitesimal variance 
has been studied in Di Crescenzo {\em et al.}\  \cite{DCDNR2005} and \cite{DCZ2015}. 
This is different from the  approximating diffusion process treated here, 
for which all infinitesimal moments are alternating. The approach adopted in \cite{DCZ2015} 
cannot be followed for the process under heavy traffic, since it is restricted by a reflecting boundary at 0. 
\par
Concerning the approximating process, which can be viewed as an
alternating Wiener process, we determine the steady-state density, expressed 
as a generalized mixture of two exponential densities. Then, in Section~\ref{section5}
for the approximating diffusion process we obtain the transient distribution when only one kind of switch is allowed. 
The distribution is decomposed in an integral form that involves the expressions of the classical Wiener process in the 
presence of a reflecting boundary at zero. Also for the alternating diffusion process we investigate the 
FPT density through the zero state, in order to come to a suitable approximation of the 
busy period. In this case, we express the related distribution in an integral form, and develop a Laplace 
transform-based approach aimed to study the FPT mean. 
\par
In the paper, the quantities of interest are investigated through computationally effective procedures by using 
MATHEMATICA$^{\footnotesize{\rm \textregistered}}$. 
%
\section{The queueing model}\label{section2}
Let $\{{\bf N}(t)=[N(t),\mathscr{E}(t)],t\geq 0\}$ be a two-dimensional continuous-time Markov chain, 
having state-space $\mathbb{N}_0\times\{1,2\}$ and transient probabilities  
\begin{equation}
 p_{n,i}(t)=\mathbb P[{\bf N}(t)=(n,i)], \qquad  n\in \mathbb{N}_0, \quad i=1,2, \quad t \geq 0,
 \label{eq:transprobab}
\end{equation}
where 
\begin{equation}
{\bf N}(0)=
\left\{\begin{array}{ll}
(j,1),&\;{\rm with\;probability}\; p,\\
(j,2),&\;{\rm with\;probability}\;1-p,
\end{array}\right.
 \label{eq:initcondit_N}
\end{equation}
with $j\in \mathbb{N}_0$. Here, $N(t)$ describes the number of customers at time $t$ in a $M/M/1$ queueing system 
operating under two randomly switching environments, and $\mathscr{E}(t)$ denotes the operational environment 
at time $t$. Specifically, if $\mathscr{E}(t)=i$ then the arrival rate of customers at time $t$ is $\lambda_i$ whereas 
the service rate is $\mu_i$, for $i=1,2$, with constant parameters $\lambda_1,\lambda_2, \mu_1,\mu_2>0$. 
\par
We assume that  two operational regimes alternate according to fixed constant rates. 
In other terms, if the system is operating at time $t$ in the environment  $\mathscr{E}(t)=1$ then it  switches to 
the environment $\mathscr{E}(t)=2$ with rate $\eta_1\geq 0$, whereas if $\mathscr{E}(t)=2$ then the system 
switches in the environment $\mathscr{E}(t)=1$ with rate $\eta_2\geq 0$, with $\eta_1+\eta_2>0$. 
Figure~\ref{fig:chain_unilateral} shows the state diagram of ${\bf N}(t)$. 
We recall that the considered setting is in agreement with the model introduced in \cite{YeNa1971}. 
%
\begin{figure}[t]  
\centering
\includegraphics[scale=0.45]{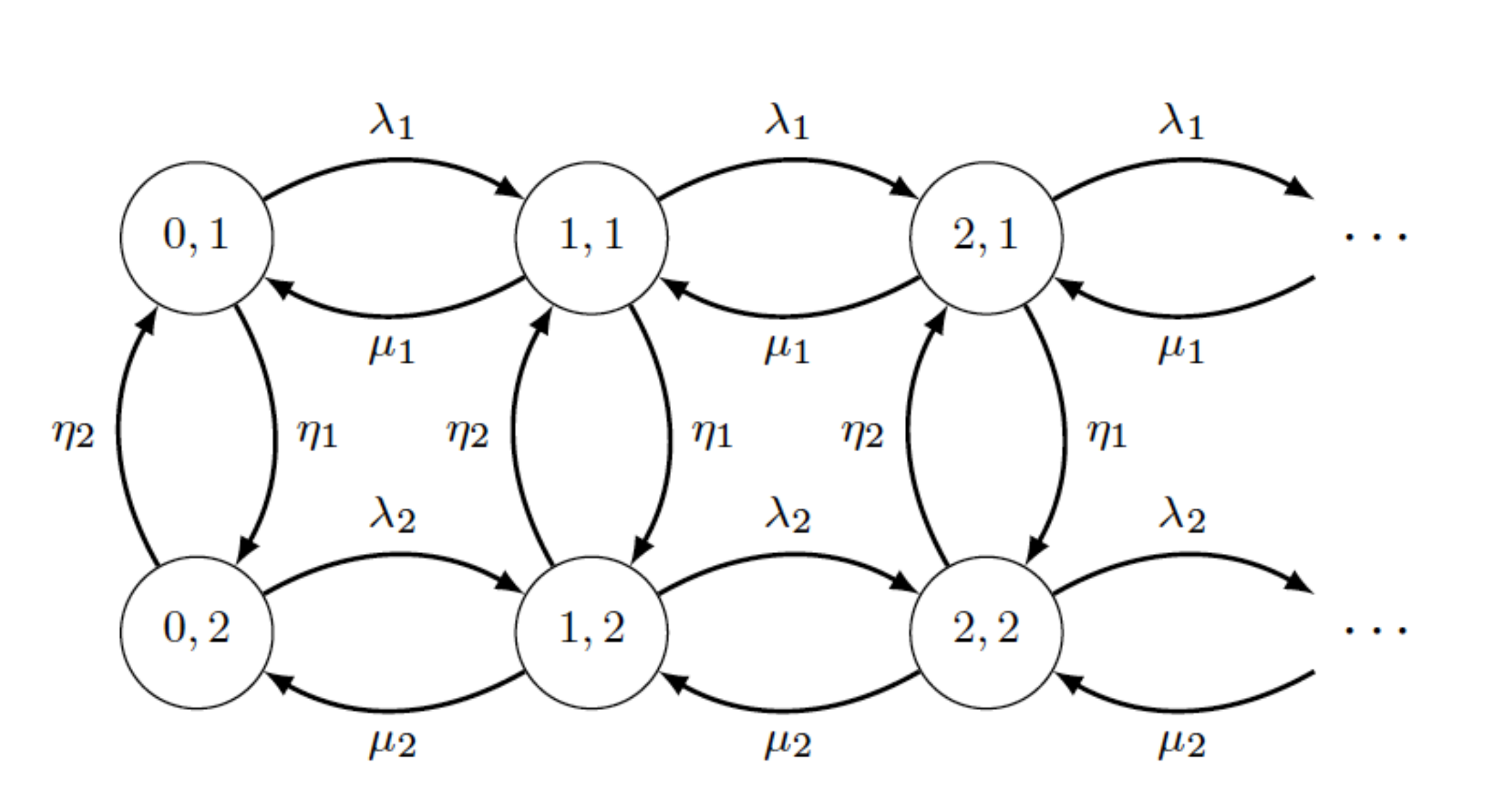}\\
\caption{The state diagram of the Markov chain ${\bf N}(t)$.}
\label{fig:chain_unilateral}
\end{figure}
\par
For a fixed $j\in \mathbb{N}_0$, we assume that the system is subject to random initial conditions given by a Bernoulli trial 
on the states $(j,1)$ and $(j,2)$. Indeed, for a given  $p\in [0,1]$, recalling (\ref{eq:initcondit_N}),  we have 
\begin{equation}
 p_{n,1}(0)=p\, \delta_{n,j}, \qquad p_{n,2}(0)=(1-p)\,\delta_{n,j},
 \label{initial_condition}
\end{equation}
where $\delta_{n,j}$ is the Kronecker's delta. 
\par
From the specified assumptions we have the following forward Kolmogorov equations for the 
first operational regime: 
\begin{eqnarray}
&&\hspace{-0.8cm} 
{dp_{0,1}(t)\over dt}=-(\lambda_1+\eta_1)\,p_{0,1}(t)+\eta_2\,p_{0,2}(t)+\mu_1\,p_{1,1}(t),
\nonumber\\
&&\hspace{-0.8cm} 
{dp_{n,1}(t)\over dt}=-(\lambda_1+\mu_1+\eta_1)\,p_{n,1}(t)+\eta_2\,p_{n,2}(t)+\mu_1\,p_{n+1,1}(t)+\lambda_1\,p_{n-1,1}(t),\nonumber\\
&& \hspace*{9cm} n\in \mathbb N, 
 \label{equat_env1} 
\end{eqnarray}
and for the second operational regime:
\begin{eqnarray}
&&\hspace{-0.8cm} 
{dp_{0,2}(t)\over dt}=-(\lambda_2+\eta_2)\,p_{0,2}(t)+\eta_1\,p_{0,1}(t)+\mu_2\,p_{1,2}(t)\nonumber\\
&&\hspace{-0.8cm} 
{dp_{n,2}(t)\over dt}=-(\lambda_2+\mu_2+\eta_2)\,p_{n,2}(t)+\eta_1\,p_{n,1}(t)+\mu_2\,p_{n+1,2}(t)+\lambda_2\,p_{n-1,2}(t),\nonumber\\
&& \hspace*{9cm} n\in \mathbb N. 
 \label{equat_env2} 
\end{eqnarray}
Clearly, for all $t\geq 0$ one has:
\begin{equation}
\sum_{n=0}^{+\infty}\bigl[p_{n,1}(t)+p_{n,2}(t)\bigr]=1.
\label{normalization_condition}
\end{equation}
%
\subsection{Steady-state distribution}
%
Let us now investigate the steady-state distribution of the two-environ\-ment $M/M/1$ queue. 
We will show that it can be expressed as a generalized mixture of two geometric distributions. 
Our approach is different from the analysis performed in \cite{YeNa1971}, where the 
steady-state distribution is achieved through recursive formulas. 
\par
Let ${\bf N}=(N,\mathscr{E})$ be the two-dimensional random variable describing the number of 
customers and the environment of the system in the steady-state regime. We aim to determine  
the steady-state probabilities for the $M/M/1$ queue under the two environments, defined as 
\begin{equation}
 q_{n,i}=\mathbb P(N=n, \mathscr{E}=i)
 =\lim_{t\to +\infty}p_{n,i}(t),\qquad n\in \mathbb N_0, \quad i=1,2.
\label{steady_state}
\end{equation}
From (\ref{equat_env1}) and (\ref{equat_env2}) one has the following difference equations:
\begin{eqnarray*}
&&-(\lambda_1+\eta_1)\,q_{0,1}+\eta_2\,q_{0,2}+\mu_1\,q_{1,1}=0, \\
&&-(\lambda_1+\mu_1+\eta_1)\,q_{n,1}+\eta_2\,q_{n,2}+\mu_1\,q_{n+1,1}+\lambda_1\,q_{n-1,1}=0,
\qquad n\in \mathbb N, \\
&&-(\lambda_2+\eta_2)\,q_{0,2}+\eta_1\,q_{0,1}+\mu_2\,q_{1,2}=0, \\
&&-(\lambda_2+\mu_2+\eta_2)\,q_{n,2}+\eta_1\,q_{n,1}+\mu_2\,q_{n+1,2}+\lambda_2\,q_{n-1,2}=0,  
\qquad n\in \mathbb N.
\end{eqnarray*}
Hence, denoting by 
\begin{equation*}
G_i(z)=\mathbb{E}[z^N  \mathbbm{1}_{\mathscr{E}=i}]=\sum_{n=0}^{+\infty}z^nq_{n,i},\qquad 0<z<1,\qquad i=1,2
\end{equation*}
the probability generating functions for the two environments in steady-state regime, one has:
\begin{eqnarray}
G_1(z)={\eta_2\,\mu_2\,z\,q_{0,2}-\mu_1\,q_{0,1}\big[\lambda_2\,z^2-(\lambda_2+\mu_2+\eta_2)\,z+\mu_2\big]\over P(z)},
\nonumber\\
\label{generating_function_1}\\
G_2(z)={\eta_1\,\mu_1\,z\,q_{0,1}-\mu_2\,q_{0,2}\big[\lambda_1\,z^2-(\lambda_1+\mu_1+\eta_1)\,z+\mu_1\big]\over P(z)},
\nonumber
\end{eqnarray}
where $P(z)$ is  the following third-degree polynomial in $z$ 
(see Eq.\ (22) of \cite{YeNa1971}): 
\begin{eqnarray}
&&\hspace*{-1.2cm}P(z)
=\lambda_1\lambda_2z^3-\big[\lambda_1\lambda_2+\lambda_1\mu_2+\lambda_1\eta_2+\mu_1\lambda_2+\eta_1\lambda_2\big]z^2
\nonumber\\
&&\hspace*{0.1cm}+\big[\lambda_1\mu_2+\mu_1\lambda_2+\mu_1\mu_2+\mu_1\eta_2+\eta_1\mu_2\big]z-\mu_1\mu_2, 
\qquad 0<z<1.
\label{third_degree_polynomial}
\end{eqnarray}
By taking into account the normalization condition $G_1(1)+G_2(1)=1$, 
from (\ref{generating_function_1}) we get:
\begin{equation}
\mu_1\,q_{0,1}+\mu_2\,q_{0,2}={ \eta_1(\mu_2-\lambda_2)+ \eta_2 (\mu_1-\lambda_1)\over \eta_1+\eta_2}\cdot
\label{equilibrium_condition}
\end{equation}
It is worth noting that Eq.~(\ref{equilibrium_condition}) is a suitable extension of the classical condition 
for the $M/M/1$ queue in the steady-state, i.e.\  $\mu\,q_{0}= \mu-\lambda$. Moreover, recalling that 
$\eta_1+\eta_2>0$, Eq.~(\ref{equilibrium_condition}) shows that the existence of the equilibrium 
distribution is guaranteed if and only if one of the following cases holds:
\begin{description}
\item{\em (i)}  $\eta_2=0$ and $\lambda_2/\mu_2<1$, 
\item{\em (ii)}  $\eta_1=0$ and $\lambda_1/\mu_1<1$, 
\item{\em (iii)}   $\eta_1>0$, $\eta_2>0$ and  $\eta_1(\mu_2-\lambda_2)+ \eta_2 (\mu_1-\lambda_1)>0$. 
\end{description}
\par
Hereafter, we  consider separately the  three cases.
\subsection*{$\bullet$ Case {\it (i)}}
If $\eta_2=0$ and $\lambda_2/\mu_2<1$ one can easily prove that 
\begin{equation}
q_{n,1}=0, \qquad 
q_{n,2}=\Bigl(1-{\lambda_2\over\mu_2}\Bigr)\,\Bigl({\lambda_2\over\mu_2}\Bigr)^n, 
\qquad n\in \mathbb N_0. 
\label{eq:eqdistribution}
\end{equation}
Therefore, a steady-state regime does not hold for the $M/M/1$ queue under the environment 
$\mathscr{E}=1$, whereas a geometric-distributed steady-state regime exists for $\mathscr{E}=2$. 
In conclusion, if $\eta_2=0$ and $\lambda_2/\mu_2<1$ then $N$ admits of a geometric 
steady-state distribution $q_n=q_{n,1}+q_{n,2}$  with parameter $\lambda_2/\mu_2$.
\subsection*{$\bullet$  Case {\it (ii)}}
If $\eta_1=0$ and $\lambda_1/\mu_1<1$, similarly to case {\em (i)}, one has 
$$
 q_{n,1}=\Bigl(1-{\lambda_1\over\mu_1}\Bigr)\,\Bigl({\lambda_1\over\mu_1}\Bigr)^n,
 \qquad q_{n,2}=0, \qquad n\in \mathbb N_0.
$$
Hence, in this case $N$ has a geometric  steady-state distribution 
$q_n=q_{n,1}+q_{n,2}$  with parameter $\lambda_1/\mu_1$.
\subsection*{$\bullet$  Case {\it (iii)}}
Let $\eta_1>0$,  $\eta_2>0$ and   $\eta_1(\mu_2-\lambda_2)+ \eta_2 (\mu_1-\lambda_1)>0$. 
These assumptions are in agreement with the conditions given in \cite{YeNa1971}.  
Denoting by $\xi_1$, $\xi_2$, $\xi_3$ the roots of $P(z)$, given in (\ref{third_degree_polynomial}), one has
\begin{eqnarray}
&&\hspace*{-0.5cm}\xi_1+\xi_2+\xi_3={\lambda_1\lambda_2+\lambda_1\mu_2+\lambda_1\eta_2+\lambda_2\mu_1+\lambda_2\eta_1\over\lambda_1\lambda_2},
\nonumber\\
&&\hspace*{-0.5cm}\xi_1\xi_2+\xi_1\xi_3+\xi_2\xi_3
={\lambda_1\mu_2+\lambda_2\mu_1+\mu_1\mu_2+\mu_1\eta_2+\mu_2\eta_1\over \lambda_1\lambda_2},
\label{prop1_roots}\\
&&\hspace*{-0.5cm}\xi_1\xi_2\xi_3={\mu_1\mu_2\over\lambda_1\lambda_2},\nonumber
\end{eqnarray}
so that $\xi_1+\xi_2+\xi_3>0$, $\xi_1\xi_2+\xi_1\xi_3+\xi_2\xi_3>0$ and $\xi_1\xi_2\xi_3>0$.  
Moreover, we note that 
\begin{equation}
(\xi_1-1)(\xi_2-1)(1-\xi_3)={ \eta_1(\mu_2-\lambda_2)+ \eta_2 (\mu_1-\lambda_1)\over \lambda_1\lambda_2}>0,
\label{prop2_roots}
\end{equation}
and that, due to (\ref{third_degree_polynomial}), 
\begin{eqnarray}
&&\hspace*{-0.5cm} P(0)=-\mu_1\mu_2<0, \qquad 
 P(1)=\eta_1(\mu_2-\lambda_2)+\eta_2(\mu_1-\lambda_1)>0, \nonumber\\
 \label{prop3_roots}\\
&&\hspace*{-0.5cm} P\Big(\frac{\mu_1}{\lambda_1}\Big)=\frac{\eta_1\mu_1\lambda_2}{\lambda_1}
 \Big(\frac{\mu_2}{\lambda_2}-\frac{\mu_1}{\lambda_1}\Big),
 \qquad 
 P\Big(\frac{\mu_2}{\lambda_2}\Big)=\frac{\eta_2\mu_2\lambda_1}{\lambda_2}
 \Big(\frac{\mu_1}{\lambda_1}-\frac{\mu_2}{\lambda_2}\Big).\nonumber
\end{eqnarray}
Hence, $P(z)$ has three positive roots, two of them greater than 1 and one less than 1. 
Hereafter we show that the present method allows us to express the distribution 
of interest in closed form, as a generalized mixture of geometric distributions. Hence, 
we assume that $\xi_1>1$, $\xi_2>1$ and $0<\xi_3<1$, and thus  
$P(z)=\lambda_1\lambda_2(z-\xi_1)(z-\xi_2)(z-\xi_3)$. 
%
%
\begin{proposition}\label{prop:ssprob}
If $\eta_1>0$, $\eta_2>0$ and $\eta_1(\mu_2-\lambda_2)+ \eta_2 (\mu_1-\lambda_1)>0$,  
then the joint steady-state probabilities of ${\bf N}=(N,\mathscr{E})$ can be expressed in terms of the 
roots $\xi_1>1$, $\xi_2>1$ and $0<\xi_3<1$ of the polynomial (\ref{third_degree_polynomial}) as follows: 
\begin{equation}
q_{n,i}={\eta_{3-i}\over\eta_1+\eta_2} \Bigl[ A_i\,\mathbb{P}(V_1=n)+(1-A_i)\,\mathbb{P}(V_2=n)\Bigr],
\qquad n\in\mathbb{N}_0,i=1,2,
\label{mixture_discrete_environments}
\end{equation}
where  $P(V_i=n)=(1-1/\xi_i)(1/\xi_i)^n$ $\;(n\in\mathbb{N}_0,\;i=1,2)$ and 
\begin{equation}
A_i={\xi_1\xi_3\bigl[\eta_1(\mu_2-\lambda_2)+\eta_2(\mu_1-\lambda_1)\bigr]
\over\lambda_i\mu_i(1-\xi_3)(\xi_1-1)(\xi_1-\xi_2)}\,{\mu_i-\lambda_i\xi_2\over \mu_{3-i}-\lambda_{3-i}\xi_3},
\quad i=1,2.
\label{coef_mixture_discrete}
\end{equation}
\end{proposition}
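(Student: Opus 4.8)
The plan is to determine the two unknown boundary probabilities $q_{0,1}$ and $q_{0,2}$ explicitly, and then substitute them back into the generating functions $G_1(z)$ and $G_2(z)$ from (\ref{generating_function_1}) to obtain the closed-form expressions for $q_{n,i}$. The key structural observation is that $G_i(z)$ is a rational function whose denominator $P(z)=\lambda_1\lambda_2(z-\xi_1)(z-\xi_2)(z-\xi_3)$ has only the root $\xi_3$ inside (and the roots $\xi_1,\xi_2$ outside) the unit disc. Since $G_i(z)$ is analytic for $|z|<1$ and in particular must remain finite at $z=\xi_3\in(0,1)$, the numerator of each $G_i(z)$ must vanish at $z=\xi_3$. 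This gives two linear equations in the unknowns $q_{0,1},q_{0,2}$, which together with the normalization relation (\ref{equilibrium_condition}) form an over-determined but consistent system (consistency being guaranteed because $\xi_3$ is genuinely a root of $P$). Solving any two of these, and using Vieta's relations (\ref{prop1_roots}) to simplify, yields $q_{0,1}$ and $q_{0,2}$ in terms of $\xi_1,\xi_2,\xi_3$ and the rates.

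Next I would perform the partial-fraction decomposition of $G_i(z)$. Writing the numerator of $G_i(z)$ — call it $N_i(z)$, a quadratic polynomial in $z$ — we have $G_i(z)=N_i(z)/[\lambda_1\lambda_2(z-\xi_1)(z-\xi_2)(z-\xi_3)]$. Because $N_i(\xi_3)=0$, the factor $(z-\xi_3)$ cancels, leaving $G_i(z)=N_i(z)/[\lambda_1\lambda_2(z-\xi_3)(z-\xi_1)(z-\xi_2)]$ reducing to a sum of two simple poles at $z=\xi_1$ and $z=\xi_2$. Each term $\tfrac{c}{\xi_k-z}$ expands as $\tfrac{c}{\xi_k}\sum_{n\ge0}(z/\xi_k)^n$, so $q_{n,i}=B_{1,i}(1/\xi_1)^n+B_{2,i}(1/\xi_2)^n$ for appropriate constants $B_{k,i}$. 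Matching this to the claimed form (\ref{mixture_discrete_environments}), where $\mathbb{P}(V_k=n)=(1-1/\xi_k)(1/\xi_k)^n$, amounts to identifying $B_{k,i}=\tfrac{\eta_{3-i}}{\eta_1+\eta_2}A_i^{(k)}(1-1/\xi_k)$ with $A_i^{(1)}=A_i$ and $A_i^{(2)}=1-A_i$; the fact that the two coefficients $A_i$ and $1-A_i$ sum to one should come out automatically from the normalization $G_i(1)$ combined with (\ref{equilibrium_condition}), and I would verify this as a consistency check rather than impose it.

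The computation of the residue of $G_i(z)$ at $z=\xi_1$ is what produces the formula (\ref{coef_mixture_discrete}) for $A_i$: the residue is $N_i(\xi_1)/[\lambda_1\lambda_2(\xi_1-\xi_3)(\xi_1-\xi_2)]$, and after inserting the explicit quadratic $N_i$ (which for $i=1$ is $\eta_2\mu_2\xi_1 q_{0,2}-\mu_1 q_{0,1}[\lambda_2\xi_1^2-(\lambda_2+\mu_2+\eta_2)\xi_1+\mu_2]$, and symmetrically for $i=2$) together with the values of $q_{0,1},q_{0,2}$ found in the first step and the identity (\ref{prop2_roots}), the expression should collapse to the stated one. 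In particular the factor $\mu_i-\lambda_i\xi_2$ and the ratio $(\mu_{3-i}-\lambda_{3-i}\xi_3)$ in the denominator should emerge from evaluating the bracketed quadratic in $N_i$ at $z=\xi_1$ after using that $\xi_2$ is the other root of the relevant quadratic factor, and from the expression for $q_{0,3-i}$ respectively.

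The main obstacle I expect is the bookkeeping in the algebraic simplification: there are several equivalent ways to write $q_{0,1},q_{0,2}$, and getting the residue to reduce cleanly to (\ref{coef_mixture_discrete}) requires judicious use of all three Vieta relations (\ref{prop1_roots}), the factorization $P(1)=\lambda_1\lambda_2(1-\xi_1)(1-\xi_2)(1-\xi_3)$ matched against $P(1)=\eta_1(\mu_2-\lambda_2)+\eta_2(\mu_1-\lambda_1)$ from (\ref{prop3_roots}), and possibly the values $P(\mu_i/\lambda_i)$ to rewrite factors like $\mu_i-\lambda_i\xi_k$. This is routine in principle but error-prone; I would organize it by first pinning down $q_{0,1}$ and $q_{0,2}$ in a symmetric form, then computing one residue in full detail and invoking the $1\leftrightarrow2$ symmetry (with $\eta_i\leftrightarrow\eta_{3-i}$, $\lambda_i\leftrightarrow\lambda_{3-i}$, $\mu_i\leftrightarrow\mu_{3-i}$, and $\xi_1\leftrightarrow\xi_2$ as needed) for the rest, and finally checking that $\sum_{n,i}q_{n,i}=1$ holds as it must.
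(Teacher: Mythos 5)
Your proposal is correct and follows essentially the same route as the paper: the authors likewise pin down $q_{0,1},q_{0,2}$ by forcing the numerators of (\ref{generating_function_1}) to vanish at $z=\xi_3$ together with (\ref{equilibrium_condition}), cancel the factor $(z-\xi_3)$ to get (\ref{generating_function_2}), and then expand the resulting linear-over-quadratic rational functions in powers of $z$ — which is exactly your partial-fraction/residue computation at the poles $\xi_1,\xi_2$. The only cosmetic difference is that the paper records the intermediate coefficients in the symmetric form $(\xi_1^{n+1}-\xi_2^{n+1})/(\xi_1-\xi_2)$ before regrouping into the generalized mixture, rather than isolating the two residues separately.
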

\begin{proof} Since $P(\xi_3)=0$, to ensure the  convergence of the probability generating functions (\ref{generating_function_1}), 
we impose that their numerators   tend to zero as $z\to\xi_3$. Hence, by virtue of (\ref{equilibrium_condition}), 
one has (see also Eqs.\ (26) and (27) of \cite{YeNa1971}):  
\begin{eqnarray}
&&q_{0,1}={\eta_2\xi_3\over \mu_1(1-\xi_3)(\mu_2-\lambda_2\xi_3)}\;{ \eta_1(\mu_2-\lambda_2)+ \eta_2 (\mu_1-\lambda_1)\over \eta_1+\eta_2},\nonumber\\
&&\label{equil_probabilities_zero}\\
&&q_{0,2}={\eta_1\xi_3\over \mu_2(1-\xi_3)(\mu_1-\lambda_1\xi_3)}\;{ \eta_1(\mu_2-\lambda_2)+ \eta_2 (\mu_1-\lambda_1)\over \eta_1+\eta_2}\cdot\nonumber
\end{eqnarray}
Note that, due to (\ref{prop1_roots}), (\ref{prop2_roots}) and (\ref{prop3_roots}), 
one has $\xi_3\neq \mu_1/\lambda_1$ and $\xi_3\neq \mu_2/\lambda_2$. 
Making use of (\ref{equil_probabilities_zero}), from (\ref{generating_function_1}) one finally obtains:
\begin{eqnarray}
G_1(z)={\eta_2 \bigl[\eta_1(\mu_2-\lambda_2)+ \eta_2 (\mu_1-\lambda_1)\bigr]
\over (1-\xi_3)(\mu_2-\lambda_2\xi_3)(\eta_1+\eta_2)}\;
{\mu_2-\lambda_2\xi_3\,z\over \lambda_1\lambda_2(z-\xi_1)(z-\xi_2)},\nonumber\\
\label{generating_function_2}\\
G_2(z)={\eta_1 \bigl[\eta_1(\mu_2-\lambda_2)+ \eta_2 (\mu_1-\lambda_1)\bigr]
\over (1-\xi_3)(\mu_1-\lambda_1\xi_3)(\eta_1+\eta_2)}\;
{\mu_1-\lambda_1\xi_3\,z\over \lambda_1\lambda_2(z-\xi_1)(z-\xi_2)}.\nonumber
\end{eqnarray}
Expanding $G_1(z)$ and $G_2(z)$, given in  (\ref{generating_function_2}), 
in power series of $z$, one finally is led to  
\begin{eqnarray}
&&\hspace*{-0.5cm}q_{n,1}={\eta_2\xi_3\over \mu_1(1-\xi_3)(\mu_2-\lambda_2\xi_3)}\;{ \eta_1(\mu_2-\lambda_2)+ \eta_2 (\mu_1-\lambda_1)\over \eta_1+\eta_2}\nonumber\\
&&\hspace*{0.5cm}\times {1\over(\xi_1\xi_2)^n}\Bigl\{ {\xi_1^{n+1}-\xi_2^{n+1}\over \xi_1-\xi_2}
-{\mu_1\over\lambda_1}\;{\xi_1^n-\xi_2^n\over \xi_1-\xi_2}\Bigr\},
\qquad n\in\mathbb N_0,\nonumber\\
&&\label{equil_probabilities}\\
&&\hspace*{-0.5cm}q_{n,2}={\eta_1\xi_3\over \mu_2(1-\xi_3)(\mu_1-\lambda_1\xi_3)}\;{ \eta_1(\mu_2-\lambda_2)+ \eta_2 (\mu_1-\lambda_1)\over \eta_1+\eta_2}\nonumber\\
&&\hspace*{0.5cm}\times {1\over(\xi_1\xi_2)^n}\Bigl\{ {\xi_1^{n+1}-\xi_2^{n+1}\over \xi_1-\xi_2}
-{\mu_2\over\lambda_2}\;{\xi_1^n-\xi_2^n\over \xi_1-\xi_2}\Bigr\},
\qquad n\in\mathbb N_0.\nonumber
\end{eqnarray}
From (\ref{equil_probabilities}) one obtains immediately (\ref{mixture_discrete_environments}).
\hfill\fine
\end{proof}
We note that if $\eta_1(\mu_2-\lambda_2)+ \eta_2 (\mu_1-\lambda_1)>0$, the steady-state probabilities 
do not depend on the initial conditions  (\ref{initial_condition}), i.e. on the probability $p$.
\par
By virtue of (\ref{prop2_roots}), from  (\ref{generating_function_2}) one obtains 
(cf.\ also Eqs.\ (17) of \cite{YeNa1971}): 
\begin{equation}
 \mathbb{P}(\mathscr{E}=i)\equiv G_i(1)
 =\sum_{n=0}^{+\infty}q_{n,i}={\eta_{3-i}\over \eta_1+\eta_2},
\qquad 
i=1,2.
\label{eq:probE}
\end{equation}
From Proposition~\ref{prop:ssprob}, we have that  $\mathbb{P}(N=n|\mathscr{E}=1)$ and 
$\mathbb{P}(N=n|\mathscr{E}=2)$ $(n\in\mathbb{N}_0)$
are  both generalized mixtures of two geometric probability distributions of parameters 
$1/\xi_1$ and $1/\xi_2$, respectively (see Navarro \cite{Navarro2016} for details on generalized mixtures). 
\par
Making use of Proposition~\ref{prop:ssprob} and of (\ref{eq:probE}),  
we determine the conditional means in a straightforward manner:
\begin{equation}
 \mathbb{E}[N|\mathscr{E}=i]=\sum_{n=1}^{+\infty}n {q_{n,i}\over\mathbb{P}(\mathscr{E}=i)}=
 {A_i\over \xi_1-1}+{1-A_i\over \xi_2-1},\qquad i=1,2.
 \label{expectations_discrete_environments}
\end{equation}
%
%
\begin{corollary}\label{corollary1}
Under the assumptions of Proposition~\ref{prop:ssprob}, for $n\in\mathbb{N}_0$ one obtains 
the steady-state probabilities of $N$: 
\begin{equation}
q_n=q_{n,1}+q_{n,2}
={\eta_2A_1+\eta_1A_2\over\eta_1+\eta_2}\mathbb{P}(V_1=n)+\Bigl[1-{\eta_2A_1+\eta_1A_2\over\eta_1+\eta_2}\Bigr]\mathbb{P}(V_2=n),
\label{mixture_discrete_system}
\end{equation}
where $A_1$ and $A_2$ are provided in (\ref{coef_mixture_discrete}). 
\end{corollary}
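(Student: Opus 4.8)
The plan is to derive Corollary~\ref{corollary1} directly from Proposition~\ref{prop:ssprob} by summing the two joint probabilities $q_{n,1}$ and $q_{n,2}$ given in~(\ref{mixture_discrete_environments}). First I would write
\begin{equation*}
q_n=q_{n,1}+q_{n,2}={\eta_2\over\eta_1+\eta_2}\bigl[A_1\,\mathbb{P}(V_1=n)+(1-A_1)\,\mathbb{P}(V_2=n)\bigr]+{\eta_1\over\eta_1+\eta_2}\bigl[A_2\,\mathbb{P}(V_1=n)+(1-A_2)\,\mathbb{P}(V_2=n)\bigr],
\end{equation*}
since the prefactor $\eta_{3-i}/(\eta_1+\eta_2)$ in~(\ref{mixture_discrete_environments}) equals $\eta_2/(\eta_1+\eta_2)$ for $i=1$ and $\eta_1/(\eta_1+\eta_2)$ for $i=2$.

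Next I would collect the coefficients of $\mathbb{P}(V_1=n)$ and of $\mathbb{P}(V_2=n)$ separately. The coefficient of $\mathbb{P}(V_1=n)$ is $(\eta_2 A_1+\eta_1 A_2)/(\eta_1+\eta_2)$, which is exactly the weight appearing in~(\ref{mixture_discrete_system}). The coefficient of $\mathbb{P}(V_2=n)$ is $[\eta_2(1-A_1)+\eta_1(1-A_2)]/(\eta_1+\eta_2)=[(\eta_1+\eta_2)-(\eta_2 A_1+\eta_1 A_2)]/(\eta_1+\eta_2)=1-(\eta_2 A_1+\eta_1 A_2)/(\eta_1+\eta_2)$, which is the complementary weight in~(\ref{mixture_discrete_system}). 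Substituting these two expressions yields precisely~(\ref{mixture_discrete_system}).

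There is essentially no obstacle here: the computation is a one-line regrouping of a convex-type combination, and the only thing to note is that the two weights in~(\ref{mixture_discrete_system}) sum to $1$ by construction, so that $q_n$ is again a generalized mixture of the two geometric distributions $V_1$ and $V_2$ (with possibly negative weights, as in Proposition~\ref{prop:ssprob}). One could also remark, as a sanity check, that summing~(\ref{mixture_discrete_system}) over $n\in\mathbb{N}_0$ gives $1$ because $\sum_n\mathbb{P}(V_i=n)=1$ for $i=1,2$, consistently with~(\ref{normalization_condition}) and~(\ref{eq:probE}); and that the mixing weight can equivalently be written in terms of the conditional means via~(\ref{expectations_discrete_environments}) if desired. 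Thus the corollary follows immediately.
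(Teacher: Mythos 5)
Your proposal is correct and is exactly the argument the paper intends: the corollary follows by summing the two expressions in (\ref{mixture_discrete_environments}) over $i=1,2$ and regrouping the coefficients of $\mathbb{P}(V_1=n)$ and $\mathbb{P}(V_2=n)$, which is why the paper states it without a separate proof. Nothing is missing.
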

Eq.~(\ref{mixture_discrete_system}) shows that also $q_n$ is a generalized mixture of two geometric probability distributions of 
parameters $1/\xi_1$ and $1/\xi_2$, respectively, 
so that
\begin{equation}
\mathbb{E}(N)={\eta_2A_1+\eta_1A_2\over\eta_1+\eta_2}{1\over \xi_1-1}+\Bigl[1-{\eta_2A_1+\eta_1A_2\over\eta_1+\eta_2}\Bigr]{1\over\xi_2-1}\cdot
 \label{expectations_discrete_system}
\end{equation}
This result is in agreement with Eq.\ (33) of \cite{YeNa1971}. 
\par
Figure~\ref{fig2} shows the steady-state probabilities $q_{n,1},q_{n,2}$ (on the left) and 
$q_n=q_{n,1}+q_{n,2}$ (on the right), obtained via Proposition \ref{prop:ssprob} and Corollary~\ref{corollary1}, for 
$\lambda_1=1$, $\mu_1=0.5$, $\lambda_2=1$, $\mu_2=2$, $\eta_1 = 0.1$ and $\eta_2 = 0.08$. 
The roots of polynomial  (\ref{third_degree_polynomial}) can be evaluated by means of 
MATHEMATICA$^{\footnotesize{\rm \textregistered}}$, so that  
$\xi_1=2.16716$, $\xi_2=1.08919$, $\xi_3=0.423647$.
%
\begin{figure}[t]  
\centering
\includegraphics[scale=0.6]{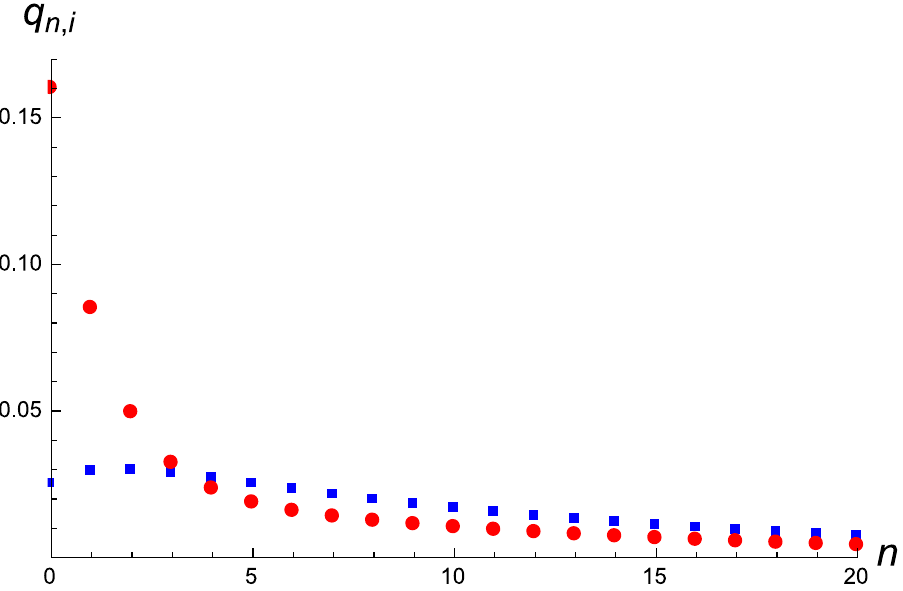}
\hspace*{4mm}
\includegraphics[scale=0.6]{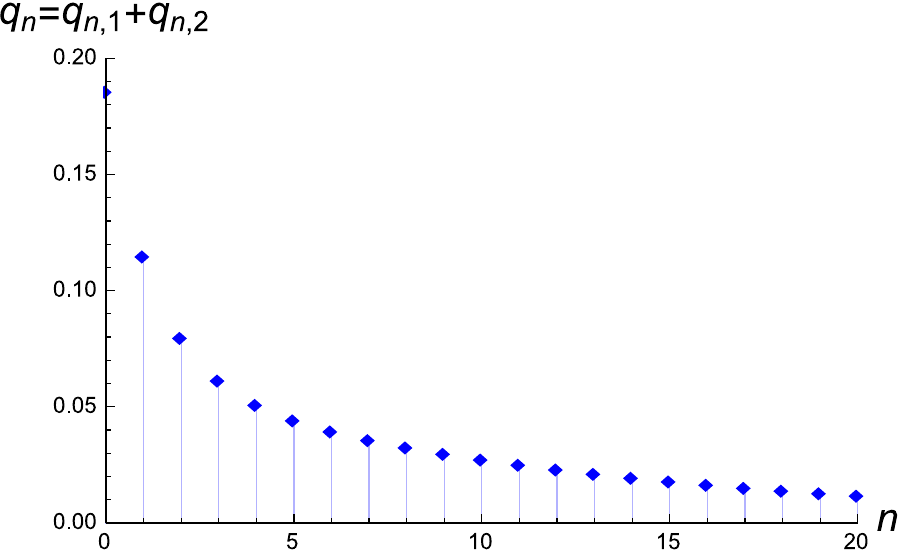}\\
\caption{Plots of probabilities $q_{n,1}$ (square) and $q_{n,2}$ (circle), on the left, and 
$q_n=q_{n,1}+q_{n,2}$, on the right, for  $\lambda_1=1$, $\mu_1=0.5$, $\lambda_2=1$, 
$\mu_2=2$, $\eta_1 = 0.1$ and $\eta_2 = 0.08$.}
\label{fig2}
\end{figure}
%
\begin{figure}[t]  
\centering
\includegraphics[scale=0.6]{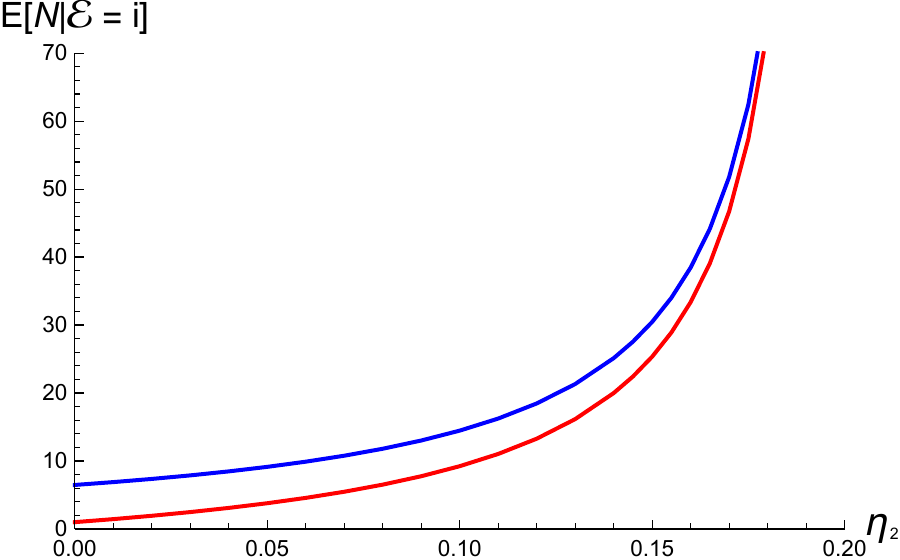}
\hspace*{4mm}
\includegraphics[scale=0.6]{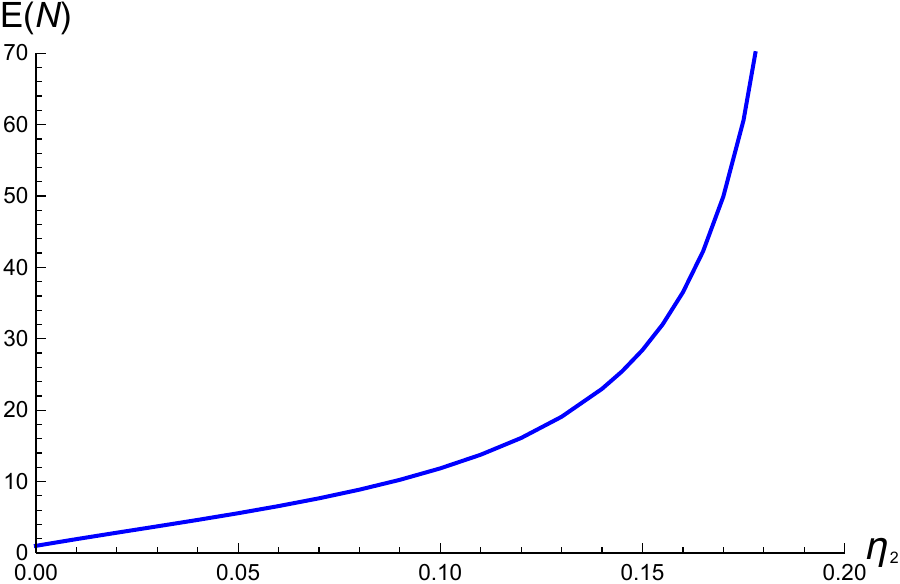}\\
\caption{For $\lambda_1=1$, $\mu_1=0.5$, $\lambda_2=1$, $\mu_2=2$, $\eta_1 = 0.1$ and $0\leq\eta_2 <0.2$
the  conditional means $\mathbb{E}[N|\mathscr{E}=i]$, given in  (\ref{expectations_discrete_environments}), are plotted on the left
for $i=1$ (top) and $i=2$ (bottom), whereas the mean $\mathbb{E}(N)$ is plotted on the right.
}
\label{Figure3}
\end{figure} 
\par
Figure~\ref{Figure3} gives, on the left, a plot of the conditional means, obtained in (\ref{expectations_discrete_environments}),  
for a suitable choice of the parameters, showing 
that $\mathbb{E}[N|\mathscr{E}=i]$ is increasing in $\eta_2$, for $i=1,2$. 
The  mean $\mathbb{E}(N)$, obtained via (\ref{expectations_discrete_system}),  
is plotted as function of $\eta_2$ on the right of Figure~\ref{Figure3}.
%
\begin{figure}[t]  
\centering
\includegraphics[scale=0.6]{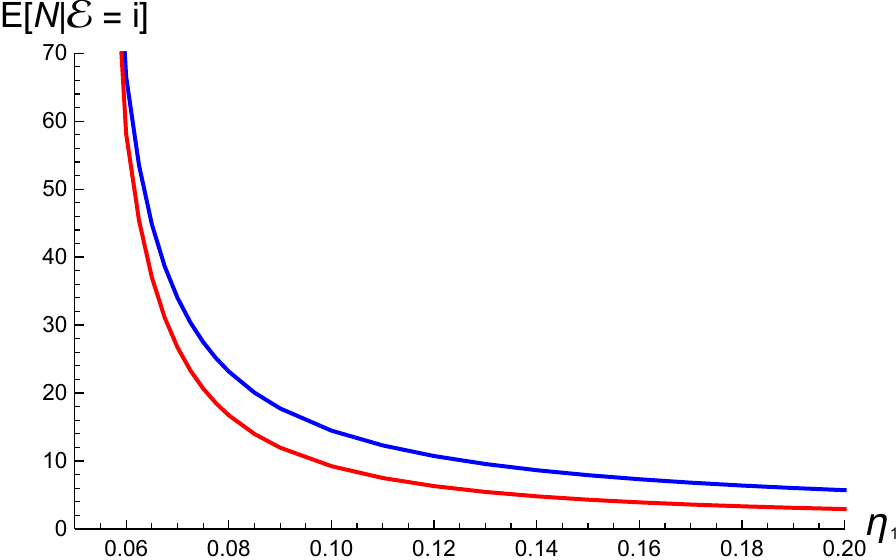}
\hspace*{4mm}
\includegraphics[scale=0.6]{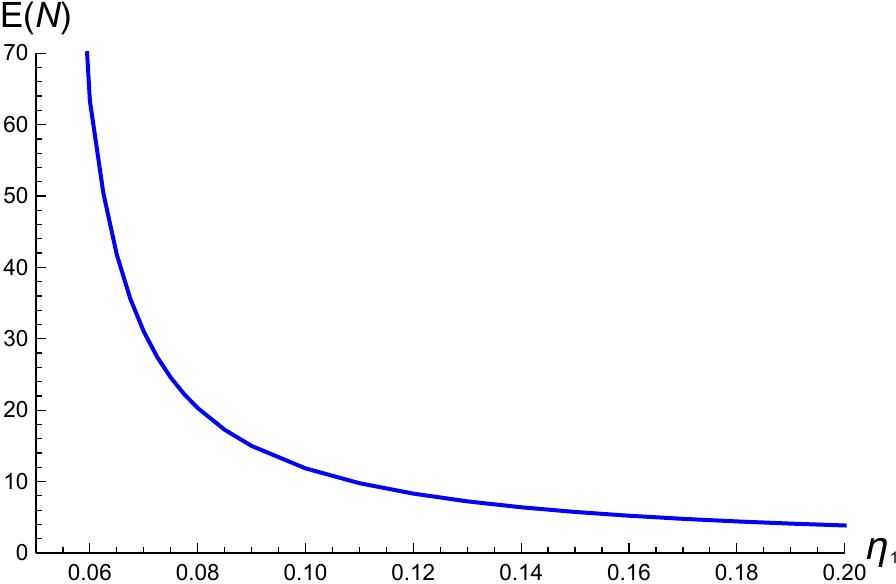}\\
\caption{For $\lambda_1=1$, $\mu_1=0.5$, $\lambda_2=1$, $\mu_2=2$, $\eta_2 = 0.1$ and $\eta_1 >0.05$
the  conditional means $\mathbb{E}[N|\mathscr{E}=i]$, given in  (\ref{expectations_discrete_environments}), are plotted on the left
for $i=1$ (top) and $i=2$ (bottom), whereas the mean $\mathbb{E}(N)$ is plotted on the right.
}
\label{Figure4}
\end{figure} 
\par
Similarly, on the left of Figure~\ref{Figure4} are plotted  the conditional means  for a suitable choice of the parameters, 
showing that $\mathbb{E}[N|\mathscr{E}=i]$ is decreasing in $\eta_1$, for $i=1,2$. 
The  mean $\mathbb{E}(N)$  is plotted as function of $\eta_1$ on the right of Figure~\ref{Figure4}.
\par
We remark that in the cases considered in Figures~\ref{Figure3} and \ref{Figure4}, the parameters satisfy the 
condition $\eta_1(\mu_2-\lambda_2)+ \eta_2 (\mu_1-\lambda_1)>0$, and thus the joint 
steady-state probabilities of ${\bf N}$ there exist due to Proposition \ref{prop:ssprob}. 
Moreover, in the cases considered in Figures~\ref{Figure3} and \ref{Figure4}, one has $\lambda_1>\mu_1$ and $\lambda_2<\mu_2$, so that in 
absence of switching  the queue should not admit a steady-state regime in the first environment, 
whereas a steady-state regime should hold in the second environment. This example shows that the switching 
mechanism is useful to obtain stationarity, in the sense that the alternating $M/M/1$ 
queue may admit a steady-state regime even if one of the alternating environments does not. 
\par
Making use of Proposition~\ref{prop:ssprob} and Eq.\ (\ref{eq:probE}) it is easy to determine 
the (Shannon)  entropies 
\begin{eqnarray}
&& H[N|\mathscr{E}=i]=-\sum_{n=1}^{+\infty} \frac{q_{n,i}}{\mathbb{P}(\mathscr{E}=i)}
 \log  \Big[ \frac{q_{n,i}}{\mathbb{P}(\mathscr{E}=i)}\Big],\qquad i=1,2, 
 \label{eq:condentr}\\
 && H(N)=-\sum_{n=1}^{+\infty}q_{n} \log  q_n, \label{eq:entr}
\end{eqnarray}
where `$\log$' means natural logarithm. 
\begin{figure}[t]  
\centering
\includegraphics[scale=0.6]{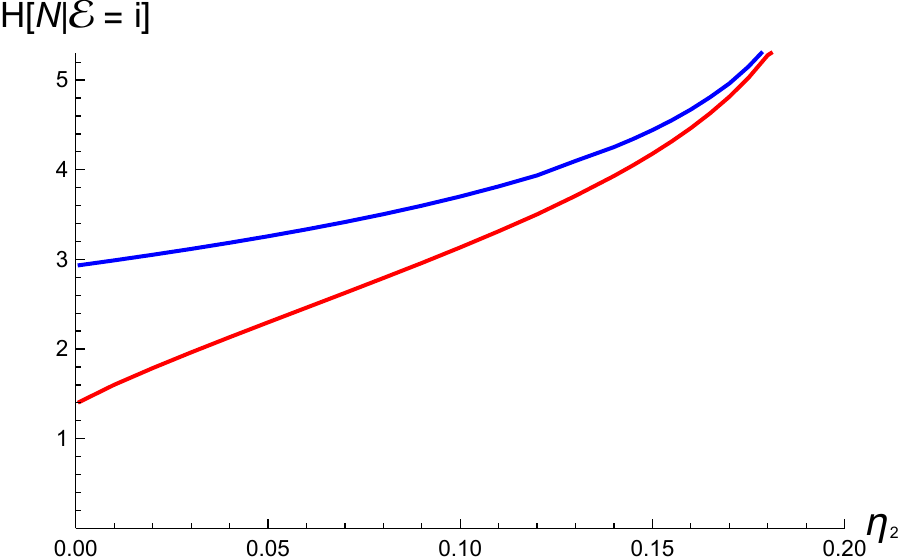}
\hspace*{4mm}
\includegraphics[scale=0.6]{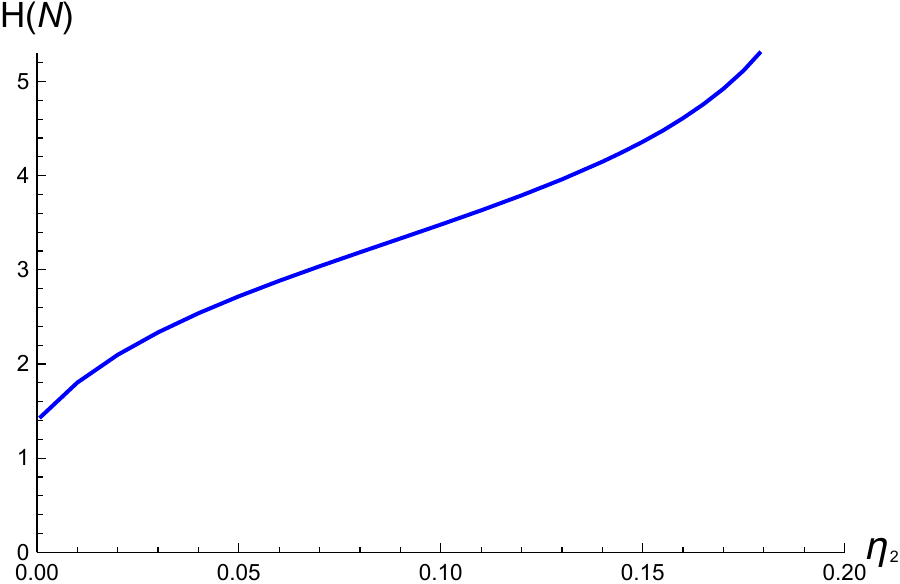}\\
\caption{On the left, the conditional entropy $H[N|\mathscr{E}=i]$, given in (\ref{eq:condentr}), is plotted 
with $\lambda_1=1$, $\mu_1=0.5$, $\lambda_2=1$, $\mu_2=2$, 
$\eta_1 = 0.1$ and $0\leq \eta_2 < 0.2$, for $i=1$ (top) and $i=2$ (bottom). 
On the right, the entropy $H(N)$, given in (\ref{eq:entr}),  is plotted for the same choices of parameters.}
\label{Figure5}
\end{figure} 
The conditional entropy (\ref{eq:condentr})  is a measure of interest in queueing, since it gives the average amount of information 
that is gained when the steady-state number of customers $N$ in the queue is observed 
given that the system is in environment $\mathscr{E}=i$. 
In Figure~\ref{Figure5} we plot the conditional entropy (\ref{eq:condentr}) (on the left)  and the entropy (\ref{eq:entr}) (on the right)  
 for the same case of Figure~\ref{Figure3}, showing that $H[N|\mathscr{E}=i]$ $(i=1,2)$ and $H[N)$  are increasing in $\eta_2$. 
 Furthermore,   for the same choices of Figure~\ref{Figure4}, in Figure~\ref{Figure6} the entropies $H[N|\mathscr{E}=i]$ $(i=1,2)$ and $H[N)$  
 are plotted, showing that are decreasing in $\eta_1$. 
%
\begin{figure}[t]  
\centering
\includegraphics[scale=0.6]{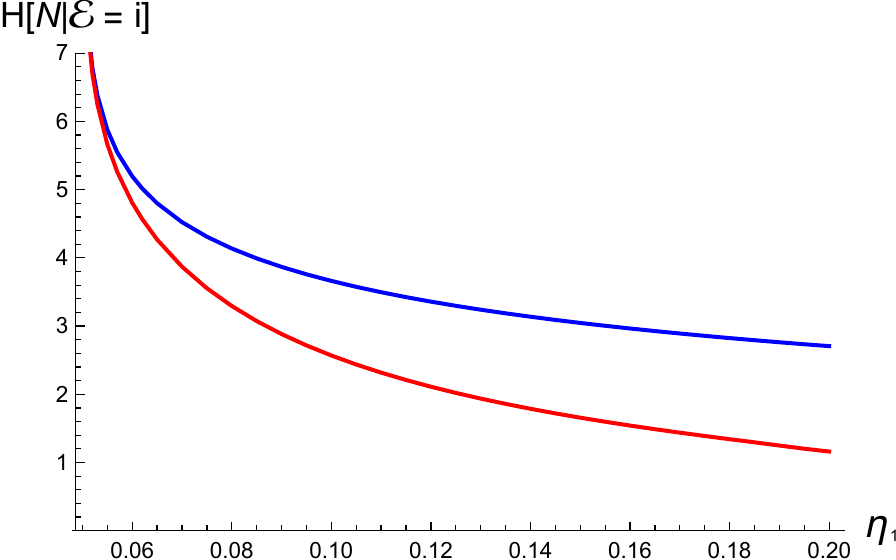}
\hspace*{4mm}
\includegraphics[scale=0.6]{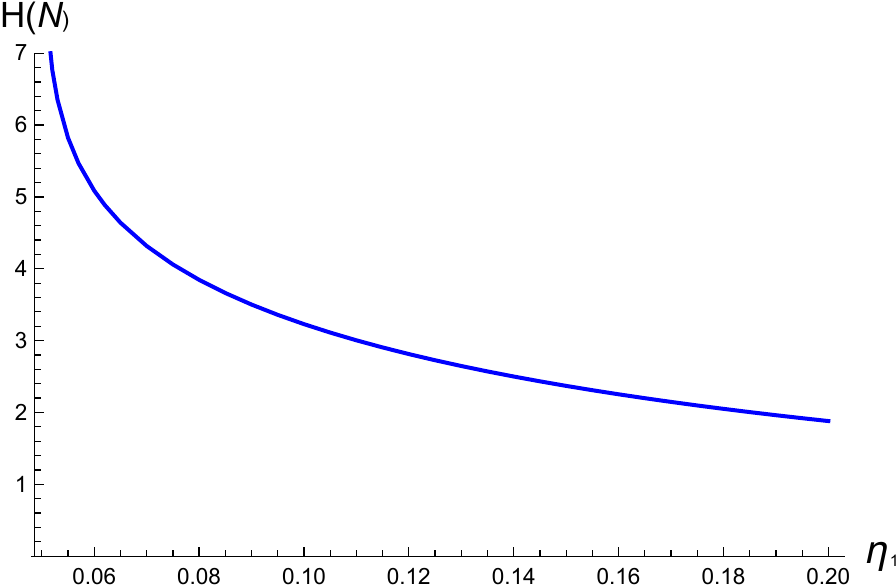}\\
\caption{On the left, $H[N|\mathscr{E}=i]$ is plotted 
with $\lambda_1=1$, $\mu_1=0.5$, $\lambda_2=1$, $\mu_2=2$, 
$\eta_2 = 0.1$ and $ \eta_1 > 0.05$, for $i=1$ (top) and $i=2$ (bottom). 
On the right, $H(N)$ is plotted for the same choices of parameters.}
\label{Figure6}
\end{figure} 
Moreover, recalling (\ref{mixture_discrete_environments}), (\ref{eq:probE}) and (\ref{mixture_discrete_system})   one can also define
the  following entropies 
\begin{eqnarray}
&&H[\mathscr{E}|N=n]=-\sum_{i=1}^2{q_{n,i}\over q_n}\log {q_{n,i}\over q_n},\qquad n\in\mathbb{N}_0,
\label{con_entr_n}\\
&&H(\mathscr{E})=-\sum_{i=1}^2{\eta_{3-i}\over \eta_1+\eta_2}\log {\eta_{3-i}\over \eta_1+\eta_2}
\label{entr_envir}
\end{eqnarray}
Note that $0\leq H[\mathscr{E}|N=n]\leq \log 2$ and $0\leq H(\mathscr{E})\leq \log 2$.
The conditional entropy (\ref{con_entr_n}) gives  the average amount of information on the environment when the  number of customers 
$N=n$ in the queue is observed, whereas the entropy (\ref{entr_envir}) gives  the average amount of information on the environment.
Note that, under the assumptions of Proposition~\ref{prop:ssprob}, from (\ref{con_entr_n}) one has
\begin{eqnarray}
&&\hspace*{-1.5cm}H_{\infty}=\lim_{n\to +\infty}H[\mathscr{E}|N=n]\nonumber\\
&&\hspace*{-0.8cm}=-\sum_{i=1}^2 {\eta_{3-i}\,(1-A_i)\over \eta_1\,(1-A_2)+\eta_2\,(1-A_1)}
\log {\eta_{3-i}\,(1-A_i)\over \eta_1\,(1-A_2)+\eta_2\,(1-A_1)},
\label{limit_entropy_cond}
\end{eqnarray}
with $A_i$ defined in (\ref{coef_mixture_discrete}). Therefore, the conditional average amount of information on the environment 
tends to  the value given in (\ref{limit_entropy_cond}) when the  number of customers increases. 
%
\begin{figure}[t]  
\centering
\subfigure[$\eta_1=0.1$]{ \includegraphics[width=0.42\textwidth,keepaspectratio]{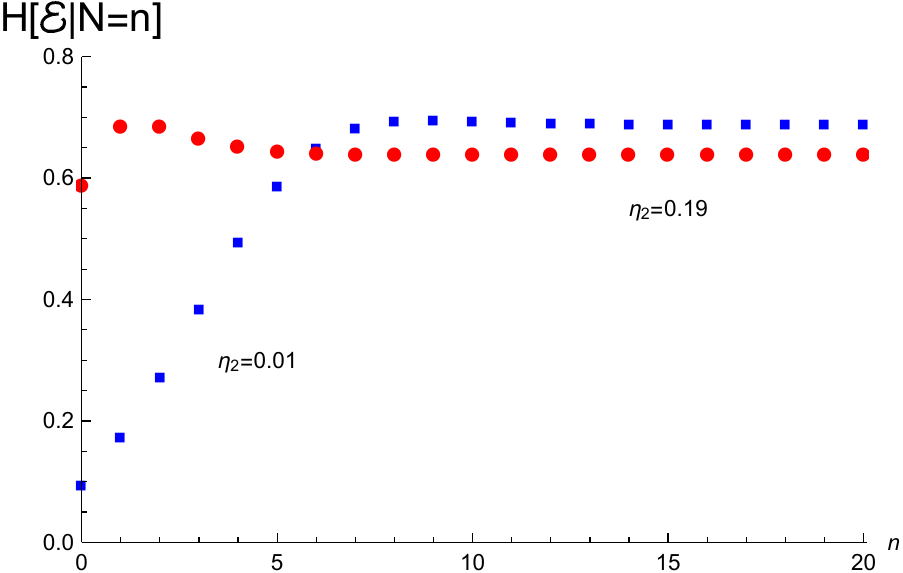}}
\hspace*{4mm}
\subfigure[$\eta_2=0.1$]{ \includegraphics[width=0.42\textwidth,keepaspectratio]{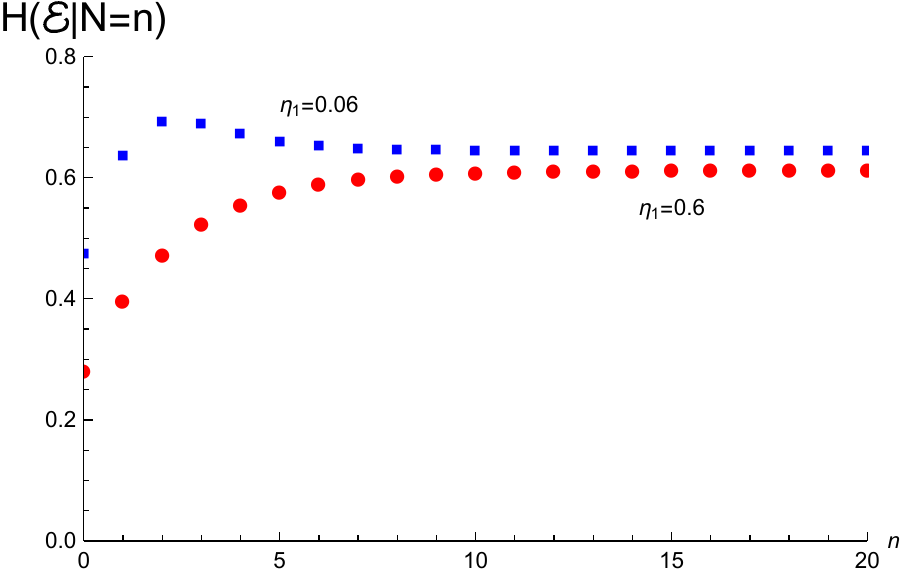}}\\
\caption{Plots of  $H[\mathscr{E}|N=n]$ for $\lambda_1=1.0,\mu_1=0.5,\lambda_2=1,\mu_2=2.0$ and $n=1,2,\ldots,20$, with different choices of $\eta_1,\eta_2$.}
\label{fig7}
\end{figure}
In Figure~\ref{fig7} the conditional entropies (\ref{con_entr_n}) are shown for some choices of parameters. In particular, in  Figure~\ref{fig7}(a) 
$H[\mathscr{E}|N=0]=0.0923799$, $H_{\infty}=0.686201$ and $H(\mathscr{E})=0.304636$ for $\eta_2=0.01$, whereas  when  $\eta_2=0.19$ one has $H[\mathscr{E}|N=0]=0.5898$,
$H_{\infty}=0.639399$ and $H(\mathscr{E})=0.644186$. Furthermore, in  Figure~\ref{fig7}(b) one has $H[\mathscr{E}|N=0]=0.473177$, $H_{\infty}=0.643289$ and $H(\mathscr{E})=0.661563$ 
for $\eta_1=0.06$, whereas  when  $\eta_1=0.6$ it results $H[\mathscr{E}|N=0]=0.281199$, $H_{\infty}=0.613038$ and $H(\mathscr{E})=0.410116$.   
From Figure~\ref{fig7}, we note that if $\eta_1>\eta_2$ then $H[\mathscr{E}|N=0]<H(\mathscr{E})<H_{\infty}$,  whereas   when $\eta_1<\eta_2$ it follows 
$H[\mathscr{E}|N=0]<H_{\infty}<H(\mathscr{E})$.
%
\section{Analysis of case $\eta_2=0$}\label{section3}
In the general case it is hard to determine the transient distribution of ${\bf N}(t)$. 
Hence, we limit ourselves to the analysis of the  system with  $\eta_2=0$ and with the initial state 
given in (\ref{eq:initcondit_N}). Figure~\ref{fig:5} shows the state diagram of ${\bf N}(t)$ in this special  case, where
only transitions from the first to the second environment are allowed. 
%
\begin{figure}[t]  
\centering
\includegraphics[scale=0.45]{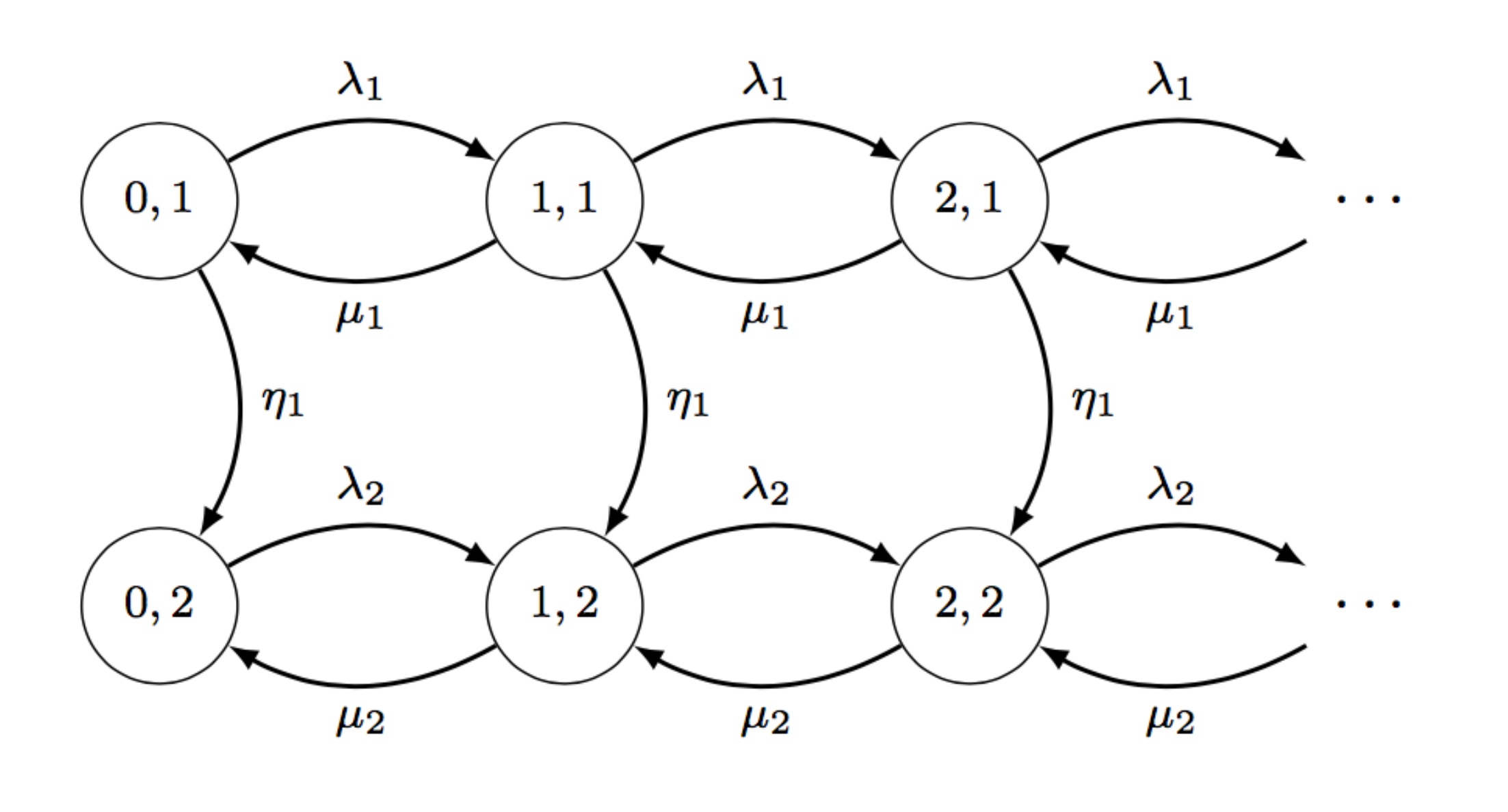}\\
\caption{The state diagram of the Markov chain ${\bf N}(t)$ when $\eta_2=0$.}
\label{fig:5}
\end{figure}
\par
We  remark that,  the case $\eta_1=0$ can be studied similarly by symmetry. 
\subsection{Transient probabilities}
%
Hereafter, we express the transient  probabilities (\ref{eq:transprobab}) in terms of 
the analogue probabilities $\widehat p_{j,n}^{(i)}(t)$ of two $M/M/1$ queueing systems 
$\widehat N^{(i)}(t)$, $t\geq 0$, $i=1,2$,  characterized by arrival rate $\lambda_i$ 
and service rate $\mu_i$. Note that, for $j,n\in\mathbb{N}_0$, $t\geq 0$ and $i=1,2$, we have 
(see, e.g.,  Zhang and Coyle \cite{ZC1991}, or Eq.~(32) of Giorno {\em et al.}\ \cite{GNS2014})
\begin{eqnarray}
\widehat p_{j,n}^{(i)}(t) & = & e^{-(\lambda_i+\mu_i)t}
\Big\{\Big(\frac{\lambda_i}{\mu_i}\Big)^{(n-j)/2} I_{n-j}(2t\sqrt{\lambda_i \mu_i})
\nonumber \\
& + & \Big(\frac{\lambda_i}{\mu_i}\Big)^{(n-j-1)/2} I_{n+j+1}(2t\sqrt{\lambda_i \mu_i})
\label{eq:trprobMM1} \\
& + &  \Big(1-\frac{\lambda_i}{\mu_i}\Big)
\Big(\frac{\lambda_i}{\mu_i}\Big)^{n}\sum_{k=n+j+2}^{\infty} 
\Big(\frac{\mu_i}{\lambda_i}\Big)^{k/2} I_{k}(2t\sqrt{\lambda_i \mu_i})\Big\},
\nonumber
\end{eqnarray}
where $I_\nu(z)$ denotes the modified Bessel function of the first kind. Moreover, making use of  
Eq.~(49), pag. 237, of Erd\'elyi {\em et al.}\ \cite{Erdelyi1}, for $n\in\mathbb{N}_0$, $t\geq 0$ and $i=1,2$, we have   
\begin{equation} 
 \widehat p_{0,n}^{(i)}(t) =  \frac{1}{\mu_i}\Big(\frac{\lambda_i}{\mu_i}\Big)^n \frac{1}{t} e^{-(\lambda_i+\mu_i)t} 
 \sum_{k=n+1}^{\infty} k \Big(\frac{\lambda_i}{\mu_i}\Big)^{k/2} I_k(2t\sqrt{\lambda_i\mu_i}).
 \label{eq:trprob0MM1}
\end{equation} 
\begin{proposition}
Let $\eta_2=0$. For all $t\geq 0$ and $j,n\in \mathbb{N}_0$, the transition probabilities of ${\bf N}(t)$ can be 
expressed as:
\begin{eqnarray}
 &&\hspace*{-1.0cm}p_{n,1} (t)  = p\, e^{-\eta_1 t} \widehat p_{j,n}^{(1)}(t),
 \label{eq:formulapn1} \\
 &&\hspace*{-1.0cm}p_{n,2} (t)  = (1-p)\, \widehat p_{j,n}^{(2)}(t)+ p\, \eta_1 \sum_{k=0}^{+\infty}
 \int_0^t e^{-\eta_1\, \tau}\, \widehat p_{j,k}^{(1)}(\tau)\,\widehat p_{k,n}^{(2)}(t-\tau) \;d\tau,
 \label{eq:formulapn2}
\end{eqnarray}
where $\widehat p_{j,n}^{(i)}(t)$ are provided in (\ref{eq:trprobMM1}) and (\ref{eq:trprob0MM1}).
\end{proposition}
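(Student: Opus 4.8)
The plan is to exploit the degenerate structure of the environment process when $\eta_2 = 0$: starting from $\mathscr{E}(0) = 1$ the environment stays equal to $1$ for an $\mathrm{Exp}(\eta_1)$-distributed time $T$ and then switches to $2$ forever, while starting from $\mathscr{E}(0) = 2$ it remains in $2$ for all times. Crucially, $T$ is independent of the queue-length dynamics, and while $\mathscr{E}(t) = i$ the process $N(t)$ evolves exactly as an $M/M/1$ queue with parameters $(\lambda_i,\mu_i)$. Thus the whole model is obtained by running $\widehat N^{(1)}$ up to the independent time $T$ and then letting the path continue as $\widehat N^{(2)}$ started from $\widehat N^{(1)}(T)$.

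First I would condition on the initial environment via (\ref{eq:initcondit_N}) and on the switching epoch $T$. For the state $(n,1)$ at time $t$ one needs $\mathscr{E}(0) = 1$ (probability $p$), $T > t$ (probability $e^{-\eta_1 t}$), and $\widehat N^{(1)}$ to go from $j$ to $n$ in time $t$; independence of $T$ from $\widehat N^{(1)}$ gives $p_{n,1}(t) = p\, e^{-\eta_1 t}\, \widehat p^{(1)}_{j,n}(t)$, which is (\ref{eq:formulapn1}). For the state $(n,2)$ at time $t$ there are two disjoint contributions: either $\mathscr{E}(0) = 2$, contributing $(1-p)\widehat p^{(2)}_{j,n}(t)$; or $\mathscr{E}(0) = 1$ and the switch occurs at some $\tau \le t$, in which case the queue sits at an intermediate level $k$ at time $\tau$ and then moves from $k$ to $n$ in the residual time $t - \tau$. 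Using the strong Markov property of $\mathbf N(t)$ at the stopping time $T$ (equivalently, the Markov property of $\widehat N^{(2)}$ together with the independence of $T$), conditioning on $\{T = \tau\}$ with density $\eta_1 e^{-\eta_1 \tau}$, summing over $k \in \mathbb N_0$ and integrating over $\tau \in (0,t)$ yields the convolution term in (\ref{eq:formulapn2}).

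Alternatively — and this is the route I would use to make the argument fully rigorous — one verifies directly that the right-hand sides of (\ref{eq:formulapn1}) and (\ref{eq:formulapn2}) solve the Kolmogorov system (\ref{equat_env1})--(\ref{equat_env2}) with $\eta_2 = 0$ and the initial data (\ref{initial_condition}). For (\ref{eq:formulapn1}) this is immediate: differentiating $p\, e^{-\eta_1 t}\widehat p^{(1)}_{j,n}(t)$ produces the $-\eta_1$ term plus $e^{-\eta_1 t}$ times the $M/M/1(\lambda_1,\mu_1)$ generator applied to $\widehat p^{(1)}_{j,\cdot}(t)$, which is exactly (\ref{equat_env1}) since $\eta_2 p_{0,2}(t) = 0$. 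For (\ref{eq:formulapn2}) one differentiates the convolution by Leibniz's rule: the boundary term at $\tau = t$ gives $p\,\eta_1 e^{-\eta_1 t}\sum_k \widehat p^{(1)}_{j,k}(t)\,\delta_{k,n} = \eta_1\, p_{n,1}(t)$, the source term in (\ref{equat_env2}), while differentiating $\widehat p^{(2)}_{k,n}(t-\tau)$ under the integral and using the $M/M/1(\lambda_2,\mu_2)$ forward equations in the terminal variable $n$ reproduces the $-(\lambda_2+\mu_2)$, $\mu_2 p_{n+1,2}$ and $\lambda_2 p_{n-1,2}$ terms (with the obvious modification at $n=0$); the free term $(1-p)\widehat p^{(2)}_{j,n}(t)$ already solves the same equations without source. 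The initial conditions hold because $\widehat p^{(i)}_{j,n}(0) = \delta_{n,j}$ and the convolution vanishes at $t=0$.

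The main obstacle is the bookkeeping in this second route: justifying the interchange of the infinite sum over $k$ with the $\tau$-integration and with the $t$-differentiation, and checking that the resulting function is a genuine probability distribution summing to one, so that one may invoke uniqueness of the solution of the Kolmogorov system for this non-explosive, bounded-rate chain. These points are handled by the uniform bounds $\sum_k \widehat p^{(1)}_{j,k}(\tau) \le 1$ and $\sum_n \widehat p^{(2)}_{k,n}(s) \le 1$ together with standard dominated-convergence arguments; the probabilistic derivation in the first route sidesteps this, at the cost of a careful invocation of the strong Markov property at the switching time. Finally, substituting the closed forms (\ref{eq:trprobMM1})--(\ref{eq:trprob0MM1}) for $\widehat p^{(i)}_{j,n}$ makes the expressions fully explicit.
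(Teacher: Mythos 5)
Your proof is correct and follows essentially the same two routes the paper itself uses: the paper's official proof is the one-line claim that the formulas follow from the Kolmogorov system (\ref{equat_env1})--(\ref{equat_env2}) with the initial conditions (\ref{initial_condition}), and the probabilistic derivation via conditioning on the single exponential switching epoch is exactly the interpretation the authors give immediately after their proof. Your version simply supplies the Leibniz-rule and interchange-of-limits details that the paper omits.
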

\begin{proof}
It follows from   (\ref{equat_env1}) and  (\ref{equat_env2}), recalling the initial conditions (\ref{initial_condition}).
\hfill\fine
\end{proof}
\par
In the case $\eta_2=0$, at most one switch can occur (from environment 1 to environment 2). Hence, 
Eq.~(\ref{eq:formulapn1}) is also obtainable  by noting that $p_{n,1} (t)$ can be viewed as 
the  probability that process ${\bf N}(t)$ is located at $(n,1)$ at time $t$, starting from $(j,1)$ 
at time $0$, with probability $p$, and that no `switches' occurred up to time  $t$. 
Similarly, resorting to  the total probability law, 
Eq.~(\ref{eq:formulapn2}) is recovered by taking into account that 
process ${\bf N}(t)$ is located at $(n,2)$ at time $t$ in two cases: 
\begin{description}
\item{-}  when starting from $(j,2)$ at time $0$, with probability $1-p$,
and performing a transition from $(j,2)$ to $(n,2)$ at time $t$,  
\item{-} when starting from $(j,1)$ at time $0$, with probability $p$, 
then performing a transition from  $(j,1)$ to $(k,1)$ at time $\tau$ 
(with $k\in \mathbb{N}_0$ and $0<\tau<t$), then switching from environment 1 to 
environment 2 at time $\tau$, and finally performing a transition 
from state $(k,2)$ to $(n,2)$ in the time interval $(\tau,t)$. 
\end{description}
%
\subsection{First-passage time problems}
We now analyze the first-passage time through zero state of the system when $\eta_2=0$. To this aim, first we define a new two-dimensional 
stochastic process $\{ \widetilde {\bf N}(t)=[\widetilde N(t),{\mathscr{E}}(t)],t\geq 0\}$ 
whose state diagram is given in Figure~\ref{fig:chain_busy}. This process is obtained from ${\bf N}(t)$ by removing all the transitions from $(0,i)$, $i=1,2$.
In this case only transitions from the first to the second environment are allowed. Moreover,  
$(0,1)$ and $(0,2)$ are  absorbing states for the process $\widetilde {\bf N}(t)$. 
%
%
\begin{figure}[t]  
\centering
\includegraphics[scale=0.45]{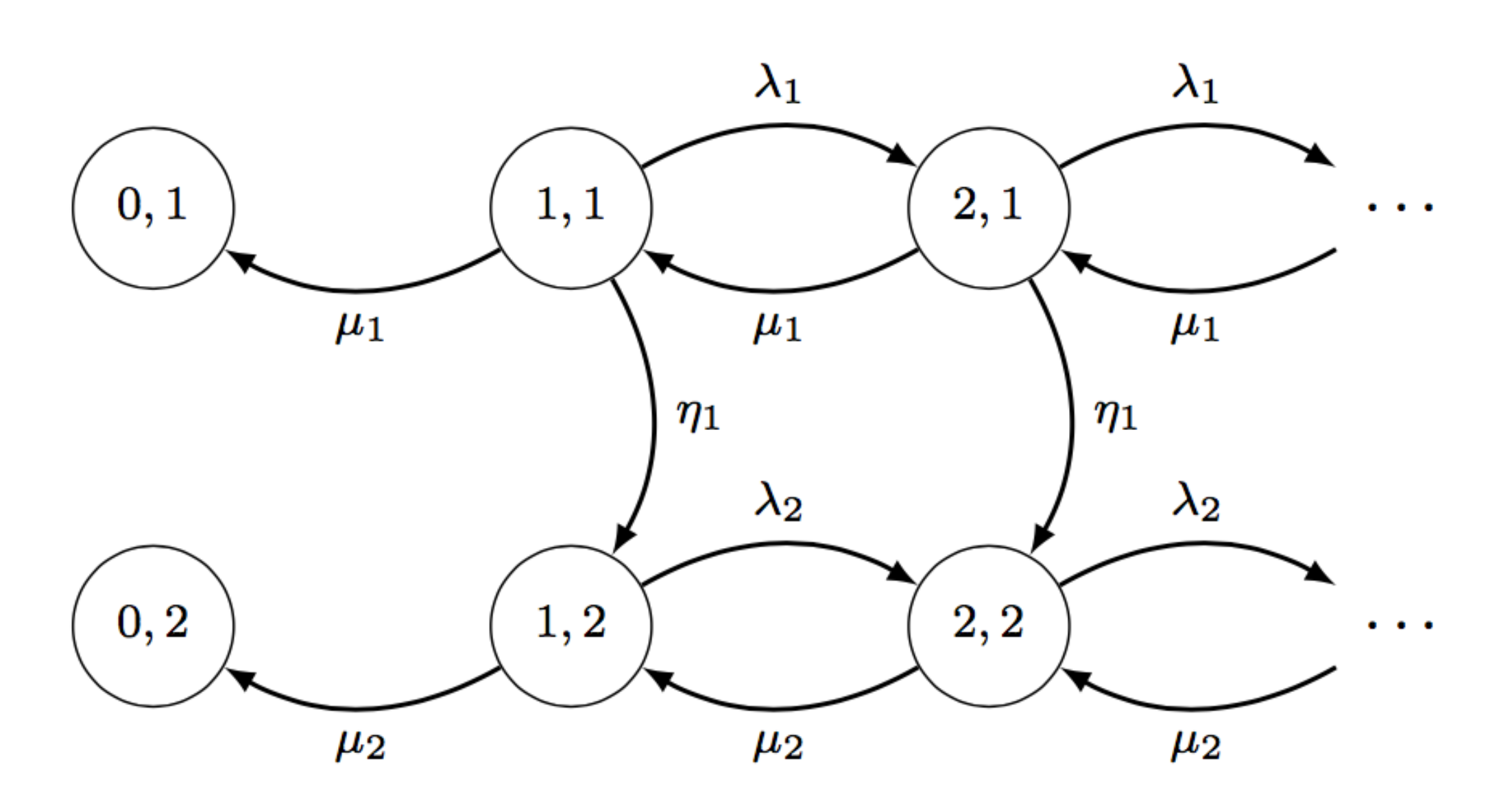}\\
\caption{The state diagram of the  birth-death process for the busy period.}
\label{fig:chain_busy}
\end{figure}
We denote by 
\begin{equation}
 \gamma_{n,i}(t)=\mathbb P[\widetilde {\bf N}(t)=(n,i)], 
 \qquad  n\in \mathbb{N}_0, 
 \quad i=1,2, \quad t\geq 0
 \label{absorb_prob_discr}
\end{equation}  
the state probabilities of the new process,  where 
\begin{equation}
\widetilde {\bf N}(0)=
\left\{\begin{array}{ll}
(j,1),&\;{\rm with\;probability}\; p,\\
(j,2),&\;{\rm with\;probability}\;1-p.
\end{array}\right.
 \label{eq:initcondit}
\end{equation}
Since $\eta_2=0$ the following equations hold:
\begin{eqnarray}
&&\hspace*{-0.5cm}{d \gamma_{0,1}(t)\over dt}= \mu_1\,\gamma_{1,1}(t),
\nonumber\\
&&\hspace*{-0.5cm} {d \gamma_{1,1}(t)\over dt}
=-(\lambda_1+\mu_1+\eta_1)\,\gamma_{1,1}(t)+\mu_1\,\gamma_{2,1}(t),
\label{equat_genv1}\\
&&\hspace*{-0.5cm} {d \gamma_{n,1}(t)\over dt}
=-(\lambda_1+\mu_1+\eta_1)\,\gamma_{n,1}(t)+\mu_1\,\gamma_{n+1,1}(t)+\lambda_1\,\gamma_{n-1,1}(t),
\nonumber\\
&& \hspace*{8cm} 
n=2,3,\ldots\nonumber
\end{eqnarray}
and
\begin{eqnarray}
&&\hspace*{-0.5cm} {d \gamma_{0,2}(t)\over dt}=\mu_2\,\gamma_{1,2}(t), \nonumber\\
&&\hspace*{-0.5cm}{d \gamma_{1,2}(t)\over dt}=-(\lambda_2+\mu_2)\,\gamma_{1,2}(t)
+\eta_1\,\gamma_{1,1}(t)+\mu_2\,\gamma_{2,2}(t), 
\label{equat_genv2}\\
&&\hspace*{-0.5cm}{d \gamma_{n,2}(t)\over dt}=-(\lambda_2+\mu_2)\,\gamma_{n,2}(t)+\eta_1\,\gamma_{n,1}(t)
+\mu_2\,\gamma_{n+1,2}(t)+\lambda_2\,\gamma_{n-1,2}(t),\nonumber\\
&& \hspace*{8cm} n=2,3,\ldots,\nonumber
\end{eqnarray}
with initial conditions
\begin{equation}
 \gamma_{n,1}(0)=p\,\delta_{n,j}, \qquad \gamma_{n,2}(0)=(1-p)\,\delta_{n,j} \qquad (0\leq p\leq 1).
 \label{eq:gammainit}
\end{equation}
\par
In order to obtain suitable relations for the state probabilities (\ref{absorb_prob_discr}), we 
recall that the transition probabilities avoiding state 0 for the $M/M/1$ queue 
with arrival rate $\lambda_i$ and service rate $\mu_i$, 
for $j,n\in \mathbb N$ and $t>0$ is (cf.\ Abate {\em et al.}\ \cite{AKW1991})
\begin{equation}
 \widehat \alpha_{j,n}^{(i)}(t)
 = e^{-(\lambda_i+\mu_i)t}\Big(\frac{\lambda_i}{\mu_i}\Big)^{(n-j)/2}
 \,\left[I_{n-j}(2t\sqrt{\lambda_i\mu_i})-I_{n+j}(2t\sqrt{\lambda_i\mu_i})\right]
  \label{MM1_abs_prob}
\end{equation}
and, due to relation $I_{j-1}(z)-I_{j+1}(z)=2j \,I_j(z)/z$ (cf.\ Eq.~8.486.1, p.\ 928 of \cite{GR2007}),  
\begin{equation}
 \widehat \alpha_{j,1}^{(i)}(t)
 ={j \over \mu_i\,t}\,e^{-(\lambda_i+\mu_i)t}\Big({\mu_i\over\lambda_i}\Big)^{j/2}
 I_j(2t\sqrt{\lambda_i\mu_i}).
 \label{MM1_abs_prob_1}
\end{equation}
%
\begin{proposition}\label{prop_gamma}
If $\eta_2=0$,  for $j\in \mathbb{N}$, $n\in \mathbb{N}_0$ and $t>0$, for the state probabilities (\ref{absorb_prob_discr}) 
one has:
\begin{eqnarray}
&&\hspace*{-1.0cm}\gamma_{n,1}(t)= p\, e^{-\eta_1 t} \,\widehat \alpha_{j,n}^{(1)}(t),
\label{eq:absprob_gen1}\\
&&\hspace*{-1.0cm}\gamma_{n,2}(t)=(1-p)\,  \widehat \alpha_{j,n}^{(2)}(t)
+  \eta_1 p \sum_{k=1}^{\infty}\int_0^t e^{-\eta_1 \tau} 
\,\widehat \alpha_{j,k}^{(1)}(\tau)\,\widehat \alpha_{k,n}^{(2)}(t-\tau)\; d\tau,
\label{eq:absprob_gen2}
\end{eqnarray}
where $\widehat \alpha_{j,k}^{(i)}(t)$ are given in  (\ref{MM1_abs_prob}) and (\ref{MM1_abs_prob_1}).
\end{proposition}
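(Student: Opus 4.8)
The structure mirrors the earlier Proposition for the unrestricted chain, so the plan is to reduce the system (\ref{equat_genv1})--(\ref{equat_genv2}) to two decoupled $M/M/1$-type problems with an absorbing barrier at $0$, linked by the one-way switching term $\eta_1\gamma_{n,1}(t)$. First I would treat the first environment: since $\eta_2=0$, the equations (\ref{equat_genv1}) for $\gamma_{n,1}(t)$ form a closed system — they involve only $\gamma_{\cdot,1}$. The substitution $\gamma_{n,1}(t)=e^{-\eta_1 t}g_{n,1}(t)$ removes the constant $\eta_1$ from the rates, turning (\ref{equat_genv1}) into exactly the Kolmogorov equations for the $M/M/1$ queue with parameters $(\lambda_1,\mu_1)$ and an absorbing state at $0$, started at $j$ with mass $p$. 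By uniqueness of the solution to this linear system with the initial condition (\ref{eq:gammainit}), $g_{n,1}(t)=p\,\widehat\alpha_{j,n}^{(1)}(t)$, which gives (\ref{eq:absprob_gen1}); for $n=1$ one uses the Bessel identity quoted before (\ref{MM1_abs_prob_1}). Note $\gamma_{0,1}(t)$ is recovered by integrating $d\gamma_{0,1}/dt=\mu_1\gamma_{1,1}(t)$, consistent with (\ref{eq:absprob_gen1}) at $n=0$ (where $\widehat\alpha_{j,0}^{(1)}(t)=0$ formally, the accumulated absorption mass being handled separately).

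Next I would attack the second environment. The equations (\ref{equat_genv2}) for $\gamma_{n,2}(t)$ are those of an $M/M/1$ queue with parameters $(\lambda_2,\mu_2)$ and absorbing state $0$, but with an added inhomogeneous source term $\eta_1\gamma_{n,1}(t)$ feeding each state $n\ge 1$. The natural approach is Duhamel's principle / variation of constants: the solution is the free evolution of the initial mass $(1-p)\delta_{n,j}$ under the $(\lambda_2,\mu_2)$-absorbing dynamics, namely $(1-p)\widehat\alpha_{j,n}^{(2)}(t)$, plus the convolution of the source $\eta_1\gamma_{k,1}(\tau)$ against the same transition kernel $\widehat\alpha_{k,n}^{(2)}(t-\tau)$, summed over the entry state $k$ and integrated over the switching time $\tau$. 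Substituting the already-established expression for $\gamma_{k,1}(\tau)=p\,e^{-\eta_1\tau}\widehat\alpha_{j,k}^{(1)}(\tau)$ yields precisely (\ref{eq:absprob_gen2}). To make this rigorous I would verify directly that the proposed right-hand side of (\ref{eq:absprob_gen2}) satisfies (\ref{equat_genv2}) and the initial condition: differentiate under the integral sign, use that $\widehat\alpha_{k,n}^{(2)}(\cdot)$ satisfies the homogeneous absorbing $M/M/1$ equations in its second argument, and pick up the boundary term at $\tau=t$ from the Leibniz rule, which is $\eta_1\sum_k \gamma_{k,1}(t)\widehat\alpha_{k,n}^{(2)}(0)=\eta_1\gamma_{n,1}(t)$ since $\widehat\alpha_{k,n}^{(2)}(0)=\delta_{k,n}$. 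This reproduces exactly the inhomogeneous term, and the $t\to0$ limit gives $(1-p)\delta_{n,j}$ as required.

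The probabilistic reading — already used after the previous Proposition — makes the formula transparent and can serve as the proof's narrative: avoiding state $0$ in environment $1$ up to time $t$ with no switch contributes the first term's analogue; a switch at time $\tau$ from state $(k,1)$ to $(k,2)$ (occurring with rate $\eta_1$, hence the factor $\eta_1\,d\tau$, after surviving in environment $1$ with probability density $p\,e^{-\eta_1\tau}\widehat\alpha_{j,k}^{(1)}(\tau)$ for reaching $k$ without absorption) followed by an absorbing-free excursion $(k,2)\to(n,2)$ over $(\tau,t)$ with probability $\widehat\alpha_{k,n}^{(2)}(t-\tau)$ gives the convolution term; starting directly in $(j,2)$ with probability $1-p$ and never hitting $0$ gives the leading $(1-p)\widehat\alpha_{j,n}^{(2)}(t)$ term. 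The main obstacle is purely technical: justifying the interchange of differentiation, the infinite sum over $k$, and the $\tau$-integral. This is handled by the exponential decay $e^{-(\lambda_i+\mu_i)t}$ together with the standard bound $I_\nu(z)\le (z/2)^\nu e^{z}/\nu!$ on the modified Bessel functions, which gives absolute and uniform convergence on compact time intervals and legitimizes all the manipulations; everything else is a direct substitution into the linear ODE system, whose solution is unique.
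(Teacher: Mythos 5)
Your proposal is correct and follows essentially the same route as the paper, whose proof is simply the remark that the formulas follow from the systems (\ref{equat_genv1})--(\ref{equat_genv2}) with the initial conditions (\ref{eq:gammainit}); your decoupling of the first-environment equations via the factor $e^{-\eta_1 t}$ and the Duhamel/variation-of-constants treatment of the source term $\eta_1\gamma_{n,1}(t)$ is exactly the intended argument, and your probabilistic reading matches the interpretation the paper gives immediately after the proposition. The only caveat is a quibble with the statement rather than the proof: at $n=0$ formula (\ref{eq:absprob_gen1}) yields $0$ (since $I_{-j}=I_j$), whereas $\gamma_{0,1}(t)=\int_0^t\mu_1\gamma_{1,1}(s)\,ds>0$ is the accumulated absorbed mass, so the identities really hold for $n\in\mathbb{N}$, which is all that is used later in (\ref{abs_first_passage}).
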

\begin{proof}
It follows from   (\ref{equat_genv1}) and (\ref{equat_genv2}), recalling the initial conditions (\ref{eq:gammainit}).\par\hfill\fine
\end{proof}
\par
The term in the right-hand-side of (\ref{eq:absprob_gen1}) can be interpreted as follows: starting from $(j,1)$ at time 0, 
with probability $p$, the process reaches $(n,1)$, with $n\in\mathbb{N}$, at time $t$ without crossing $(0,1)$ in the 
interval $(0,\tau)$, and  no switches occurred up to time  $t$.
Instead, the 2 terms in the right-hand-side of (\ref{eq:absprob_gen2}) have the following meaning: 
\begin{description}
\item{-} starting from $(j,2)$ at time 0, with probability $1-p$, the process reaches $(n,2)$ at time $t$, with 
$n\in\mathbb{N}$, at time $t$ without crossing $(0,2)$ in the interval $(0,\tau)$;
\item{-} starting from $(j,1)$ at time 0, with probability $p$, the process reaches $(k,1)$, with $k\in\mathbb{N}$, 
at time $\tau\in (0,t)$ without crossing $(0,1)$ in the interval $(0,\tau)$, then a switch occurs at  time  $t$, and 
starting from $(k,2)$ the process reaches $(n,2)$ at time $t$ without crossing $(0,2)$ in the interval $(\tau,t)$.
\end{description}
\par
For $j\in\mathbb{N}$, we consider the random variable 
$$
 T_j=\inf\{t>0:  {\bf N}(t)=(0,1)\;{\rm or}\;{\bf N}(t)=(0,2)\}, 
$$ 
where ${\bf N}(0)$ is given in (\ref{eq:initcondit_N}), with $j\in\mathbb{N}$. 
Note that $T_j$ denotes the  first-passage time (FPT) of the process ${\bf N}(t)$ 
through zero  starting from $(j,1)$ with probability $p$ 
and from $(j,2)$ with probability $1-p$, with $j\in \mathbb{N}$. Hence, $T_j$ is the first emptying time of the queue, 
with  the initial state specified in (\ref{eq:initcondit_N}). 
For $\eta_2=0$, the FPT of ${\bf N}(t)$ through $(0,1)$ or $(0,2)$ is identically 
distributed as the FPT of $\widetilde{\bf N}(t)$ through the same states. Therefore, the first passage through 
$(0,1)$ or $(0,2)$ for ${\bf N}(t)$ can be studied via the probabilities obtained in Proposition~\ref{prop_gamma}. 
Specifically, recalling (\ref{absorb_prob_discr}), for $j\in\mathbb{N}$ we have:
\begin{equation}
 \mathbb{P}(T_j<t)+\sum_{n=1}^{+\infty}\bigl[\gamma_{n,1}(t)+\gamma_{n,2}(t)\bigr]=1.
 \label{abs_first_passage}
\end{equation}
We focus our attention on the FPT probability density  
\begin{equation}
 b_j(t)={d\over dt}\, \mathbb{P}(T_j<t),\qquad t>0,\;j\in\mathbb{N}.
 \label{def_FPTdens_discr}
\end{equation}
Hereafter we show that such density can be expressed in terms of the first-passage-time density 
from state $j\in \mathbb N$ to state 0 of  the $M/M/1$ queues with rates $\lambda_i$ and $\mu_i$, 
$i=1,2$, given by (see, for instance, \cite{AKW1991}):
\begin{equation}
 \widehat g_{j,0}^{(i)} (t)=\frac{j}{t}\,e^{-(\lambda_i+\mu_i)t}
 \Big(\frac{\mu_i}{\lambda_i} \Big)^{j/2}\,I_j(2t\sqrt{\lambda_i\mu_i}),\qquad t>0.
 \label{FPT_density_MM1}
\end{equation}
\begin{proposition}
If $\eta_2=0$, for $t>0$ FPT probability density  (\ref{def_FPTdens_discr}) is expressed as: 
\begin{eqnarray}
&&b_j(t) = p\, e^{-\eta_1 t} \,\widehat g_{j,0}^{(1)}(t)+(1-p)\,  \widehat g_{j,0}^{(2)}(t)
\nonumber \\
&&\hspace*{1cm} +  \eta_1 p \sum_{k=1}^{\infty}\int_0^t e^{-\eta_1 \tau} 
\,\widehat \alpha_{j,k}^{(1)}(\tau)\,\widehat g_{k,0}^{(2)}(t-\tau)\; d\tau,\qquad j\in \mathbb{N},
\label{eq:BPdensity_gen}
\end{eqnarray}
where $\widehat g_{k,0}^{(i)}(t)$ is the FPT density given in (\ref{FPT_density_MM1}).  
\end{proposition}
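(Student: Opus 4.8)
The plan is to obtain $b_j(t)$ by differentiating the identity (\ref{abs_first_passage}), exactly as suggested by the definition (\ref{def_FPTdens_discr}), and then substituting the explicit expressions for $\gamma_{n,1}(t)$ and $\gamma_{n,2}(t)$ from Proposition~\ref{prop_gamma}. More directly, I would exploit the structure of $\widetilde{\bf N}(t)$: since $(0,1)$ and $(0,2)$ are the only absorbing states, and reaching $(0,i)$ from $(1,i)$ occurs at instantaneous rate $\mu_i$, one has the balance relation
\begin{equation*}
b_j(t)=\frac{d}{dt}\bigl[\gamma_{0,1}(t)+\gamma_{0,2}(t)\bigr]
=\mu_1\,\gamma_{1,1}(t)+\mu_2\,\gamma_{1,2}(t),
\end{equation*}
where the last equality is read off directly from the first equations of (\ref{equat_genv1}) and (\ref{equat_genv2}). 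So the proof reduces to inserting (\ref{eq:absprob_gen1}) and (\ref{eq:absprob_gen2}) evaluated at $n=1$.

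The next step is to recognize that the FPT density of the pure $M/M/1$ queue satisfies $\widehat g_{j,0}^{(i)}(t)=\mu_i\,\widehat\alpha_{j,1}^{(i)}(t)$: comparing (\ref{MM1_abs_prob_1}) with (\ref{FPT_density_MM1}) this is immediate, since both equal $\frac{j}{t}e^{-(\lambda_i+\mu_i)t}(\mu_i/\lambda_i)^{j/2}I_j(2t\sqrt{\lambda_i\mu_i})$. Using this, the term $\mu_1\gamma_{1,1}(t)=\mu_1\,p\,e^{-\eta_1 t}\widehat\alpha_{j,1}^{(1)}(t)=p\,e^{-\eta_1 t}\widehat g_{j,0}^{(1)}(t)$ gives the first summand of (\ref{eq:BPdensity_gen}). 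For $\mu_2\gamma_{1,2}(t)$, applying the same identity to the free term gives $(1-p)\widehat g_{j,0}^{(2)}(t)$, and applying it inside the convolution sum, where $\mu_2\,\widehat\alpha_{k,1}^{(2)}(t-\tau)=\widehat g_{k,0}^{(2)}(t-\tau)$, produces exactly the double-sum/integral term of (\ref{eq:BPdensity_gen}). Collecting the three pieces yields the claimed formula.

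The only point requiring a little care is the justification of the probabilistic/analytic bookkeeping: that differentiating (\ref{abs_first_passage}) term-by-term is legitimate (uniform convergence of the series of $\gamma_{n,i}$ and their derivatives on compact $t$-intervals, which follows from the exponential decay $e^{-(\lambda_i+\mu_i)t}$ and standard bounds on $I_\nu$), and that $\frac{d}{dt}[\gamma_{0,1}(t)+\gamma_{0,2}(t)]$ indeed equals the FPT density — i.e.\ that the absorbing states are reached exactly when the queue first empties, which is built into the construction of $\widetilde{\bf N}(t)$ and into the equality of the FPT laws of ${\bf N}(t)$ and $\widetilde{\bf N}(t)$ already noted before (\ref{abs_first_passage}). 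The interchange of summation and integration in the convolution term is handled the same way.

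I expect the main obstacle, such as it is, to be purely notational: keeping the index ranges straight (the sum over $k$ runs over $\mathbb{N}$, since state $0$ is excluded in the $\widehat\alpha$-dynamics) and confirming the two Bessel-function identities $\widehat g_{j,0}^{(i)}(t)=\mu_i\widehat\alpha_{j,1}^{(i)}(t)$ that make the three terms line up. There is no genuine analytic difficulty here once Proposition~\ref{prop_gamma} is in hand; the result is essentially a corollary of it together with the boundary equations (\ref{equat_genv1})--(\ref{equat_genv2}).
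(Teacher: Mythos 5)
Your proposal is correct and follows essentially the same route as the paper: reduce to the balance relation $b_j(t)=\mu_1\gamma_{1,1}(t)+\mu_2\gamma_{1,2}(t)$ (the paper obtains it by differentiating $\sum_{n\geq 1}[\gamma_{n,1}(t)+\gamma_{n,2}(t)]$ via (\ref{equat_genv1})--(\ref{equat_genv2}), which is the same computation as your $\frac{d}{dt}[\gamma_{0,1}+\gamma_{0,2}]$ viewed from the complementary side), then substitute Proposition~\ref{prop_gamma} at $n=1$ and use the identity $\mu_i\,\widehat\alpha_{j,1}^{(i)}(t)=\widehat g_{j,0}^{(i)}(t)$. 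No substantive difference.
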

\begin{proof} Making use of (\ref{equat_genv1}) and (\ref{equat_genv2}) one has
$$
{d\over dt} \sum_{n=1}^{+\infty}\gamma_{n,i}(t)
 =-\mu_i\gamma_{1,i}(t)+(-1)^i\eta_1\sum_{n=1}^{+\infty}\gamma_{n,1}(t),\qquad i=1,2,
$$
so that from (\ref{abs_first_passage}) and (\ref{def_FPTdens_discr}) we obtain 
\begin{equation}
 b_j(t)=\mu_1\,\gamma_{1,1}(t)+\mu_2\,\gamma_{1,2}(t),  \qquad t>0,\; j\in \mathbb{N}.
 \label{density_FPT_1}
\end{equation}
Due to Proposition~\ref{prop_gamma}, from (\ref{density_FPT_1}) for $j\in \mathbb{N}$
it follows:
\begin{eqnarray}
&& b_j(t)=\mu_1\,p\, e^{-\eta_1 t} \,\widehat \alpha_{j,1}^{(1)}(t)
+\mu_2\,(1-p)\,  \widehat \alpha_{j,1}^{(2)}(t)\nonumber\\
&&\hspace*{1.2cm}+  \mu_2\,\eta_1 p \sum_{k=1}^{\infty}\int_0^t e^{-\eta_1 \tau} 
\,\widehat \alpha_{j,k}^{(1)}(\tau)\,\widehat \alpha_{k,1}^{(2)}(t-\tau)\; d\tau,\qquad t>0.
\label{density_FPT_2}
\end{eqnarray}
By virtue of (\ref{MM1_abs_prob_1}) and (\ref{FPT_density_MM1})   one has $\mu_i\,\widehat \alpha_{j,1}^{(i)}(t)= \widehat g_{j,0}^{(i)}(t)$ for $i=1,2$, 
so that (\ref{eq:BPdensity_gen}) immediately follows from (\ref{density_FPT_2}).\hfill\fine
\end{proof}
For $j\in\mathbb{N}$, the 3 terms in the right-hand-side of the FPT density (\ref{eq:BPdensity_gen}) can be interpreted as follows:
\begin{description}
\item{-} starting from $(j,1)$ at time 0, with probability $p$, the process reaches $(0,1)$ for the first time at time $t$, 
and  no switches occurred up to time  $t$; 
\item{-} starting from $(j,2)$ at time 0, with probability $1-p$, the process reaches $(0,2)$ for the first time at time $t$;
\item{-} starting from $(j,1)$ at time 0, with probability $p$, the process reaches $(k,1)$, with $k\in\mathbb{N}$, 
at time $\tau\in (0,t)$ without crossing $(0,1)$ in the interval $(0,\tau)$, then a switch occurs at  time  $t$, and 
starting from $(k,2)$ the process reaches $(0,2)$ for the first time at time $t$.
\end{description}
\par
Let 
$$
B_j(s)  = {\cal L}[b_j(t)]=\int_0^{+\infty}e^{-s\,t}b_j(t)\;dt,\qquad s>0,j\in\mathbb{N}
$$
be the Laplace transform of the FPT density $b_j(t)$.
\begin{proposition} For $s>0$ and $j\in\mathbb{N}$, one has
\begin{eqnarray}
&&\hspace*{-1.0cm}B_j(s)  =  {p\over [\varphi_1(s)]^j} +{1-p\over [\psi_1(s)]^j} 
+{\eta_1\,p\,\varphi_2(s)\over\lambda_1[\psi_1(s)-\varphi_1(s)]\,[\psi_1(s)-\varphi_2(s)]}
\nonumber\\
&&\hspace*{0.5cm}\times {[\psi_1(s)]^j-[\varphi_1(s)]^j\over [\varphi_1(s)]^{j-1}[\psi_1(s)]^{j-1}},
\qquad s>0,\; j\in\mathbb{N},
\label{eq:LTBPdensity_gen}
\end{eqnarray}
with 
\begin{equation}
 \varphi_1(s),\varphi_2(s)=\frac{s+\lambda_1+\mu_1+\eta_1 \pm 
 \sqrt{  (s+ \lambda_1+\mu_1+\eta_1)^2-4\lambda_1\mu_1}}{2\mu_1}, 
 \label{eq:defphi_i}
\end{equation}
for $0<\varphi_2(s)<1<\varphi_1(s)$, and
\begin{equation}
 \psi_1(s), \psi_2(s)=\frac{s+\lambda_2+\mu_2 \pm 
 \sqrt{  (s+ \lambda_2+\mu_2)^2-4\lambda_2\mu_2}}{2\mu_2}, 
 \label{eq:defpsi_i}
\end{equation}
for $0<\psi_2(s)<1<\psi_1(s)$. 
\end{proposition}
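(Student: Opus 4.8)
The plan is to take the Laplace transform of the FPT density expression (\ref{eq:BPdensity_gen}) term by term. The first two terms are immediate: from (\ref{FPT_density_MM1}), the FPT density $\widehat g_{j,0}^{(i)}(t)$ of the auxiliary $M/M/1$ queue has the classical Laplace transform $\mathcal L[\widehat g_{j,0}^{(i)}(t)](s) = [\chi_i(s)]^{-j}$, where $\chi_i(s)$ is the root exceeding $1$ of the quadratic $\mu_i z^2 - (s+\lambda_i+\mu_i) z + \lambda_i = 0$; this is standard for the busy-period transform of the $M/M/1$ queue and can be quoted from \cite{AKW1991} or verified directly from the Bessel-function representation via the generating-function identity $\sum_j I_j(z) w^j = e^{(z/2)(w+1/w)}$. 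Applying the additional exponential factor $e^{-\eta_1 t}$ in the first term simply shifts $s \mapsto s+\eta_1$, which converts the quadratic for environment $1$ into exactly the one defining $\varphi_1(s),\varphi_2(s)$ in (\ref{eq:defphi_i}); hence the first term contributes $p/[\varphi_1(s)]^j$. The second term, with no shift, uses the quadratic for environment $2$, producing $(1-p)/[\psi_1(s)]^j$ with $\psi_1(s)$ as in (\ref{eq:defpsi_i}).

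For the third term I would use that the integral is a convolution in $t$, so its Laplace transform factorizes:
\begin{equation*}
 \mathcal L\!\left[\int_0^t e^{-\eta_1\tau}\,\widehat\alpha_{j,k}^{(1)}(\tau)\,\widehat g_{k,0}^{(2)}(t-\tau)\,d\tau\right](s)
 = \mathcal L\!\left[e^{-\eta_1 t}\widehat\alpha_{j,k}^{(1)}(t)\right](s)\cdot\mathcal L\!\left[\widehat g_{k,0}^{(2)}(t)\right](s).
\end{equation*}
The second factor is $[\psi_1(s)]^{-k}$ as above. For the first factor I need the Laplace transform of the taboo transition probability $\widehat\alpha_{j,k}^{(1)}$ of (\ref{MM1_abs_prob}), again with $s$ shifted by $\eta_1$; the known formula (derivable from the difference in (\ref{MM1_abs_prob}) of two Bessel terms, or from standard $M/M/1$ taboo-probability results) gives something of the form $\mathcal L[e^{-\eta_1 t}\widehat\alpha_{j,k}^{(1)}(t)](s) = c_{j,k}(s)$ expressible through $\varphi_1(s)$ and $\varphi_2(s)$, with the structure $\bigl([\varphi_2(s)]^{|k-j|}-[\varphi_2(s)]^{k+j}\bigr)$ times a factor involving $\lambda_1$ and $\varphi_1(s)-\varphi_2(s)$. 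The remaining work is then to perform the sum over $k\ge 1$ of $c_{j,k}(s)\,[\psi_1(s)]^{-k}$: this splits into geometric series in $\psi_1(s)^{-1}$ (using $\psi_1(s)>1$ and $\varphi_2(s)<1$ for convergence), and after collecting terms one should recover the stated closed form. The appearance of the three denominator factors $\lambda_1$, $[\psi_1(s)-\varphi_1(s)]$ and $[\psi_1(s)-\varphi_2(s)]$ in (\ref{eq:LTBPdensity_gen}), together with the numerator $\varphi_2(s)$ and the symmetric combination $\bigl([\psi_1(s)]^j-[\varphi_1(s)]^j\bigr)/\bigl([\varphi_1(s)\psi_1(s)]^{j-1}\bigr)$, is exactly the signature of summing such a geometric series after using $\varphi_1(s)\varphi_2(s)=\lambda_1/\mu_1$ (from Vieta on the defining quadratic) to eliminate $\varphi_2(s)$ in favor of $\varphi_1(s)$.

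The main obstacle I anticipate is bookkeeping rather than conceptual: getting the Laplace transform of the taboo probability $\widehat\alpha_{j,k}^{(1)}$ in a clean closed form and then interchanging the sum over $k$ with the Laplace integral (justified by absolute convergence, since all summands are nonnegative and the total probability bound (\ref{abs_first_passage}) controls the tail). One must split the sum into the ranges $1\le k\le j$ and $k>j$ because of the $|k-j|$ in the exponent of $\varphi_2(s)$, sum each resulting geometric series separately, and then simplify the algebraic mess using the Vieta relations $\varphi_1\varphi_2 = \lambda_1/\mu_1$, $\varphi_1+\varphi_2 = (s+\lambda_1+\mu_1+\eta_1)/\mu_1$ and the analogous ones for $\psi_1,\psi_2$. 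With MATHEMATICA available (as the paper notes), the final simplification to the form (\ref{eq:LTBPdensity_gen}) is routine; the delicate points are the convergence conditions $0<\varphi_2(s)<1<\varphi_1(s)$ and $0<\psi_2(s)<1<\psi_1(s)$, which hold for all $s>0$ and are what the proposition's statement records.
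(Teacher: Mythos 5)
The paper states this proposition without any proof, so there is nothing to compare against directly; your plan is the natural derivation the authors evidently have in mind, namely a term-by-term Laplace transform of (\ref{eq:BPdensity_gen}) using the convolution theorem, the classical transform $[\varphi_1(s)]^{-j}$ and $[\psi_1(s)]^{-j}$ of the (exponentially damped) $M/M/1$ first-passage densities, and the transform of the taboo probabilities $\widehat\alpha^{(1)}_{j,k}$. I checked the computation you outline: the transform of $e^{-\eta_1 t}\widehat\alpha^{(1)}_{j,k}(t)$ equals $\bigl(\lambda_1/\mu_1\bigr)^{(k-j)/2}\bigl[(\varphi_2/\varphi_1)^{|k-j|/2}-(\varphi_2/\varphi_1)^{(k+j)/2}\bigr]/\bigl[\mu_1(\varphi_1(s)-\varphi_2(s))\bigr]$ (so the base of the powers is $\sqrt{\varphi_2/\varphi_1}$ with a prefactor, slightly different from the shape you guessed, but this washes out via $\varphi_1\varphi_2=\lambda_1/\mu_1$ exactly as you anticipate), and splitting the sum over $k$ at $k=j$ and summing the geometric series in $1/\psi_1(s)$ does reproduce the third term of (\ref{eq:LTBPdensity_gen}). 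Your plan is therefore correct and complete in outline, with only routine bookkeeping left.
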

\par
The Laplace transform (\ref{eq:LTBPdensity_gen}) is useful to calculate the probability that 
the process ${\bf N}(t)$ eventually reach the states $(0,1)$ or $(0,2)$ and the FPT  moments. 
\par
Let us now determine the  probability of the eventual first queue emptying. 
Since $\eta_2=0$, if $\lambda_2\leq \mu_2$ then  $\psi_1(0)=1$, so that 
${\mathbb P}(T_j<\infty)=1$, whereas if $\lambda_2> \mu_2$, then we have 
$\psi_1(0)=\lambda_2/\mu_2$, so that for $j\in\mathbb{N}$ it results:
\begin{eqnarray}
&&\hspace*{-1.4cm}\mathbb{P}(T_j<+\infty)= \int_0^{+\infty}b_j(t)\;dt=B_j(0)=
{p\over [\varphi_1(0)]^j} +(1-p)\Bigl({\mu_2\over\lambda_2}\Bigr)^j\nonumber\\
&&\hspace*{-0.5cm}+{\eta_1\,p\,\varphi_2(0)\over\lambda_1[\varphi_1(0)-\lambda_2/\mu_2][\varphi_2(0)-\lambda_2/\mu_2]}
 \Bigl({\mu_2\over\lambda_2}\Bigr)^{j-1}{(\lambda_2/\mu_2)^j-[\varphi_1(0)]^j\over [\varphi_1(0)]^{j-1}}\cdot
\label{eq:LTBPprob}
\end{eqnarray}
Note that if $j=1$, $b_1(t)$ represents the  busy period density of the considered queueing model; hence, if $\lambda_2\leq \mu_2$
the busy period termination is certain. On the contrary, if $\lambda_2> \mu_2$, (\ref{eq:LTBPprob}) takes the simplest form:
 $$
{\mathbb  P}(T_1<+\infty)=\int_0^{+\infty}b_1(t)\;dt=
 {p\over \varphi_1(0)}+{1-p\over\psi_1(0)}
 +{p\, \eta_1 \varphi_2(0)\over \lambda_1[\psi_1(0)-\varphi_2(0)]}.
$$
For $\lambda_2>\mu_2$, in Figure~\ref{fig10} we plot  $\mathbb{P}(T_j<+\infty)$, given in (\ref{eq:LTBPprob}), as function of $\eta_2$ for $j=1,3,5,10$. 
The case $j = 1$ corresponds to the probability that the busy period ends.
%
\begin{figure}[t]  
\centering
\subfigure[$\lambda_1=1.0,\mu_1=0.5$]{\includegraphics[width=0.42\textwidth]{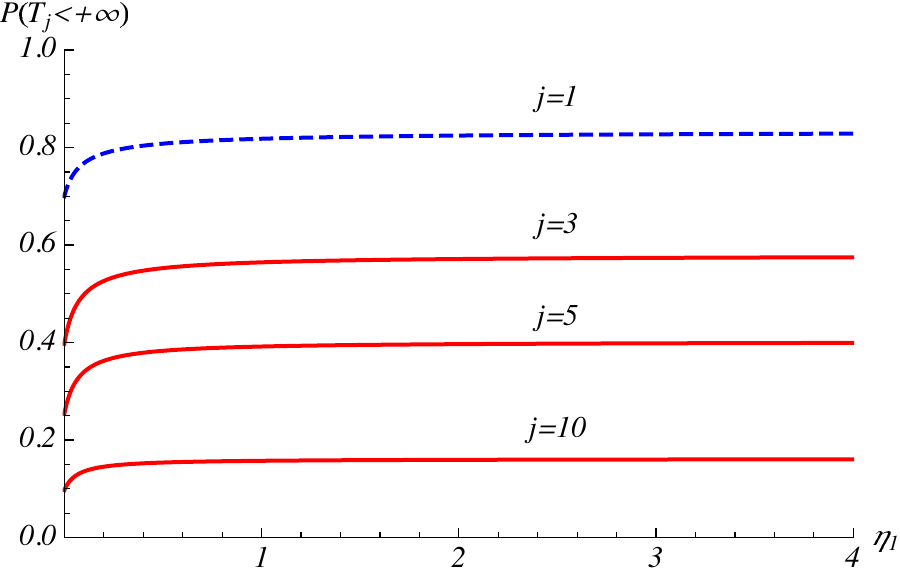}}
\hspace*{4mm}
\subfigure[$\lambda_1=0.5,\mu_1=1.0$]{\includegraphics[width=0.42\textwidth]{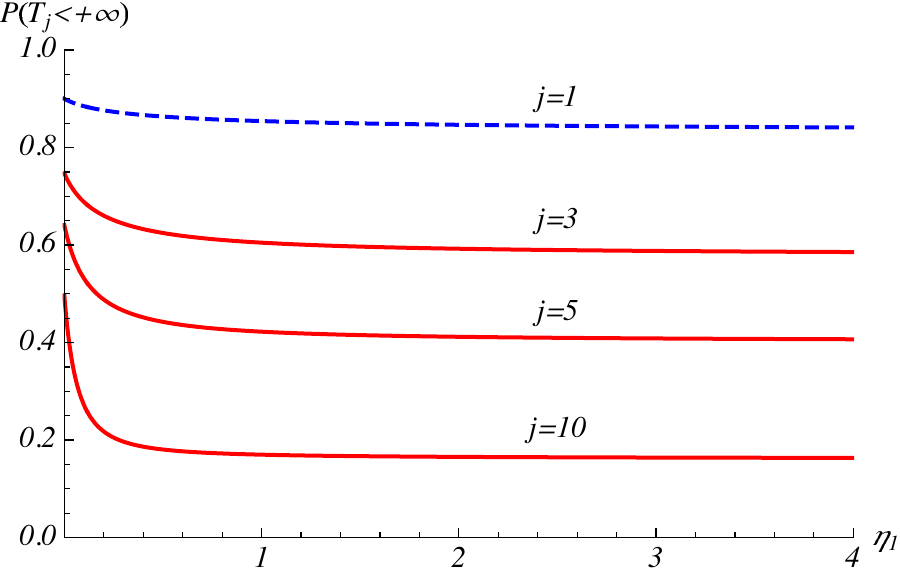}}\\
\caption{Plots of FPT probabilities $\mathbb{P}(T_j<+\infty)$ as function of $\eta_1$ for  $\eta_2=0$, $\lambda_2=1.2$, $\mu_2=1.0$ and  $p=0.4$.}
\label{fig10}
\end{figure}
%
\par
When $\lambda_2<\mu_2$, $\eta_1>0$ and $j\in\mathbb{N}$, from (\ref{eq:LTBPdensity_gen}) we obtain the FPT mean 
\begin{eqnarray}
&&\hspace*{-0.8cm}\mathbb{E}(T_j)={j\over \mu_2-\lambda_2}+p\,\biggl[1-{1\over [\varphi_1(0)]^j} \biggr]\,\biggl[ {1\over (\lambda_1-\mu_1+\eta_1)\varphi_1(0)-(\lambda_1-\mu_1-\eta_1)}\nonumber\\
&&\hspace*{0.8cm}+{1\over (\lambda_1-\mu_1+\eta_1)\varphi_2(0)-(\lambda_1-\mu_1-\eta_1)}-{1\over\eta_1}\,{\mu_1-\lambda_1\over\mu_2-\lambda_2}\biggr]\cdot
\label{FPT_mean_discret}
\end{eqnarray}
Clearly, for $p=0$, the right-hand side of (\ref{FPT_mean_discret}) corresponds to the FPT mean of the $M/M/1$ queue in the second environment.
\par
Finally, in Figure~\ref{fig11} we plot the mean (\ref{FPT_mean_discret}) as function of $\eta_1$; since $\lambda_2< \mu_2$, the first passage through zero state is a 
certain event. 
\begin{figure}[t]  
\centering
\subfigure[$\lambda_1=1.0,\mu_1=0.5$]{ \includegraphics[width=0.42\textwidth]{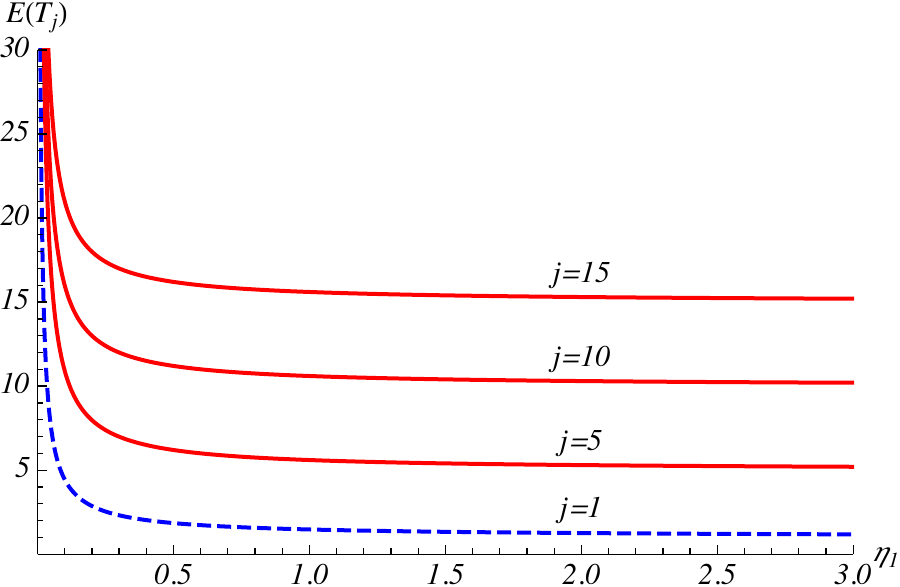}}
\hspace*{4mm}
\subfigure[$\lambda_1=0.5,\mu_1=1.0$]{ \includegraphics[width=0.42\textwidth]{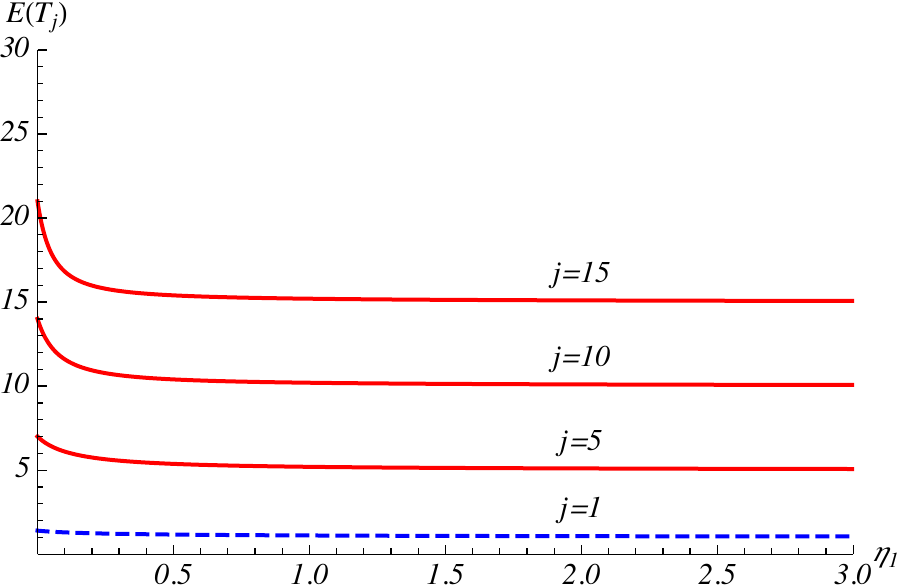}}\\
\caption{Plots of FPT mean $\mathbb{E}(T_j)$ as function of $\eta_1$ for  $\eta_2=0$, $\lambda_2=1.0$, $\mu_2=2.0$ and  $p=0.4$.}
\label{fig11}
\end{figure}
%
\section{Diffusion approximation}\label{section4}
This section is devoted to the construction of a heavy-traffic diffusion approximation for the process ${\bf N}(t)$. 
As customary, we adopt a scaling procedure that is usual in queueing theory and in other contexts  
(c.f.\ Di Crescenzo  {\em et al.}\ \cite{DGN2003} or Dharmaraja {\em et al.}\ \cite{DDGN2015}, 
for instance). As a first step, we perform a different parameterization of the arrival and service 
rates of the stochastic model introduced in Section \ref{section2}. Specifically, we set  
\begin{equation}
  \lambda_i=\frac{ \lambda_i^*}{\epsilon}+\frac{\omega_i^2}{2\epsilon^2}, 
  \qquad 
  \mu_i=\frac{\mu_i^*}{\epsilon}+\frac{\omega_i^2}{2\epsilon^2}
  \qquad  (i=1,2),
 \label{eq:rates}
\end{equation}
where $ \lambda_i^*>0$, $ \mu_i^*>0$, $\omega_i^2>0$, for $i=1,2$, and $\epsilon>0$. 
We remark that $\epsilon$ is a positive parameter that has a relevant role in the scaling  
procedure indicated below. 
\par
Let us now consider the position $N_\epsilon^*(t)=N(t)\epsilon$, for any $t>0$. Hence, 
the process $\{{\bf N}^*_{\epsilon}(t)=[N_\epsilon^*(t),\mathscr{E}(t)]=[N(t)\epsilon,\mathscr{E}(t)],t>0\}$
is a two-dimensional continuous-time Markov chain, having state-space 
$\mathbb{N}_{0,\epsilon}\times\{1,2\}$, where $\mathbb{N}_{0,\epsilon}=\{0, \epsilon, 2\epsilon, \ldots\}$. 
We denote the transient probabilities of ${\bf N}^*_{\epsilon}(t)$, $ t>0$, as 
\begin{equation}
 p_{\epsilon}(n, i; t)
 =\mathbb P[{\bf N}^*_{\epsilon}(t)=(n \epsilon, i)]
 =\mathbb P[n \epsilon\leq  N_\epsilon^*(t) < (n+1) \epsilon, \mathscr{E}(t)=i)], 
 \label{eq:apptransprob}
\end{equation}
for $n\in \mathbb{N}_0$ and $ i=1,2$. 
In the limit as $\epsilon \to 0^+$, it can be shown that the scaled process $N_\epsilon^*(t)$
converges weakly to a two-dimensional stochastic process, say 
$\{{\bf X}(t) =[X(t),\mathscr{E}(t)], t \geq 0\}$, having state-space $\mathbb R^+_0\times \{1,2\}$. 
Note that ${\bf X}(t)$ may be viewed as a restricted Wiener process alternating between two 
environments, with switching rates $\eta_1$ and $\eta_2$. 
For $x \in \mathbb R^+_0$, $t> 0$ and $i=1,2$, let 
\begin{equation}
 f_i(x,t)=\frac{d}{dx}\,\mathbb P\{ X(t) <x,\mathscr{E}(t) = i\} 
 \label{eq:densfjxt}
\end{equation}
denote the probability densities of the process ${\bf X}(t)$, where the initial state  
is
\begin{equation}
{\bf X}(0)=
\left\{\begin{array}{ll}
(y,1),&\;{\rm with\;probability}\; p,\\
(y,2),&\;{\rm with\;probability}\;1-p,
\end{array}\right.
 \label{eq:initcondit_X}
\end{equation}
with $y\in \mathbb{R}^+_0$. Starting from the forward 
equations for ${\bf N}(t)$, given in Eqs.\ (\ref{equat_env1}) and (\ref{equat_env2}), 
the scaling procedure mentioned above yields that the densities (\ref{eq:densfjxt}) satisfy 
the following partial differential equations of Kolmogorov type, for $i=1,2$, $x \in \mathbb R^+$ and $t> 0$: 
\begin{equation}
 \frac{\partial f_i(x,t)}{ \partial t}
 = -(\lambda_i^*-\mu_i^*)\frac{\partial f_i(x,t)}{ \partial x}
 +\frac{\omega_i^2}{2} \frac{\partial^2 f_i(x,t)}{ \partial x^2}
 +\eta_{3-i} f_{3-i}(x,t) - \eta_i f_i(x,t).
 \label{eqKolmogorov}
\end{equation}
We note that the first 2 terms in the right-hand-side of (\ref{eqKolmogorov}) correspond 
to the classical diffusive operators of a Wiener process, whereas the last 2 terms express the joking 
between the two different environments, occurring with switching rates $\eta_1$ and $\eta_2$. 
The first and second infinitesimal moments of the Wiener process in the $i$-th environment are 
respectively $\lambda_i^*-\mu_i^*$ and $\omega_i^2$, $i=1,2$. It is worth pointing out that, 
due to the scaling procedure, the first equations of systems (\ref{equat_env1}) and 
(\ref{equat_env2}) lead to the following reflecting condition at 0:
\begin{equation}
 \lim_{x\to 0^+} \left[(\lambda_i^*-\mu_i^*) f_i(x,t)-\frac{\omega_i^2}{2}\frac{\partial f_i(x,t)}{\partial x}\right]=0,
 \label{eq:reflecting}
\end{equation}
for $t>0$ and $i=1,2$. Moreover, since for the process ${\bf N}(t)$ the 
initial condition is expressed by a Bernoulli trial, similarly as (\ref{initial_condition})  
we have the following dichotomous initial condition for densities (\ref{eq:densfjxt}): 
\begin{equation}
\lim_{t\downarrow 0} f_1(x,t)=p\, \delta(x-y), \quad \lim_{t\downarrow 0}f_{2}(x,t)=(1-p)\,\delta(x-y),
 \qquad 0\leq p\leq 1,
 \label{initial_condition_cont}
\end{equation}
where $ \delta(\cdot)$ is the Dirac delta function. Furthermore, in analogy with  
(\ref{normalization_condition}), the normalization condition    
\begin{equation}
\int_{0}^{+\infty}[f_1(x,t)+f_2(x,t)]\; dx=1
\label{normalization_conditionf}
\end{equation}
holds for all $t\geq 0$. We remark that positions (\ref{eq:rates}) express a heavy-traffic condition, since 
the rates $\lambda_i$ and $\mu_i$ tend to  infinity when $\epsilon \to 0^+$ in the approximation procedure. 
%
\subsection{Steady-state density}
Let us now investigate the steady-state densities of ${\bf X}(t)$.  Let ${\bf X}=(X,\mathscr{E})$ be the two-dimensional random variable 
of the system in the steady-state regime. We aim to determine  the steady-state densities in the two environments, defined as  
\begin{equation}
W_i(x)=\lim_{t\to +\infty}f_i(x,t),\qquad x\in\mathbb{R}_0^+, \quad i=1,2.
\label{steady_state_density}
\end{equation}
From (\ref{eqKolmogorov}) and (\ref{eq:reflecting}) one has the following differential equations:
$$
-(\lambda_i^*-\mu_i^*){dW_i(x)\over dx}+{\omega_i^2\over 2} {d^2W_i(x)\over dx^2} +\eta_{3-i} f_{3-i}(x,t) - \eta_i f_i(x,t),\quad i=1,2,
$$
to be solved with the boundary conditions:
$$
\lim_{x\to 0^+} \left[(\lambda_i^*-\mu_i^*) W_i(x)- {\omega_i^2\over 2} {dW_i(x)\over dx}\right]=0,\quad i=1,2.
$$
Hence, denoting by 
$$
M_i(z)=\mathbb{E}[e^{zX}  \mathbbm{1}_{\mathscr{E}=i}]=\int_0^{+\infty}e^{zx}W_i(x)\;dx,\quad i=1,2
$$
the moment generating functions for the two environments in steady-state regime, one has:
\begin{eqnarray}
&&\hspace{-1.5cm}M_1(z)={2\eta_2\omega_2^2{\displaystyle\lim_{x\to 0}W_2(x)}-\omega_1^2[\omega_2^2z^2+2(\lambda_2^*-\mu_2^*)z-2\eta_2]{\displaystyle\lim_{x\to 0}}W_1(x)\over P^*(z)},\nonumber\\
&&\label{mom_generating_function_1}\\
&&\hspace{-1.5cm}M_2(z)={2\eta_1\omega_1^2{\displaystyle\lim_{x\to 0}}W_1(x)-\omega_2^2[\omega_1^2z^2+2(\lambda_1^*-\mu_1^*)z-2\eta_1]{\displaystyle\lim_{x\to 0}}W_2(x)\over P^*(z)},\nonumber
\end{eqnarray}
where $P^*(z)$ is the following third-degree polinomial in $z$:
\begin{eqnarray}
&&\hspace*{-1.0cm}P^*(z)=\omega_1^2\omega_2^2z^3+2[\omega_1^2(\lambda_2^*-\mu_2^*)+\omega_2^2(\lambda_1^*-\mu_1^*)]z^2\nonumber\\
&&\hspace*{0.5cm}-2[\omega_1^2\eta_2-2(\lambda_1^*-\mu_1^*)(\lambda_2^*-\mu_2^*)+\omega_2^2\eta_1]z\nonumber\\
&&\hspace*{0.5cm}-4[\eta_1(\lambda_2^*-\mu_2^*)+\eta_2(\lambda_1^*-\mu_1^*)].
\label{pol_continuo}
\end{eqnarray}
By taking into account the normalization condition $M_1(0)+M_2(0)=1$,  from (\ref{mom_generating_function_1}) one obtains:
\begin{equation}
\omega_1^2\lim_{x\to 0}W_1(x)+\omega_2^2\lim_{x\to 0}W_2(x)={2\,\bigl[\eta_1(\lambda_2^*-\mu_2^*)+\eta_2(\lambda_1^*-\mu_1^*)\bigr]\over \eta_1+\eta_2}\cdot
\label{cond_continue}
\end{equation}
Recalling that $\eta_1+\eta_2>0$, Eq.~(\ref{cond_continue}) shows that the steady-state regime 
exists if and only if one of the following cases holds:
\begin{description}
\item{\em (i)}  $\eta_2=0$ and $\lambda_2^*/\mu_2^*<1$, 
\item{\em (ii)}  $\eta_1=0$ and $\lambda_1^*/\mu_1^*<1$, 
\item{\em (iii)}   $\eta_1>0$, $\eta_2>0$ and  $\eta_1(\mu_2^*-\lambda_2^*)+ \eta_2 (\mu_1^*-\lambda_1^*)>0$. 
\end{description}
Hereafter, we  consider separately the  three cases.
\subsection*{$\bullet$ Case {\it (i)}}
If $\eta_2=0$ and $\lambda_2^*/\mu_2^*<1$, one can easily prove that
$$
W_1(x)=0, \qquad W_2(x)={2(\mu_2^*-\lambda_2^*)\over\omega_2^2}
\exp\Bigl\{-{2(\mu_2^*-\lambda_2^*)\,x\over\omega_2^2}\Bigr\},
\qquad x\in\mathbb{R}^+. 
$$
Hence, if $\eta_2=0$ and $\lambda_2^*/\mu_2^*<1$,  the steady-state density $W(x)=W_1(x)+W_2(x)$ is exponential, 
with parameter $2(\mu_2^*-\lambda_2^*)/\omega_2^2$. 
\subsection*{$\bullet$ Case {\it (ii)}}
If $\eta_1=0$ and $\lambda_1^*/\mu_1^*<1$, similarly to case {\em (i)}, one has
$$
W_1(x)={2(\mu_1^*-\lambda_1^*)\over\omega_1^2}\exp\Bigl\{-{2(\mu_1^*-\lambda_1^*)\,x\over\omega_1^2}\Bigr\},\qquad
W_2(x)=0, \qquad x\in\mathbb{R}^+,
$$
so that  the steady-state density $W(x)$ is exponential with parameter $2(\mu_1^*-\lambda_1^*)/\omega_1^2$. 
\subsection*{$\bullet$ Case {\it (iii)}}
Let $\eta_1>0$, $\eta_2>0$ and  $\eta_1(\mu_2^*-\lambda_2^*)+ \eta_2 (\mu_1^*-\lambda_1^*)>0$. Denoting by 
$\xi_1^*,\xi_2^*,\xi_3^*$ the roots of $P^*(z)$, given in (\ref{pol_continuo}), one has:
\begin{eqnarray}
&&\hspace*{-1.0cm}\xi_1^*+\xi_2^*+\xi_3^*={2\big[\omega_1^2(\mu_2^*-\lambda_2^*)+ \omega_2^2 (\mu_1^*-\lambda_1^*)\big]\over\omega_1^2\,\omega_2^2},\nonumber\\
&&\hspace*{-1.0cm}\xi_1^*\,\xi_2^*+\xi_1^*\,\xi_3^*+\xi_2^*\,\xi_3^*={-\big[\omega_1^2\,\eta_2-2(\mu_1^*-\lambda_1^*)(\mu_2^*-\lambda_2^*)+\omega_2^2\,\eta_1\big]
\over\omega_1^2\,\omega_2^2},
\label{roots1_cont}\\
&&\hspace*{-1.0cm}\xi_1^*\,\xi_2^*\,\xi_3^*={-4\bigl[\eta_1(\mu_2^*-\lambda_2^*)+ \eta_2 (\mu_1^*-\lambda_1^*)\bigr]\over\omega_1^2\,\omega_2^2},\nonumber
\end{eqnarray}
so that $\xi_1^*\,\xi_2^*\,\xi_3^*<0$. Furthermore, from  (\ref{pol_continuo}) it follows:
\begin{eqnarray}
&&P^*(0)=4\bigl[\eta_1(\mu_2^*-\lambda_2^*)+ \eta_2 (\mu_1^*-\lambda_1^*)\bigr]>0,\nonumber\\
&&P^*\Bigl({2(\mu_1^*-\lambda_1^*)\over\omega_1^2}\Bigr)={4\eta_1\big[\omega_1^2(\mu_2^*-\lambda_2^*)- \omega_2^2 (\mu_1^*-\lambda_1^*)\big]
\over\omega_1^2},\label{roots2_cont}\\
&&P^*\Bigl({2(\mu_2^*-\lambda_2^*)\over\omega_2^2}\Bigr)={-4\eta_2\big[\omega_1^2(\mu_2^*-\lambda_2^*)- \omega_2^2 (\mu_1^*-\lambda_1^*)\big]
\over\omega_2^2}.\nonumber
\end{eqnarray}
Making use of (\ref{roots1_cont}) and (\ref{roots2_cont}), it is not hard to prove that $P^*(z)$ has one negative root and two positive roots. In the sequel, we 
assume that $\xi_1^*>0, \xi_2^*>0$ and $\xi_3^*<0$, and $P^*(z)=\omega_1^2\omega_2^2(z-\xi_1^*)(z-\xi_2^*)(z-\xi_3^*)$. 
\begin{proposition}\label{prop_joint_dens_cont}
If $\eta_1>0$, $\eta_2>0$ and  $\eta_1(\mu_2^*-\lambda_2^*)+ \eta_2 (\mu_1^*-\lambda_1^*)>0$, then the  steady-state density of 
${\bf X}=(X,\mathscr{E})$ can be expressed in terms of the roots  $\xi_1^*>0, \xi_2^*>0$ and $\xi_3^*<0$ of the polynomial 
(\ref{pol_continuo}) as follows:
\begin{equation}
W_i(x)={\eta_{3-i}\over\eta_1+\eta_2}\,\Bigl[A_i^*h_1(x)+(1-A_i^*)h_2(x)\Bigr],\qquad x\in\mathbb{R}^+, \, i=1,2,
\label{steady_state_density_i}
\end{equation}
where $h_i(x)$ denotes an exponential density with mean $1/\xi_i^*$ and
\begin{equation}
A_i^*={4\,[\eta_1(\mu_2^*-\lambda_2^*)+\eta_2(\mu_1^*-\lambda_1^*)]\over\omega_1^2\,\omega_2^2\,\xi_1^*\,\xi_3^*\,(\xi_1^*-\xi_2^*)}\,
{\omega_{3-i}^2(\xi_1^*+\xi_3^*)-2(\mu_{3-i}^*-\lambda_{3-i}^*)\over \omega_{3-i}^2\xi_3^*-2(\mu_{3-i}^*-\lambda_{3-i}^*)} 
\label{coef_mixture_continuo}
\end{equation}
for $ i=1,2$.
\end{proposition}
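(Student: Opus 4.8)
The plan is to mirror the proof of Proposition~\ref{prop:ssprob} exactly, working with moment generating functions instead of probability generating functions. First I would observe that $P^*(\xi_3^*)=0$, so for the moment generating functions $M_1(z)$ and $M_2(z)$ in (\ref{mom_generating_function_1}) to remain finite at $z=\xi_3^*$ (which is necessary since $M_i(z)$ is finite on a neighbourhood of $0$ and $\xi_3^*<0$ lies in the domain of convergence, being a negative value), the numerators must vanish as $z\to\xi_3^*$. Imposing this, together with the normalization relation (\ref{cond_continue}), yields a linear system for $\lim_{x\to 0}W_1(x)$ and $\lim_{x\to 0}W_2(x)$ whose solution gives closed-form expressions for these two boundary values, in analogy with (\ref{equil_probabilities_zero}). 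The quantities $\omega_{3-i}^2\xi_3^*-2(\mu_{3-i}^*-\lambda_{3-i}^*)$ appearing in the denominator of $A_i^*$ are nonzero precisely because, by (\ref{roots2_cont}), $\xi_3^*\neq 2(\mu_j^*-\lambda_j^*)/\omega_j^2$ for $j=1,2$ (the latter are the two positive roots when $\omega_1^2(\mu_2^*-\lambda_2^*)\neq\omega_2^2(\mu_1^*-\lambda_1^*)$, and the argument is adapted in the degenerate case).

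Next I would substitute the expressions for $\lim_{x\to 0}W_i(x)$ back into (\ref{mom_generating_function_1}). Because $\xi_3^*$ is a root of $P^*(z)=\omega_1^2\omega_2^2(z-\xi_1^*)(z-\xi_2^*)(z-\xi_3^*)$, the factor $(z-\xi_3^*)$ cancels between the (now vanishing-at-$\xi_3^*$) numerator and the denominator, leaving $M_i(z)$ as a ratio of a linear polynomial in $z$ over $\omega_1^2\omega_2^2(z-\xi_1^*)(z-\xi_2^*)$. A partial-fraction decomposition in $z$ then writes each $M_i(z)$ as a linear combination of $1/(\xi_1^*-z)$ and $1/(\xi_2^*-z)$, i.e.\ of the moment generating functions $\xi_i^*/(\xi_i^*-z)$ of exponential densities $h_i(x)=\xi_i^* e^{-\xi_i^* x}$ with mean $1/\xi_i^*$. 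Reading off the coefficients and simplifying — using (\ref{roots1_cont}) to re-express symmetric functions of the roots in terms of the original parameters — produces the claimed form (\ref{steady_state_density_i}) with the coefficient $A_i^*$ as in (\ref{coef_mixture_continuo}); the prefactor $\eta_{3-i}/(\eta_1+\eta_2)$ emerges as $M_i(0)=\mathbb{P}(\mathscr{E}=i)$, exactly as in (\ref{eq:probE}) for the discrete model. Inverting the transforms termwise then gives the densities, since the inverse Laplace/moment transform of a mixture of exponential transforms is the corresponding mixture of exponential densities.

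The main obstacle I anticipate is purely algebraic: carrying out the partial-fraction step and then showing that the resulting coefficient equals the compact expression (\ref{coef_mixture_continuo}), which requires repeatedly trading the elementary symmetric polynomials $\xi_1^*+\xi_2^*+\xi_3^*$, $\xi_1^*\xi_2^*+\xi_1^*\xi_3^*+\xi_2^*\xi_3^*$, $\xi_1^*\xi_2^*\xi_3^*$ from (\ref{roots1_cont}) against the starred rate parameters, together with identities such as $P^*(\xi_3^*)=0$ to rewrite $\xi_1^*\xi_2^*$ in terms of $\xi_3^*$. The remaining verifications — that $A_i^*\in[0,1]$ so that (\ref{steady_state_density_i}) is a genuine (generalized) mixture, that $\xi_1^*,\xi_2^*>0$ so $h_1,h_2$ are bona fide densities, and that the normalization (\ref{normalization_conditionf}) holds — follow from the sign analysis already established via (\ref{roots1_cont})–(\ref{roots2_cont}) and from $M_1(0)+M_2(0)=1$. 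This computation is conveniently handled with the symbolic software mentioned in the paper, and the structure of the argument is identical to that of Proposition~\ref{prop:ssprob}, with geometric distributions replaced by exponential densities and generating functions by moment generating functions.
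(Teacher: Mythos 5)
Your proposal follows the paper's proof essentially verbatim: vanishing of the numerators of $M_1(z),M_2(z)$ at the root $\xi_3^*$ combined with the normalization (\ref{cond_continue}) pins down $\lim_{x\to 0}W_i(x)$, the factor $(z-\xi_3^*)$ then cancels leaving a ratio of a linear polynomial over $\omega_1^2\omega_2^2(z-\xi_1^*)(z-\xi_2^*)$, and inversion (equivalently, your partial-fraction step) yields the generalized mixture of exponentials with the stated coefficients. The only cosmetic difference is that you justify the vanishing of the numerators via finiteness of the moment generating function at the negative point $\xi_3^*$, which is a slightly more explicit version of the paper's convergence argument; note also that a generalized mixture does not require $A_i^*\in[0,1]$, so that last verification is not needed.
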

\begin{proof}
Since $P^*(\xi_3^*)=0$, we require that  also the numerators of $M_1(z)$  and $M_2(z)$, given in (\ref{mom_generating_function_1}),  tend to zero 
as $z\to\xi_3^*$, so that by virtue of  (\ref{cond_continue}) one has:
\begin{eqnarray}
&&\hspace*{-1.0cm}\lim_{x\to 0}W_1(x)={4\eta_2\over\omega_1^2\,\xi_3^*[\omega_2^2\,\xi_3^*-2(\mu_2^*-\lambda_2^*)]}\,
{\eta_1(\mu_2^*-\lambda_2^*)+ \eta_2 (\mu_1^*-\lambda_1^*)\over\eta_1+\eta_2},\nonumber\\
\label{lim_density}\\
&&\hspace*{-1.0cm}\lim_{x\to 0}W_2(x)={4\eta_1\over\omega_2^2\,\xi_3^*[\omega_1^2\,\xi_3^*-2(\mu_1^*-\lambda_1^*)]}\,
{\eta_1(\mu_2^*-\lambda_2^*)+ \eta_2 (\mu_1^*-\lambda_1^*)\over\eta_1+\eta_2}\cdot\nonumber
\end{eqnarray}
Note that from (\ref{roots1_cont}) and (\ref{roots2_cont}) we have $\xi_3^*\neq 2(\mu_1^*-\lambda_1^*)/\omega_1^2$ and 
$\xi_3^*\neq 2(\mu_2^*-\lambda_2^*)/\omega_2^2$. Hence, substituting (\ref{lim_density}) in (\ref{mom_generating_function_1}) 
one obtains:
\begin{eqnarray}
&&\hspace*{-1.5cm}M_1(z)={4\,\eta_2\over\omega_1^2\,\omega_2^2\,\xi_3^*}\,
{\eta_1(\mu_2^*-\lambda_2^*)+ \eta_2 (\mu_1^*-\lambda_1^*)\over\eta_1+\eta_2}\,
{1\over \omega_2^2\,\xi_3^*-2(\mu_2^*-\lambda_2^*)}\nonumber\\
&&\hspace*{1.5cm}\times{-\omega_2^2\,z-\omega_2^2\,\xi_3^*+2(\mu_2^*-\lambda_2^*)\over (z-\xi_1^*)(z-\xi_2^*)},\nonumber\\
&&\label{mom_generating_function_11}\\
&&\hspace*{-1.5cm}M_2(z)={4\,\eta_1\over\omega_1^2\,\omega_2^2\,\xi_3^*}\,
{\eta_1(\mu_2^*-\lambda_2^*)+ \eta_2 (\mu_1^*-\lambda_1^*)\over\eta_1+\eta_2}\,
{1\over \omega_1^2\,\xi_3^*-2(\mu_1^*-\lambda_1^*)}\nonumber\\
&&\hspace*{1.5cm}\times{-\omega_1^2\,z-\omega_1^2\,\xi_3^*+2(\mu_1^*-\lambda_1^*)\over (z-\xi_1^*)(z-\xi_2^*)}.\nonumber
\end{eqnarray}
Since the functions (\ref{mom_generating_function_11}) are finite for all $z$ in some interval containing the origin, 
the moment generating functions $M_1(z)$ and $M_2(z)$ determine the probability densities $W_1(x)$ and $W_2(x)$.  Indeed, 
by inverting the moment generating functions, for $x\in\mathbb{R}^+$ one obtains:
\begin{eqnarray}
&&\hspace*{-0.6cm}W_1(x)={4\,\eta_2\over\omega_1^2\,\omega_2^2\,\xi_3^*\,(\xi_1^*-\xi_2^*)}\,{\eta_1(\mu_2^*-\lambda_2^*)+ \eta_2 (\mu_1^*-\lambda_1^*)\over\eta_1+\eta_2}
{1\over 2(\mu_2^*-\lambda_2^*)-\omega_2^2\,\xi_3^*}\nonumber\\
&&\hspace*{-0.3cm}\times\Bigl\{\bigl[\omega_2^2(\xi_2^*+\xi_3^*)-2(\mu_2^*-\lambda_2^*)\bigr]\,e^{-\xi_2^*x}
-\bigl[\omega_2^2(\xi_1^*+\xi_3^*)-2(\mu_2^*-\lambda_2^*)\bigr]\,e^{-\xi_1^*x}\Bigr\},\nonumber\\
&&\label{steady_state_cont}\\
&&\hspace*{-0.6cm}W_2(x)={4\,\eta_1\over\omega_1^2\,\omega_2^2\,\xi_3^*\,(\xi_1^*-\xi_2^*)}\,{\eta_1(\mu_2^*-\lambda_2^*)+ \eta_2 (\mu_1^*-\lambda_1^*)\over\eta_1+\eta_2}
{1\over 2(\mu_1^*-\lambda_1^*)-\omega_1^2\,\xi_3^*}\nonumber\\
&&\hspace*{-0.3cm}\times\Bigl\{\bigl[\omega_1^2(\xi_2^*+\xi_3^*)-2(\mu_1^*-\lambda_1^*)\bigr]e^{-\xi_2^*x}
-\bigl[\omega_1^2(\xi_1^*+\xi_3^*)-2(\mu_1^*-\lambda_1^*)\bigr]e^{-\xi_1^*x}\Bigr\},\nonumber
\end{eqnarray}
from which (\ref{steady_state_density_i}) immediately follows.
\hfill\fine
\end{proof}
\par
%
\begin{figure}[t]  
\centering
\includegraphics[scale=0.6]{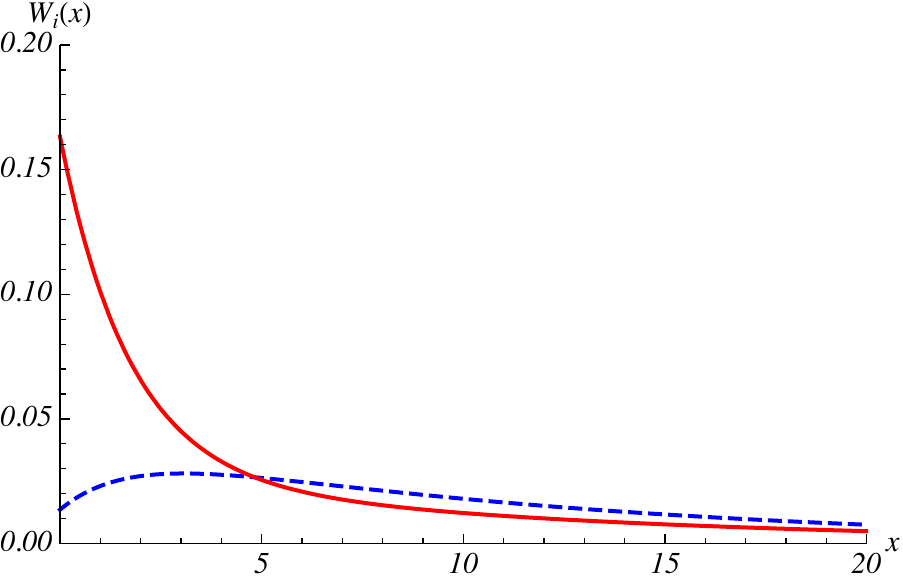}
\hspace*{4mm}
\includegraphics[scale=0.6]{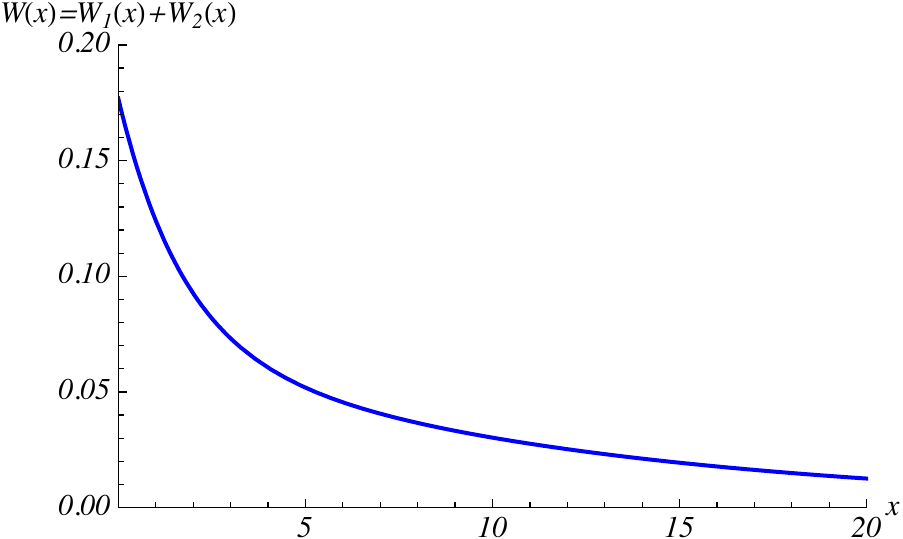}\\
\caption{Plots of densities $W_1(x)$ (line) and $W_2(x)$ (dashes), on the left, and 
$W(x)=W_1(x)+W_2(x)$, on the right, for  $\lambda_1^*=1$, $\mu_1^*=0.5$, $\lambda_2^*=1$, 
$\mu_2^*=2$, $\eta_1 = 0.1$, $\eta_2 = 0.08$, $\omega_1^2=1$ and $\omega_2^2=4$.}
\label{fig12}
\end{figure}
\par
In the continuous approximation, by virtue of (\ref{roots1_cont}), from  (\ref{mom_generating_function_11}) one has:
\begin{equation}
 \mathbb{P}(\mathscr{E}=i)\equiv M_i(0) =\int_0^{+\infty}W_i(x)\;dx={\eta_{3-i}\over \eta_1+\eta_2},\qquad i=1,2,
\label{eq:probE_cont}
\end{equation}
which provides the same result  given in (\ref{eq:probE}) for the discrete model. 
\par
Similarly to discrete case, we have that $\mathbb{P}(X<x|\mathscr{E}=1)$ and 
$\mathbb{P}(X<x|\mathscr{E}=2)$ are both generalized mixtures of two exponential distributions of means $1/\xi_1^*$ and $1/\xi_2^*$, respectively.
\par
Making use of Proposition~\ref{prop_joint_dens_cont} and of (\ref{eq:probE_cont}), the conditional means immediately follow:
\begin{equation}
\mathbb{E}[X|\mathscr{E}=i]=\int_{0}^{+\infty}x\,{W_i(x)\over \mathbb{P}(\mathscr{E}=i)}\;dx={A_i^*\over\xi_1^*}+{1-A_i^*\over\xi_2^*},\qquad i=1,2.
\label{cond_mean_cont}
\end{equation}
%
\begin{corollary} \label{corol2}
Under the assumptions of Proposition~\ref{prop_joint_dens_cont}, for $x\in\mathbb{R}^+$ one obtains the steady-state density 
of the process $X$:
\begin{equation} 
W(x)=W_1(x)+W_2(x)={\eta_2A_1^*+\eta_1A_2^*\over\eta_1+\eta_2}\,h_1(x)
+\Bigl[1-{\eta_2A_1^*+\eta_1A_2^*\over\eta_1+\eta_2}\Bigr]\,h_2(x),
\label{mixture_continue_system}
\end{equation}
where $A_1^*$ and $A_2^*$ are provided in (\ref{coef_mixture_continuo}) and $h_1(x)$, $h_2(x) $ are exponential density with means $1/\xi_1^*$ and $1/\xi_2^*$, respectively. 
\end{corollary}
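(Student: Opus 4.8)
The plan is to read $W(x)$ off directly from Proposition~\ref{prop_joint_dens_cont} by summation. Since $W_i(x)$ is the steady-state density of the event $\{X<x,\mathscr{E}=i\}$ (see (\ref{steady_state_density})), the law of total probability over the environment gives the steady-state density of the marginal variable $X$ as $W(x)=W_1(x)+W_2(x)$.

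First I would substitute the two expressions (\ref{steady_state_density_i}) and regroup:
$$W(x)=\frac{\eta_2}{\eta_1+\eta_2}\bigl[A_1^*h_1(x)+(1-A_1^*)h_2(x)\bigr]+\frac{\eta_1}{\eta_1+\eta_2}\bigl[A_2^*h_1(x)+(1-A_2^*)h_2(x)\bigr].$$
Collecting the coefficient of $h_1(x)$ yields the weight $(\eta_2A_1^*+\eta_1A_2^*)/(\eta_1+\eta_2)$, and collecting the coefficient of $h_2(x)$ yields $[\eta_2(1-A_1^*)+\eta_1(1-A_2^*)]/(\eta_1+\eta_2)=1-(\eta_2A_1^*+\eta_1A_2^*)/(\eta_1+\eta_2)$; this is exactly (\ref{mixture_continue_system}), with $A_1^*,A_2^*$ as in (\ref{coef_mixture_continuo}) and $h_1,h_2$ the announced exponential densities. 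I would then remark, as in the discrete analogue (Corollary~\ref{corollary1}), that this representation is a \emph{generalized} mixture: the weight $(\eta_2A_1^*+\eta_1A_2^*)/(\eta_1+\eta_2)$ need not lie in $[0,1]$, yet $W(x)$ is a bona fide probability density, its nonnegativity being inherited from that of $W_1$ and $W_2$ established in Proposition~\ref{prop_joint_dens_cont}, and $\int_0^{+\infty}W(x)\,dx=1$ following either from $\int_0^{+\infty}h_i(x)\,dx=1$ or, equivalently, from (\ref{eq:probE_cont}) via $\mathbb{P}(\mathscr{E}=1)+\mathbb{P}(\mathscr{E}=2)=1$.

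There is essentially no obstacle here: all the analytic work — solving the steady-state equations, locating the roots $\xi_1^*,\xi_2^*,\xi_3^*$, and inverting the moment generating functions — has already been carried out in Proposition~\ref{prop_joint_dens_cont}, and what remains is only the elementary regrouping displayed above. If one wished to continue (as after Corollary~\ref{corollary1} in the discrete case), one could also record the mean $\mathbb{E}(X)$ by integrating $x$ against (\ref{mixture_continue_system}), which is again immediate.
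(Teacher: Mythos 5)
Your proof is correct and is exactly the argument the paper intends (the corollary is stated without proof there, being an immediate consequence of Proposition~\ref{prop_joint_dens_cont}): summing the two expressions in (\ref{steady_state_density_i}) and collecting the coefficients of $h_1$ and $h_2$ gives precisely the weights in (\ref{mixture_continue_system}). Your added remarks on the generalized-mixture interpretation and the normalization via (\ref{eq:probE_cont}) are consistent with the paper's surrounding discussion.
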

Eq.~(\ref{mixture_continue_system}) shows that also $W(x)$ is a generalized mixture of two exponential densities with means 
 $1/\xi_1^*$ and $1/\xi_2^*$, respectively, so that
\begin{equation}
\mathbb{E}(X)={\eta_2A_1^*+\eta_1A_2^*\over\eta_1+\eta_2}{1\over \xi_1^*}+\Bigl[1-{\eta_2A_1^*+\eta_1A_2^*\over\eta_1+\eta_2}\Bigr]{1\over\xi_2^*}\cdot
 \label{expectations_continue_system}
\end{equation}
\par
Figure~\ref{fig12} shows the steady-state densities $W_1(x), W_2(x)$ (on the left) and 
$W(x)=W_1(x)+W_2(x)$ (on the right), obtained via Proposition \ref{prop_joint_dens_cont} and Corollary~\ref{corol2}, for 
$\lambda_1^*=1$, $\mu_1^*=0.5$, $\lambda_2^*=1$, $\mu_2^*=2$, $\eta_1 = 0.1$, $\eta_2 = 0.08$,
$\omega_1^2=1$ and  $\omega_2^2=4$. The roots of polynomial  (\ref{pol_continuo}) can be evaluated by means of 
MATHEMATICA$^{\footnotesize{\rm \textregistered}}$, so that 
$\xi_1^*=0.586811$, $\xi_2^*=0.0871$, $\xi_3^*=-1.17391$. 
%
\begin{figure}[t]  
\centering
\includegraphics[scale=0.6]{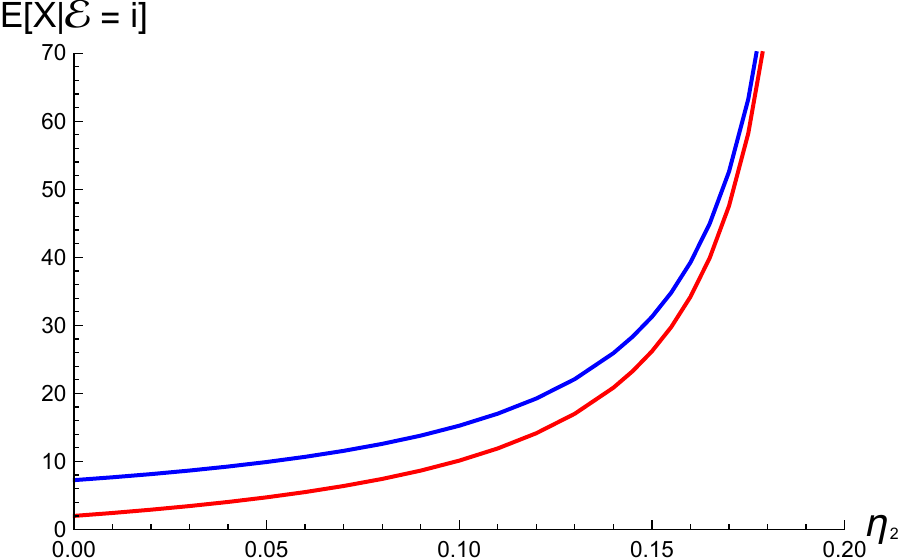}
\hspace*{4mm}
\includegraphics[scale=0.6]{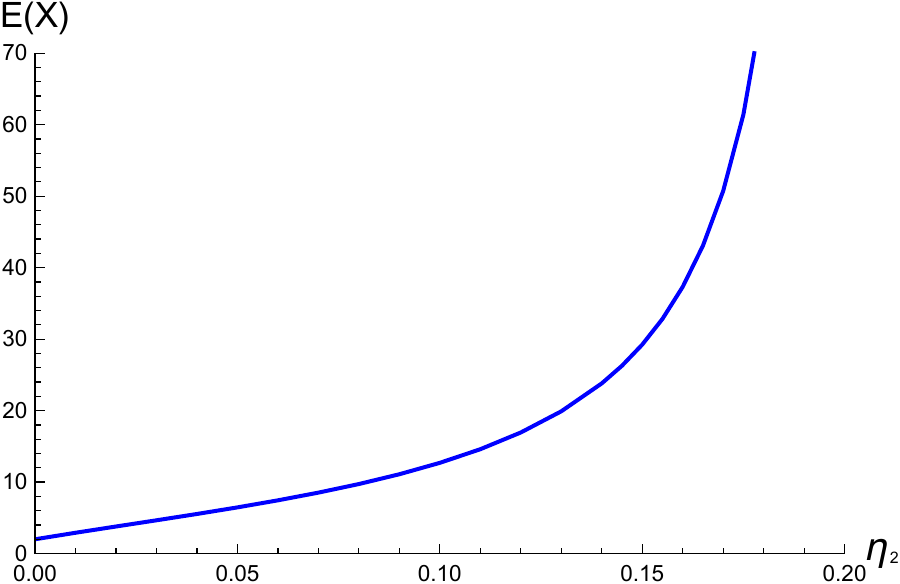}\\
\caption{For $\lambda_1^*=1$, $\mu_1^*=0.5$, $\lambda_2^*=1$, $\mu_2^*=2$, $\eta_1 = 0.1$, $\omega_1^2=1$, $\omega_2^2=4$
and $0\leq\eta_2 <0.2$, the  conditional means $\mathbb{E}[X|\mathscr{E}=i]$, given in  (\ref{cond_mean_cont}), are plotted on the left
for $i=1$ (top) and $i=2$ (bottom), whereas the mean $\mathbb{E}(X)$ is plotted on the right.
}
\label{fig13}
\end{figure} 
%
\begin{figure}[thb]  
\centering
\includegraphics[scale=0.6]{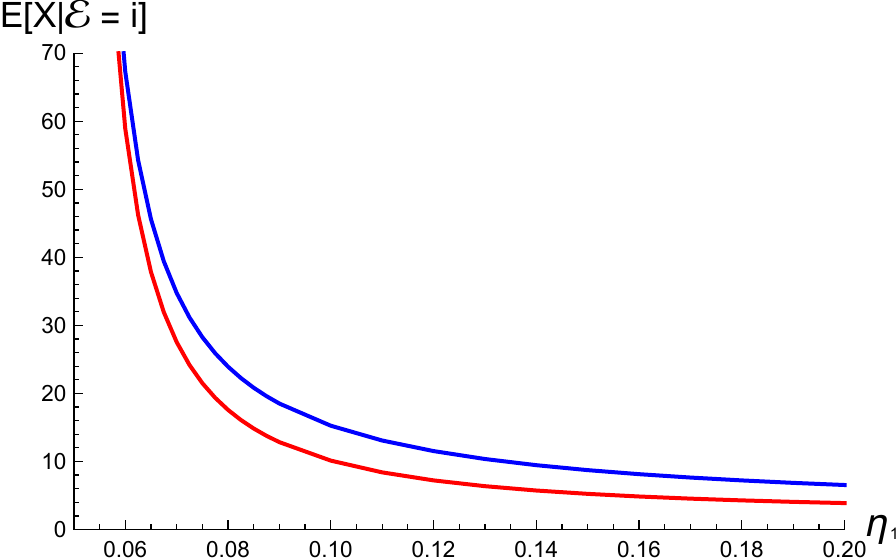}
\hspace*{4mm}
\includegraphics[scale=0.6]{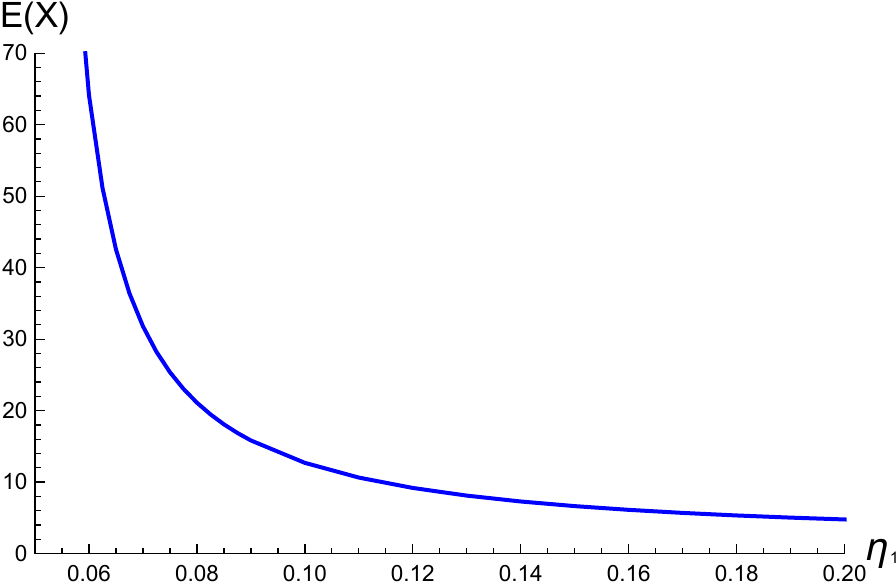}\\
\caption{For $\lambda_1^*=1$, $\mu_1^*=0.5$, $\lambda_2^*=1$, $\mu_2^*=2$, $\eta_2 = 0.1$, $\omega_1^2=1$, $\omega_2^2=4$
and $\eta_1>0.05$, the  conditional means $\mathbb{E}[X|\mathscr{E}=i]$, given in  (\ref{cond_mean_cont}), are plotted on the left
for $i=1$ (top) and $i=2$ (bottom), whereas the mean $\mathbb{E}(X)$ is plotted on the right.}
\label{fig14}
\end{figure} 
Figure~\ref{fig13} gives, on the left, a plot of the conditional means, obtained in (\ref{cond_mean_cont}),  
for a suitable choice of the parameters, showing that $\mathbb{E}[X|\mathscr{E}=i]$ is increasing in $\eta_2$, for $i=1,2$. Furthermore, 
on the right of Figure~\ref{fig13} is plotted $\mathbb{E}(X)$ for the same choices of parameters. Similarly, in Figure~\ref{fig14} 
$\mathbb{E}[X|\mathscr{E}=i]$ (on the left) and $\mathbb{E}(X)$ (on the left) are plotted for the same 
choices of parameters, showing that they are decreasing in $\eta_1$.  
\par
\begin{figure}[thb]  
\centering
\includegraphics[scale=0.6]{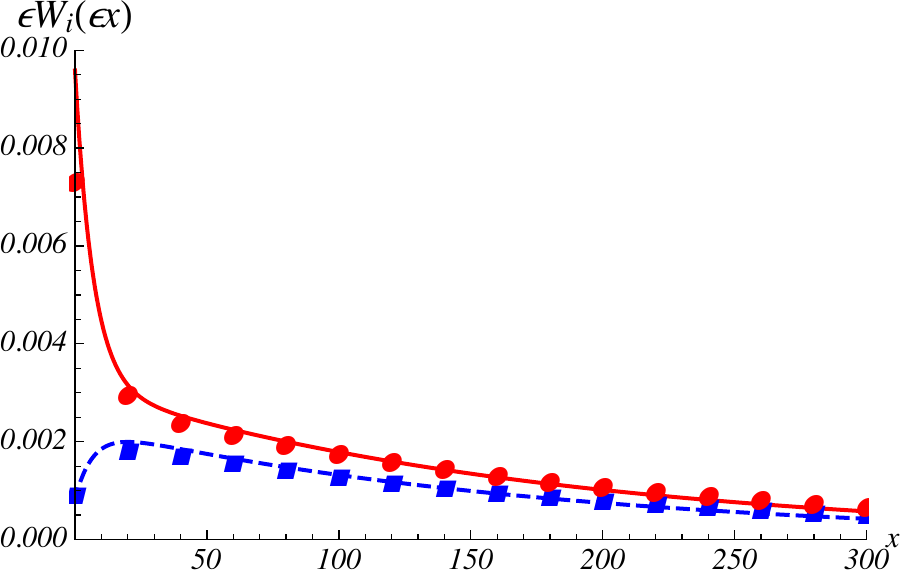}
\hspace*{4mm}
\includegraphics[scale=0.6]{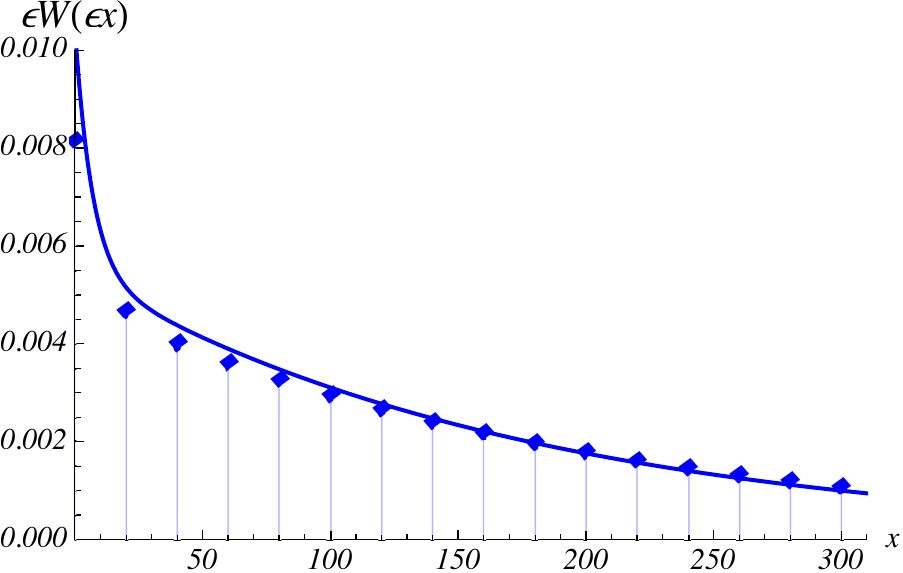}\\
\caption{For $\lambda_1^*=1$, $\mu_1^*=0.5$, $\lambda_2^*=0.8$, $\mu_2^*=1.2$,  $\eta_1 = 0.6$, $\eta_2 = 0.4$, $\omega_1^2=0.2$, $\omega_2^2=0.4$, on the left 
the  functions $\epsilon\,W_i(\epsilon x)$ are compared with the probabilities $q_{n,i}$ for $i=1$ (square) and $i=2$ (circle), where $\lambda_1,\mu_1,\lambda_2,\mu_2$ 
 are given in (\ref{eq:rates}) with $\epsilon=0.05$. On the right, for the same choices, the  function $\epsilon\,W(\epsilon x)$ is compared with the probabilities $q_n$  (diamond).}
\label{fig15}
\end{figure} 
%
\begin{figure}[thb]  
\centering
\includegraphics[scale=0.6]{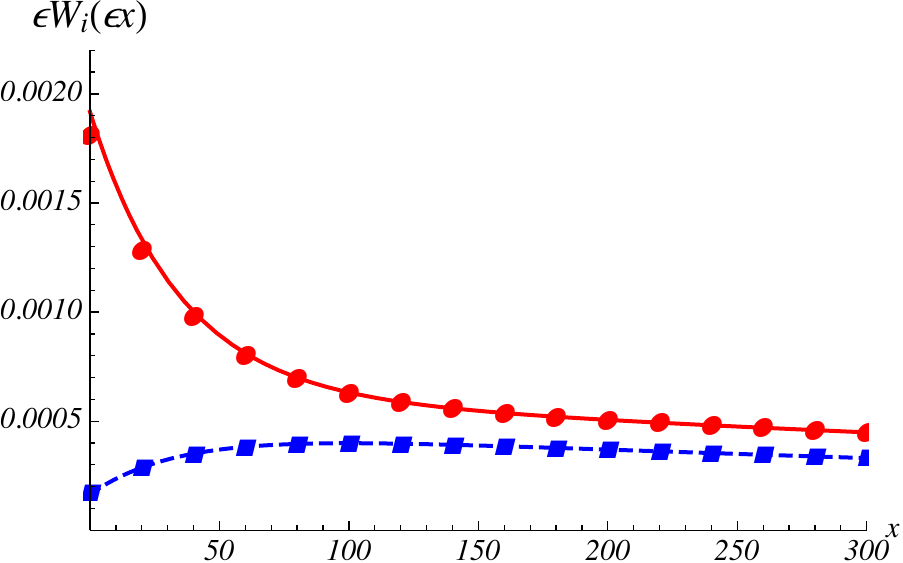}
\hspace*{4mm}
\includegraphics[scale=0.6]{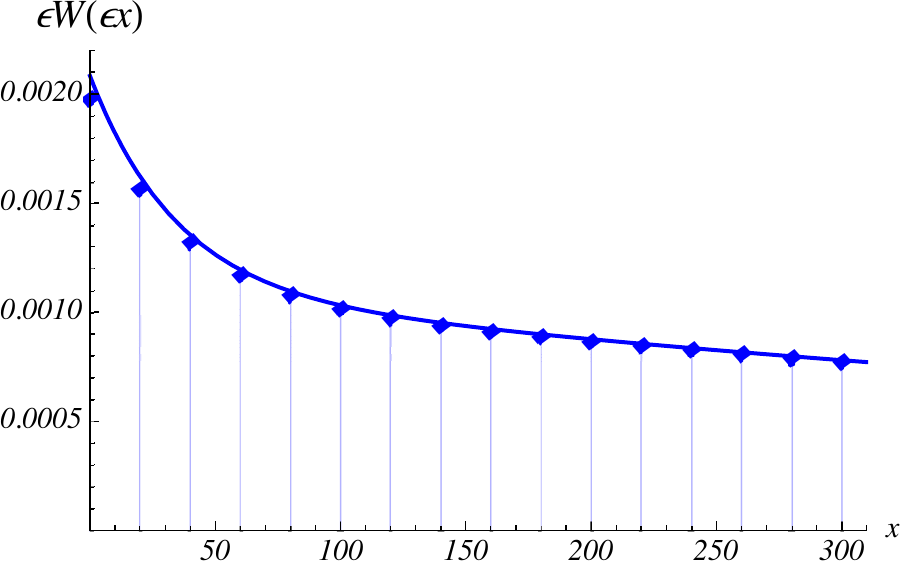}\\
\caption{As in Figure~\ref{fig15}, with $\epsilon=0.01$.}
\label{fig16}
\end{figure} 
\par
To show the validity tof the approximating procedure given in Section~\ref{section4},  in Figures~\ref{fig15}(a) and \ref{fig16}(a),  
we compare the  functions $\epsilon\,W_i(\epsilon x)$, where $W_i(x)$ is given in  
(\ref{steady_state_density_i}), with the probabilities $q_{n,i}$, given in  (\ref{mixture_discrete_environments}),  for $\epsilon=0.05$ and $\epsilon=0.01$, 
respectively. Furthermore, in Figures~\ref{fig15}(b) and \ref{fig16}(b),  we compare the  functions $\epsilon\,W(\epsilon x)
=\epsilon\,W_1(\epsilon x)+\epsilon\,W_2(\epsilon x)$, where $W(x)$ is given in  (\ref{mixture_continue_system}), with the probabilities $q_n$, given in  
(\ref{mixture_discrete_system}),  for $\epsilon=0.05$ and $\epsilon=0.01$, respectively. The probabilities $q_{n,1}$ (square), the probabilities $q_{n,1}$ (circle) 
and the probabilities $q_n$ (diamond) are represented for $n=20\,k$ $(k=0,1,\ldots,15)$. According to (\ref{eq:rates}), in  Figures~\ref{fig15} we set   
$\lambda_1=60$, $\mu_1=50$, $\lambda_2=96$, $\mu_2=104$, whereas in  Figures~\ref{fig16} one has 
$\lambda_1=1100$, $\mu_1=1050$, $\lambda_2=2080$, $\mu_2=2120$. From Figures~\ref{fig15} and \ref{fig16}, we note that 
the goodness of the diffusion approximation for the steady-state probabilities improves as $\epsilon$ decreases, due to an increase of traffic in the queueing system. 
%
\section{Analysis of the  diffusion process for $\eta_2=0$}\label{section5}
Let us now analyze the transient behaviour of the process ${\bf X}(t)=[X(t),\mathscr{E}(t)]$ 
in the case $\eta_2=0$, with the initial state specified in (\ref{eq:initcondit_X}).
\subsection{Probability densities}
Similarly as in the discrete model, hereafter we express the probability densities (\ref{eq:densfjxt}) 
in terms of the transition densities $\widehat{r}^{(i)}(x,t|y)$ of two Wiener processes $\widehat X^{(i)}(t)$, characterized by drift $\beta_i=\lambda_i^*-\mu_i^*$ and infinitesimal 
variance $\omega_i^2$, $i=1,2$, restricted to $[0,+\infty)$, with $0$ reflecting boundary, given by (cf.\ \cite{CoxMiller_1970}) 
\begin{eqnarray}
&&\hspace*{-2.3cm}\widehat{r}^{(i)}(x,t|y)={1\over\sqrt{2\pi \omega_i^2\,t}}\Biggl[ \exp\biggl\{-{(x-y-\beta_i t)^2\over 2\,\omega_i^2\, t}\biggr\}
  \nonumber\\
&&\hspace*{-0.6cm}
 +\exp\biggl\{-{2\beta_i\,y\over\omega_i^2}\biggr\}
 \exp\biggl\{ -{(x+y-\beta_i t)^2\over 2\,\omega_i^2\, t}\biggr\}\Biggr]
   \nonumber\\
&&\hspace*{-0.6cm} -{\beta_i\over\omega_i^2}\,\exp\biggl\{  {2\,\beta_i\, x\over\omega_i^2} \biggr\}\,{\rm Erfc} 
 \biggl( {x+y+\beta_i t\over \sqrt{2\, \omega_i^2\,t}}\biggr), \qquad x,y\in\mathbb{R}_0^+,
\label{eq:densita_Wiener}
\end{eqnarray}
where ${\rm Erfc}(x)=(2/\sqrt{\pi})\int_x^{+\infty}e^{-z^2}\,dz$ denotes the complementary error function. 
\begin{proposition}
Let $\eta_2=0$. For all $t\geq 0$ and $y,x\in\mathbb{R}^+_0$, the probability densities (\ref{eq:densfjxt}) satisfy
\begin{eqnarray}
&&\hspace*{-1.0cm} f_1(x,t)  =  p \,e^{-\eta_1 t}\, \widehat r^{(1)}(x,t\,|\,y),
  \label{eq:reldensf1}
\\
&&\hspace*{-1.0cm}f_2(x,t) =  (1-p)\, \widehat r^{(2)}(x,t\,|\,y)
 \nonumber \\
 &&\hspace*{0.5cm} +  p\,\eta_1 \int_0^{+\infty} dz \int_0^t \widehat r^{(1)}(z,\tau\,|\,y)\,  e^{-\eta_1\tau}\,
 \widehat r^{(2)}(x,t-\tau\,|\,z)\; d\tau,
 \label{eq:reldensf2}
\end{eqnarray}
where $\widehat r^{(i)}(x,t|y)$ are provided in (\ref{eq:densita_Wiener}). 
\end{proposition}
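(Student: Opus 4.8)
The plan is to proceed exactly as in the discrete counterpart that yields Eqs.~(\ref{eq:formulapn1})--(\ref{eq:formulapn2}), namely to verify that the right-hand sides of (\ref{eq:reldensf1})--(\ref{eq:reldensf2}) solve the initial-boundary value problem consisting of the Kolmogorov equations (\ref{eqKolmogorov}) with $\eta_2=0$, the reflecting conditions (\ref{eq:reflecting}), and the dichotomous initial conditions (\ref{initial_condition_cont}); uniqueness of that solution then gives the assertion. Throughout I shall use the defining properties of the restricted Wiener density $\widehat r^{(i)}(x,t\,|\,y)$ given in (\ref{eq:densita_Wiener}): for each fixed $y$ it satisfies the Fokker--Planck equation $\partial_t \widehat r^{(i)}=-\beta_i\partial_x\widehat r^{(i)}+(\omega_i^2/2)\partial_{xx}\widehat r^{(i)}$ on $\mathbb R^+$ with $\beta_i=\lambda_i^*-\mu_i^*$, the reflecting condition $\lim_{x\to0^+}[\beta_i\widehat r^{(i)}-(\omega_i^2/2)\partial_x\widehat r^{(i)}]=0$, the initial value $\widehat r^{(i)}(x,0\,|\,y)=\delta(x-y)$, and the Chapman--Kolmogorov identity $\int_0^{+\infty}\widehat r^{(i)}(z,s\,|\,y)\widehat r^{(i)}(x,t\,|\,z)\,dz=\widehat r^{(i)}(x,s+t\,|\,y)$.

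For (\ref{eq:reldensf1}) the verification is immediate: when $\eta_2=0$ the equation (\ref{eqKolmogorov}) for $i=1$ decouples from $f_2$ and reads $\partial_t f_1=-\beta_1\partial_x f_1+(\omega_1^2/2)\partial_{xx}f_1-\eta_1 f_1$, and $p\,e^{-\eta_1 t}\widehat r^{(1)}(x,t\,|\,y)$ satisfies it, together with the reflecting condition and the initial value $p\,\delta(x-y)$, since multiplication by the smooth factor $p\,e^{-\eta_1 t}$ commutes with $\partial_x$ and with $\lim_{x\to0^+}$ and produces precisely the extra $-\eta_1 f_1$ term upon differentiating in $t$.

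The substance lies in (\ref{eq:reldensf2}). I would write $f_2=(1-p)\widehat r^{(2)}(x,t\,|\,y)+g(x,t)$ with $g(x,t)=p\,\eta_1\int_0^t e^{-\eta_1\tau}h(x,t;\tau)\,d\tau$ and $h(x,t;\tau)=\int_0^{+\infty}\widehat r^{(1)}(z,\tau\,|\,y)\,\widehat r^{(2)}(x,t-\tau\,|\,z)\,dz$. The first summand solves the homogeneous part of (\ref{eqKolmogorov}) for $i=2$ with $\eta_2=0$, obeys the reflecting condition, and tends to $(1-p)\delta(x-y)$ as $t\downarrow0$. For $g$ I differentiate under the integral sign: the interior term gives $p\,\eta_1\int_0^t e^{-\eta_1\tau}\partial_t h(x,t;\tau)\,d\tau$, and since $h(\cdot,\cdot;\tau)$ viewed in the variables $(x,t-\tau)$ satisfies the environment-$2$ Fokker--Planck equation this equals $[-\beta_2\partial_x+(\omega_2^2/2)\partial_{xx}]g$; the endpoint term from $\tau=t$ is $p\,\eta_1 e^{-\eta_1 t}h(x,t;t)$, and here the key identity $\widehat r^{(2)}(x,0\,|\,z)=\delta(x-z)$ collapses the $z$-integral, so $h(x,t;t)=\widehat r^{(1)}(x,t\,|\,y)$ and the endpoint term equals exactly $\eta_1 f_1(x,t)$, i.e.\ the forcing term in (\ref{eqKolmogorov}). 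The reflecting condition (\ref{eq:reflecting}) for $g$ follows because $\widehat r^{(2)}(x,t-\tau\,|\,z)$ satisfies it in $x$ for every $z$ and $\lim_{x\to0^+}$ and $\partial_x$ pass through the $(z,\tau)$-integrals; and $g(x,t)\to0$ as $t\downarrow0$ because of the vanishing range $\int_0^t$. Collecting the pieces reproduces (\ref{eqKolmogorov}) with $\eta_2=0$, the reflecting conditions (\ref{eq:reflecting}), and the initial data (\ref{initial_condition_cont}), as required.

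I expect the only delicate point to be the justification of the term-by-term differentiation of the double integral defining $g$, and in particular the appearance of the forcing $\eta_1 f_1$ as the Leibniz endpoint contribution through the singular initial value $\widehat r^{(2)}(x,0\,|\,z)=\delta(x-z)$. This is the continuous analogue of the probabilistic decomposition underlying (\ref{eq:formulapn2}) — no switch up to time $t$ (an $\mathrm{Exp}(\eta_1)$ event), versus a single switch at some $\tau\in(0,t)$ with the process at level $z$ at that instant — and that heuristic can be invoked both to motivate and to cross-check the formula.
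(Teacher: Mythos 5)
Your verification is correct and is exactly the argument the paper intends: its proof of this proposition is the one-line statement that the formulas follow from (\ref{eqKolmogorov}) together with the reflecting conditions (\ref{eq:reflecting}) and the initial conditions (\ref{initial_condition_cont}), which is precisely the substitution-and-check you carry out, including the Leibniz endpoint term that produces the forcing $\eta_1 f_1$.
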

\begin{proof} It follows from  (\ref{eqKolmogorov}), taking into account the boundary conditions (\ref{eq:reflecting}) and 
the initial conditions (\ref{initial_condition_cont}).\hfill\fine
\end{proof}
The probabilistic interpretation of (\ref{eq:reldensf1}) and (\ref{eq:reldensf2}) is similar to the discrete queueing model.
%
\subsection{First-passage time problem}
We  consider the first-passage time through $(0,1)$ or $(0,2)$ states when $\eta_2=0$. To this purpose, we define a  two-dimensional 
stochastic process $\{ \widetilde {\bf X}(t)=[\widetilde X(t),\mathscr{E}(t)],t\geq 0\}$,  obtained from ${\bf X}(t)$ by removing all the 
transitions from  $(0,1)$ and $(0,2)$. We assume that $\widetilde {\bf X}(0)=(y,1)$ with probability $p$ and $\widetilde {\bf X}(0)=(y,2)$ 
with probability $1-p$, being $y\in\mathbb{R}^+$. 
Similarly to the discrete queueing model, only transitions from the first to the second environment are allowed. 
Hence, for $y\in\mathbb{R}^+$, denoting by
\begin{equation}
 h_i(x,t|y)={d\over dt}\mathbb{P}\{\widetilde X(t)<x,\mathscr{E}(t)=i\},
 \qquad x\in\mathbb{R}^+_0,\; i=1,2, \; t\geq 0
 \label{eq:denshixt}
\end{equation}
the transition density of the process $\widetilde {\bf X}(t)$, one has:
\begin{eqnarray}
&&\hspace*{-1.5cm}{\partial h_1(x,t|y)\over \partial t}
 = -(\lambda_1^*-\mu_1^*){\partial h_1(x,t|y)\over \partial x}
 +{\omega_1^2\over 2} {\partial^2 h_1(x,t|y)\over \partial x^2}- \eta_1 h_1(x,t|y),\nonumber\\
&& \label{eq_abs_cont}\\
 &&\hspace*{-1.5cm}{\partial h_2(x,t|y)\over \partial t}
 = -(\lambda_2^*-\mu_2^*){\partial h_2(x,t|y)\over \partial x}
 +{\omega_2^2\over 2} {\partial^2 h_2(x,t|y)\over  \partial x^2}
+\eta_1 h_1(x,t|y),\nonumber
\end{eqnarray}
with the absorbing boundary conditions 
\begin{equation}
\lim_{x\downarrow 0} h_i(x,t|y)=0,\qquad i=1,2
 \label{bound_condition_abs_cont}
\end{equation}
and the initial conditions 
\begin{equation}
 \lim_{t\downarrow 0}h_1(x,t|y)=p\, \delta(x-y), \quad \lim_{t\downarrow 0}h_{2}(x,t|y)
 =(1-p)\,\delta(x-y), \qquad 0\leq p\leq 1.
 \label{initial_condition_abs_cont}
\end{equation}
\par
Hereafter we express the transition densities (\ref{eq:denshixt}) in terms of the probability densities of the Wiener processes  $\widehat X^{(i)}(t)$
in the presence of an absorbing boundary in the zero state for $x,y\in\mathbb{R}^+$, which is given by (cf.\ \cite{CoxMiller_1970}) 
\begin{eqnarray}
 && \hspace{-1cm} \widehat \alpha^{(i)}(x,t|y)={1\over\sqrt{2\,\pi\,\omega_i^2\, t}}\Biggl[ \exp\biggl\{-{(x-y-\beta_i\, t)^2\over 2\,\omega_i^2\, t}\biggr\}
 \nonumber \\
  && \hspace{1cm}
 -\exp\biggl\{-{2\beta_i\,y\over\omega_i^2}\biggr\}
 \exp\biggl\{- {(x+y-\beta_i\, t)^2\over 2\,\omega_i^2\, t}\biggr\}\Biggr], \qquad t>0.
\label{abs_dens_Wiener}
\end{eqnarray} 
%
\begin{proposition}\label{prop_dens_abs_cont}
If $\eta_2=0$, for $y\in\mathbb{R}^+$, $x\in\mathbb{R}^+_0$ and $t>0$, the transition densities (\ref{eq:denshixt}) can be expressed as:
\begin{eqnarray}
&&\hspace*{-1.5cm}h_1(x,t|y)=p\, e^{-\eta_1 t} \,\widehat \alpha^{(1)}(x,t|y),
\label{eq:absdens_gen1}\\
&&\hspace*{-1.5cm}h_2(x,t|y)=(1-p)\,  \widehat \alpha^{(2)}(x,t|y)\nonumber\\
&&+  \eta_1 p \int_0^{\infty}dz\int_0^t e^{-\eta_1 \tau} 
\,\widehat \alpha^{(1)}(z,\tau|y)\,\widehat \alpha^{(2)}(x,t-\tau|z)\; d\tau,
\label{eq:absdens_gen2}
\end{eqnarray}
where $\widehat \alpha^{(i)}(x,t|y)$ are provided in (\ref{abs_dens_Wiener}).
\end{proposition}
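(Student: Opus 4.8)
The plan is to verify that the expressions in (\ref{eq:absdens_gen1}) and (\ref{eq:absdens_gen2}) solve the initial-boundary value problem consisting of the system (\ref{eq_abs_cont}), the absorbing conditions (\ref{bound_condition_abs_cont}), and the initial conditions (\ref{initial_condition_abs_cont}); uniqueness for this parabolic system then gives the result. First I would recall that each $\widehat\alpha^{(i)}(x,t|y)$ is, by construction, the transition density of the Wiener process with drift $\beta_i=\lambda_i^*-\mu_i^*$ and variance $\omega_i^2$ killed at $0$, so it satisfies
\begin{equation*}
{\partial \widehat\alpha^{(i)}\over\partial t}=-\beta_i{\partial\widehat\alpha^{(i)}\over\partial x}+{\omega_i^2\over 2}{\partial^2\widehat\alpha^{(i)}\over\partial x^2},
\qquad \lim_{x\downarrow 0}\widehat\alpha^{(i)}(x,t|y)=0,
\qquad \lim_{t\downarrow 0}\widehat\alpha^{(i)}(x,t|y)=\delta(x-y).
\end{equation*}

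For the first equation of (\ref{eq_abs_cont}): substituting $h_1=p\,e^{-\eta_1 t}\widehat\alpha^{(1)}(x,t|y)$, the factor $e^{-\eta_1 t}$ produces exactly the $-\eta_1 h_1$ term, while the spatial derivatives act only on $\widehat\alpha^{(1)}$, reproducing the diffusive operator; the boundary condition at $0$ is inherited from that of $\widehat\alpha^{(1)}$, and as $t\downarrow 0$ the exponential tends to $1$, giving $p\,\delta(x-y)$. This handles (\ref{eq:absdens_gen1}) immediately.

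For the second equation, I would write $h_2=(1-p)\,\widehat\alpha^{(2)}(x,t|y)+p\,\eta_1 K(x,t)$ where $K(x,t)=\int_0^\infty dz\int_0^t e^{-\eta_1\tau}\widehat\alpha^{(1)}(z,\tau|y)\,\widehat\alpha^{(2)}(x,t-\tau|z)\,d\tau$. The first summand solves the homogeneous ($\eta_1=0$) version of the second equation with initial mass $1-p$ and vanishes at $x=0$. For $K$, I would differentiate under the integral sign in $t$: the upper limit $\tau=t$ contributes $e^{-\eta_1 t}\int_0^\infty \widehat\alpha^{(1)}(z,t|y)\,\widehat\alpha^{(2)}(x,0^+|z)\,dz=e^{-\eta_1 t}\widehat\alpha^{(1)}(x,t|y)$, by the delta-initial condition of $\widehat\alpha^{(2)}$, and this is precisely $h_1/(p)$, so $p\,\eta_1$ times it yields the coupling term $\eta_1 h_1(x,t|y)$. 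The remaining $\partial_t$ acting inside hits $\widehat\alpha^{(2)}(x,t-\tau|z)$, which satisfies its own diffusion equation in $(x,t)$, and this reproduces the operator $-(\lambda_2^*-\mu_2^*)\partial_x+(\omega_2^2/2)\partial_{xx}$ applied to $K$. The absorbing condition $\lim_{x\downarrow 0}K(x,t)=0$ follows from $\lim_{x\downarrow 0}\widehat\alpha^{(2)}(x,t-\tau|z)=0$ under the integral (dominated convergence), and $K(x,t)\to 0$ as $t\downarrow 0$ since the $\tau$-integral is over a shrinking interval; hence $h_2\to(1-p)\delta(x-y)$ as required.

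The main obstacle I anticipate is the rigorous justification of differentiating $K$ under the integral sign and of the boundary-value interchange near $\tau=t$ and near $x=0$: the kernel $\widehat\alpha^{(2)}(x,t-\tau|z)$ has a singularity as $\tau\uparrow t$, so one must argue via the standard heat-kernel bounds (or via Laplace transforms in $t$, which convert the convolution into a product and sidestep the singularity) that the interchange is legitimate and that $K$ is genuinely $C^{2,1}$ on $\mathbb{R}^+\times(0,\infty)$. Once the formal identities are established, uniqueness of bounded nonnegative solutions of the parabolic system (\ref{eq_abs_cont})–(\ref{initial_condition_abs_cont}) — which holds by a maximum-principle argument for the weakly coupled triangular system, since the $\eta_1 h_1$ coupling is one-directional — closes the proof. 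Alternatively, and perhaps more cleanly, I would derive (\ref{eq:absdens_gen1})–(\ref{eq:absdens_gen2}) by the same probabilistic decomposition used for the discrete model: since $\eta_2=0$ at most one environment switch occurs, at an $\mathrm{Exp}(\eta_1)$ time $\tau$, and conditioning on $\tau$ together with the strong Markov property at the switching instant yields exactly the stated integral representations, with the killed Wiener densities $\widehat\alpha^{(i)}$ playing the role of $\widehat\alpha^{(i)}_{j,n}$.
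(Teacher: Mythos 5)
Your proposal is correct and follows essentially the same route as the paper, whose proof is simply the one-line assertion that the result follows from (\ref{eq_abs_cont}) together with the boundary conditions (\ref{bound_condition_abs_cont}) and initial conditions (\ref{initial_condition_abs_cont}); your verification by substitution (including the correct identification of the $\tau=t$ boundary term with the coupling term $\eta_1 h_1$) is a fleshed-out version of exactly that argument, and your alternative probabilistic decomposition mirrors the interpretation the paper gives for the analogous discrete Proposition~\ref{prop_gamma}.
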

\begin{proof}
It follows from  (\ref{eq_abs_cont}), taking into account    the absorbing boundary conditions (\ref{bound_condition_abs_cont}) 
and the initial conditions (\ref{initial_condition_abs_cont}).\hfill\fine
\end{proof}
We note that Eqs.~(\ref{eq:absdens_gen1}) and (\ref{eq:absdens_gen2}) 
are similar to (\ref{eq:absprob_gen1}) and (\ref{eq:absprob_gen2}) for the discrete queueing model.
\par
For $y\in\mathbb{R}^+$, let 
$$
 {\cal T}_y=\inf\{t>0:  {\bf X}(t)=(0,1)\;{\rm or}\; {\bf X}(t)=(0,2)\}, 
$$ 
be the  FPT through zero  for ${\bf X}(t)$ starting from $(y,1)$ with probability $p$ 
and from $(y,2)$ with probability $1-p$. We note that
\begin{equation}
 \mathbb{P}({\cal T}_y<t)+\int_0^{+\infty}\bigl[ h_1(x,t|y)+h_2(x,t|y)\bigr]\;dx=1.
\label{abs_first_passage_cont}
\end{equation}
Hereafter we focus on the FPT probability density   
\begin{equation}
k(0,t|y)={d\over dt}\mathbb{P}({\cal T}_y<t),\qquad t>0,\; y\in\mathbb{R}^+.
\label{def_FPT_dens_cont}
\end{equation}
Specifically,  we express such density in terms of the  FPT densities from state $y$ to state $x$ 
for the Wiener processes $\widehat X^{(i)}(t)$, given by 
\begin{equation}
 \widehat g^{(i)}(x,t|y)={y-x\over\sqrt{2\,\pi\,\omega_i^2\, t^3}} \exp\biggl\{-{(x-y-\beta_i\, t)^2\over 2\,\omega_i^2\, t}\biggr\}, 
 \qquad 0\leq x<y.
 \label{FPT_density_Wiener}
\end{equation}
%
\begin{proposition}
If $\eta_2=0$ and $y\in\mathbb{R}^+$, for $t>0$ the FPT density (\ref{def_FPT_dens_cont}) can be 
expressed as  
\begin{eqnarray}
&&\hspace*{-1.8cm}k(0,t|y)=p\, e^{-\eta_1 t} \,\widehat g^{(1)}(0,t|y)+(1-p)\,  \widehat g^{(2)}(0,t|y)\nonumber\\
&&+  \eta_1 p \int_0^{+\infty}dz\int_0^t e^{-\eta_1 \tau} 
\,\widehat \alpha^{(1)}(z,\tau|y)\,\widehat g^{(2)}(0,t-\tau|z)\; d\tau,
\label{density_FPT_cont}
\end{eqnarray}
where $g^{(i)}(0,t|y)$ are provided in (\ref{FPT_density_Wiener}).
\end{proposition}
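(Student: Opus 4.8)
The plan is to mimic exactly the structure used in the three previous propositions of this section (and the analogous one in the discrete setting, Proposition~\ref{prop_gamma}), deriving the FPT density from the transition densities of the killed process $\widetilde{\bf X}(t)$ obtained in Proposition~\ref{prop_dens_abs_cont}. First I would integrate the two equations in (\ref{eq_abs_cont}) over $x\in(0,+\infty)$. Using the absorbing boundary conditions (\ref{bound_condition_abs_cont}) and the fact that the densities and their spatial derivatives vanish as $x\to+\infty$, the diffusion terms integrate to boundary contributions; specifically $\int_0^\infty \partial_x h_i\,dx = -\lim_{x\downarrow 0}h_i(x,t|y)=0$ by (\ref{bound_condition_abs_cont}), while $\int_0^\infty \partial_x^2 h_i\,dx = -\lim_{x\downarrow 0}\partial_x h_i(x,t|y)$, which is the probability flux into the absorbing state. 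This yields
\begin{equation*}
{d\over dt}\int_0^{+\infty} h_1(x,t|y)\,dx = -{\omega_1^2\over 2}\lim_{x\downarrow 0}{\partial h_1(x,t|y)\over\partial x} - \eta_1\int_0^{+\infty} h_1(x,t|y)\,dx,
\end{equation*}
and the analogous identity for $h_2$ with an extra $+\eta_1\int_0^\infty h_1\,dx$ term and no killing term. Summing the two and invoking (\ref{abs_first_passage_cont}) and (\ref{def_FPT_dens_cont}), the $\eta_1$ contributions cancel and one obtains the flux representation
\begin{equation*}
k(0,t|y)=-{\omega_1^2\over 2}\lim_{x\downarrow 0}{\partial h_1(x,t|y)\over\partial x}-{\omega_2^2\over 2}\lim_{x\downarrow 0}{\partial h_2(x,t|y)\over\partial x},\qquad t>0.
\end{equation*}

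Next I would substitute the explicit forms (\ref{eq:absdens_gen1}) and (\ref{eq:absdens_gen2}) for $h_1,h_2$ into this flux formula. The key analytic fact is the classical relation between the absorption density and the FPT density of a Wiener process restricted by an absorbing barrier at $0$: differentiating (\ref{abs_dens_Wiener}) and taking $x\downarrow 0$ gives $-\tfrac{\omega_i^2}{2}\lim_{x\downarrow 0}\partial_x\widehat\alpha^{(i)}(x,t|z)=\widehat g^{(i)}(0,t|z)$, with $\widehat g^{(i)}$ as in (\ref{FPT_density_Wiener}). Applying this to the first term of each $h_i$ produces $p\,e^{-\eta_1 t}\widehat g^{(1)}(0,t|y)$ and $(1-p)\widehat g^{(2)}(0,t|y)$, and applying it inside the convolution term of (\ref{eq:absdens_gen2}) — where the $z$-integration and $\tau$-integration are over fixed ranges and commute with the $x\downarrow 0$ limit — produces the double integral $\eta_1 p\int_0^{+\infty}dz\int_0^t e^{-\eta_1\tau}\widehat\alpha^{(1)}(z,\tau|y)\widehat g^{(2)}(0,t-\tau|z)\,d\tau$. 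Collecting the three pieces is precisely (\ref{density_FPT_cont}).

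The main obstacle is the justification of interchanging the limit $x\downarrow 0$ with the iterated integrals $\int_0^{+\infty}dz\int_0^t d\tau$ in the convolution term, and likewise differentiation under the integral sign; I would handle this by dominated convergence, noting that for $\tau$ bounded away from $0$ and $t$ the integrand and its $x$-derivative are jointly continuous and integrable in $(z,\tau)$ uniformly for $x$ in a neighbourhood of $0$, while the integrable singularities of $\widehat\alpha^{(1)}$ at $\tau\to 0^+$ and of $\widehat g^{(2)}(0,t-\tau|z)$ at $\tau\to t^-$ are controlled by standard Gaussian-tail estimates. Since the paper's style treats such verifications as routine (the proof of Proposition~\ref{prop_dens_abs_cont} itself simply asserts the result follows from the equations and boundary conditions), I would similarly state that (\ref{density_FPT_cont}) follows from the flux identity together with $\mu_i\,$-free analogue $-\tfrac{\omega_i^2}{2}\partial_x\widehat\alpha^{(i)}|_{x=0}=\widehat g^{(i)}(0,\cdot|\cdot)$ and Proposition~\ref{prop_dens_abs_cont}, with the probabilistic interpretation (start in environment~$1$ and reach $0$ with no switch; start in environment~$2$ and reach $0$; start in environment~$1$, diffuse to level $z$ without absorption, switch at time $\tau$, then reach $0$ from $z$ in environment~$2$) mirroring the decomposition given after (\ref{eq:BPdensity_gen}) in the discrete case.
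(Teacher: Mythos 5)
Your proof follows exactly the paper's route: integrate the Kolmogorov system (\ref{eq_abs_cont}) over $x\in(0,+\infty)$, use (\ref{abs_first_passage_cont}) and (\ref{def_FPT_dens_cont}) to obtain a boundary-flux representation of $k(0,t|y)$, and then convert the boundary derivatives of $\widehat\alpha^{(i)}$ into the FPT densities $\widehat g^{(i)}$ inside the decomposition of Proposition~\ref{prop_dens_abs_cont}. The only defect is a pair of compensating sign errors. Since $k(0,t|y)=-\frac{d}{dt}\int_0^{+\infty}\bigl[h_1(x,t|y)+h_2(x,t|y)\bigr]\,dx$, the correct flux identity carries plus signs,
\begin{equation*}
k(0,t|y)=\frac{\omega_1^2}{2}\lim_{x\downarrow 0}\frac{\partial h_1(x,t|y)}{\partial x}
+\frac{\omega_2^2}{2}\lim_{x\downarrow 0}\frac{\partial h_2(x,t|y)}{\partial x},
\end{equation*}
not the minus signs you wrote; correspondingly, because $\widehat\alpha^{(i)}(\cdot,t|z)$ is nonnegative and vanishes at $x=0$, its right derivative there is nonnegative, and the boundary relation is $\frac{\omega_i^2}{2}\lim_{x\downarrow 0}\partial_x\widehat\alpha^{(i)}(x,t|z)=\widehat g^{(i)}(0,t|z)$ (your version would force a negative derivative). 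The two slips cancel, so your final formula (\ref{density_FPT_cont}) is correct, and the dominated-convergence justification you supply for exchanging the limit with the convolution integrals is a point the paper passes over in silence.
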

\begin{proof} Making use of (\ref{eq_abs_cont}), (\ref{bound_condition_abs_cont}) and (\ref{initial_condition_abs_cont}), for $i=1,2$ one has
$$
{d\over dt}\int_0^{+\infty} h_i(x,t|y)\; dx=-{\omega_i^2\over 2}\lim_{x\downarrow 0} {\partial\over\partial x}h_i(x,t|y)+(-1)^i\eta_i\int_0^{+\infty}
h_1(x,t|y)\;dx,
$$
so that, from (\ref{abs_first_passage_cont}) and  (\ref{def_FPT_dens_cont}) it follows:
\begin{equation}
k(0,t|y)={\omega_1^2\over 2}\lim_{x\downarrow 0}{\partial h_1(x,t|y)\over\partial x}+{\omega_2^2\over 2}\lim_{x\downarrow 0}{\partial h_2(x,t|y)\over\partial x}\cdot
 \label{density_FPT_1_cont}
\end{equation}
Recalling (\ref{abs_dens_Wiener}) and (\ref{FPT_density_Wiener}), for $y\in\mathbb{R}^+$ one has
$$
\lim_{x\downarrow 0}{\partial\over \partial x}\widehat \alpha^{(i)}(x,t|y)=2\,\omega_i^2\,\widehat g^{(i)}(0,t|y),\qquad i=1,2.
$$
Hence, (\ref{density_FPT_cont}) immediately follows from (\ref{eq:absdens_gen1}), (\ref{eq:absdens_gen2}) and
(\ref{density_FPT_1_cont}).\hfill\fine
\end{proof}
We note the high analogy in the FPT density $k(0,t|y)$, given in (\ref{density_FPT_cont}), with  the FPT density $b_j(t)$, 
given in (\ref{eq:BPdensity_gen}), of the discrete queueing model.
\par
Let 
$$
K(s|y)  = {\cal L}[k(0,t|y)]=\int_0^{+\infty}e^{-s\,t}k(0,t|y)\;dt,\qquad s>0,y\in\mathbb{R}^+
$$
be the Laplace transform of the FPT density $k(0,t|y)$.
%
\begin{proposition} For $s>0$ and $y\in\mathbb{R}^+$, one has
\begin{equation}
K(s|y)=p\,e^{-y\zeta_1(s)}+(1-p)\,e^{-y\theta_1(s)}+{2\,\eta_1\,p\,\bigl[e^{-y\zeta_1(s)}-e^{-y\theta_1(s)}\bigr]\over\omega_1^2
\bigl[\zeta_1(s)-\theta_1(s)\bigr]\,\bigl[\zeta_2(s)-\theta_1(s)\bigr]},
\label{eq:LTBPdensity_cont}
\end{equation}
with 
\begin{equation}
\zeta_1(s),\zeta_2(s)={\lambda_1^*-\mu_1^*\pm\sqrt{(\lambda_1^*-\mu_1^*)^2+2\,\omega_1^2\,(s+\eta_1)}\over\omega_1^2}
\label{zeta12}
\end{equation}
for $\zeta_2(s)<0<\zeta_1(s)$, and
\begin{equation}
\theta_1(s),\theta_2(s)={\lambda_2^*-\mu_2^*\pm\sqrt{(\lambda_2^*-\mu_2^*)^2+2\,\omega_2^2\,s}\over\omega_2^2}
\label{theta12}
\end{equation}
for $\theta_2(s)<0<\theta_1(s)$.
\end{proposition}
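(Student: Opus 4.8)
The plan is to compute $K(s|y)$ by Laplace--transforming the representation (\ref{density_FPT_cont}) of $k(0,t|y)$ summand by summand, the only analytic input being the classical Laplace transforms of the Wiener first-passage and absorbed densities. Expanding the square in the exponent of (\ref{FPT_density_Wiener}) (at $x=0$) and using the elementary identities $\int_0^{+\infty}t^{-3/2}e^{-a/t-bt}\,dt=\sqrt{\pi/a}\,e^{-2\sqrt{ab}}$ and $\int_0^{+\infty}t^{-1/2}e^{-a/t-bt}\,dt=\sqrt{\pi/b}\,e^{-2\sqrt{ab}}$ (for $a,b>0$), one obtains, for $s>0$,
\[
\int_0^{+\infty}e^{-st}\,\widehat g^{(i)}(0,t|y)\,dt=\exp\!\Big\{-y\,\frac{\beta_i+\sqrt{\beta_i^2+2\omega_i^2 s}}{\omega_i^2}\Big\},\qquad \beta_i=\lambda_i^*-\mu_i^*.
\]
Replacing $s$ by $s+\eta_1$ in the first environment turns the exponent into $\zeta_1(s)$ of (\ref{zeta12}), and leaving $s$ unchanged in the second environment turns it into $\theta_1(s)$ of (\ref{theta12}); this produces the first two summands $p\,e^{-y\zeta_1(s)}$ and $(1-p)\,e^{-y\theta_1(s)}$ of (\ref{eq:LTBPdensity_cont}).

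For the third summand of (\ref{density_FPT_cont}) I would invoke Fubini's theorem together with the convolution theorem: the inner $\tau$-integral is a convolution in $t$, so its Laplace transform factorizes, giving
\[
\mathcal{L}\!\left[\eta_1 p\!\int_0^{+\infty}\!\!dz\!\int_0^t e^{-\eta_1\tau}\,\widehat\alpha^{(1)}(z,\tau|y)\,\widehat g^{(2)}(0,t-\tau|z)\,d\tau\right]\!(s)=\eta_1 p\!\int_0^{+\infty}\!\widehat A^{(1)}(z,s+\eta_1|y)\,e^{-z\theta_1(s)}\,dz,
\]
where $\widehat A^{(1)}(z,u|y)$ denotes the $t$-Laplace transform of the absorbed kernel (\ref{abs_dens_Wiener}). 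Expanding the square in each of the two Gaussians that make up $\widehat\alpha^{(1)}$ and applying the same elementary integrals, one finds that $\widehat A^{(1)}(z,s+\eta_1|y)$ equals $D_1^{-1}$ times an explicit linear combination of $e^{z\zeta_1(s)}$ and $e^{z\zeta_2(s)}$ whose coefficients differ on $\{z<y\}$ and on $\{z>y\}$, where $D_1=\sqrt{\beta_1^2+2\omega_1^2(s+\eta_1)}=\tfrac12\omega_1^2\bigl(\zeta_1(s)-\zeta_2(s)\bigr)$ and $\zeta_2(s)<0<\zeta_1(s)$ are the two roots in (\ref{zeta12}).

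It then remains to split $\int_0^{+\infty}dz=\int_0^y+\int_y^{+\infty}$ and to evaluate the resulting elementary exponential integrals, all convergent since $\zeta_2(s)<0$ while $\zeta_1(s),\theta_1(s)>0$. Carrying this out, the cross-terms proportional to $e^{-y\zeta_1(s)+y\zeta_2(s)}$ cancel, and what survives is a multiple of $\bigl(e^{-y\zeta_1(s)}-e^{-y\theta_1(s)}\bigr)\bigl[(\zeta_1(s)-\theta_1(s))^{-1}-(\zeta_2(s)-\theta_1(s))^{-1}\bigr]$; rewriting the bracket as $(\zeta_2(s)-\zeta_1(s))/\bigl[(\zeta_1(s)-\theta_1(s))(\zeta_2(s)-\theta_1(s))\bigr]$ and substituting $\zeta_2(s)-\zeta_1(s)=-2D_1/\omega_1^2$ cancels $D_1$ and reproduces exactly the third term of (\ref{eq:LTBPdensity_cont}).

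The main obstacle is organizational rather than conceptual: since the two modes of $\widehat A^{(1)}$ have exponents of opposite sign in $z$, the $z$-integral must be split at $z=y$ and each piece matched to the correct sign of $\zeta_j(s)-\theta_1(s)$, after which one checks that everything except the announced terms cancels. An alternative that bypasses this bookkeeping is to Laplace-transform the system (\ref{eq_abs_cont}) directly in $t$: the first equation decouples, so $\widehat h_1(\cdot,s|y)$ can be written as a Green's-function solution on $(0,+\infty)$ with the absorbing condition (\ref{bound_condition_abs_cont}) at $0$ and a jump $-2p/\omega_1^2$ of the $x$-derivative at $y$; feeding $\widehat h_1$ as a source into the second equation, solving again, and using the flux identity (\ref{density_FPT_1_cont}), namely $K(s|y)=\tfrac{\omega_1^2}{2}\,\partial_x\widehat h_1(0,s|y)+\tfrac{\omega_2^2}{2}\,\partial_x\widehat h_2(0,s|y)$, yields (\ref{eq:LTBPdensity_cont}) after the same elementary algebra in the roots $\zeta_1$, $\zeta_2$, $\theta_1$.
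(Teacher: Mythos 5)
Your proposal is correct; note that the paper states this proposition without any proof, so there is nothing to compare against except the preceding integral representation (\ref{density_FPT_cont}), which is exactly the starting point you take. Transforming that representation term by term is the natural route: the closed forms $\mathcal{L}[\widehat g^{(i)}(0,t|y)](s)=\exp\{-y(\beta_i+\sqrt{\beta_i^2+2\omega_i^2 s})/\omega_i^2\}$ and the piecewise-exponential form of $\widehat A^{(1)}(z,s+\eta_1|y)$ follow from the two elementary integrals you quote, the convolution theorem handles the $\tau$-integral, and splitting the $z$-integral at $z=y$ does produce the cancellation of the $e^{-y\zeta_1+y\zeta_2-y\theta_1}$ cross-terms and the factor $2/\omega_1^2$ from $\zeta_1(s)-\zeta_2(s)=2D_1/\omega_1^2$. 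The only blemish is a sign slip in your summary of the surviving bracket: what remains is $(e^{-y\theta_1}-e^{-y\zeta_1})\bigl[(\zeta_1-\theta_1)^{-1}-(\zeta_2-\theta_1)^{-1}\bigr]$, i.e.\ the bracket equals $(\zeta_2-\zeta_1)/\bigl[(\zeta_1-\theta_1)(\zeta_2-\theta_1)\bigr]$ multiplying $e^{-y\theta_1}-e^{-y\zeta_1}$ rather than $e^{-y\zeta_1}-e^{-y\theta_1}$; with that ordering the two minus signs cancel and (\ref{eq:LTBPdensity_cont}) comes out with the correct overall sign. Your alternative resolvent argument (Laplace-transforming the system (\ref{eq_abs_cont}) and solving the resulting ODEs with the absorbing condition and the flux identity (\ref{density_FPT_1_cont})) is equally valid and avoids the case split at $z=y$.
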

\par
From (\ref{eq:LTBPdensity_cont}) we determine the ultimate absorbing probability in  $(0,1)$ or in $(0,2)$. 
Since $\eta_2=0$, if $\lambda_2^*\leq \mu_2^*$ then  $\theta_1(0)=0$, so that 
${\mathbb P}({\cal T}_y<\infty)=1$, whereas if $\lambda^*_2> \mu^*_2$, then we have 
$\theta_1(0)=2(\lambda_2^*-\mu_2^*)/\omega_2^2$, so that for $y\in\mathbb{R}^+$ it results:
\begin{eqnarray}
&&\hspace*{-0.4cm}\mathbb{P}({\cal T}_y<+\infty)= \int_0^{+\infty}k(0,t|y)\;dt=p\,e^{-y\zeta_1(0)}+(1-p)\,\exp\Bigl\{-{2(\lambda_2^*-\mu_2^*)y\over\omega_2^2}\Bigr\}\nonumber\\
&&\hspace*{2.2cm}+{2\,\eta_1\,p\,\Bigl[e^{-y\zeta_1(0)}-\exp\Bigl\{-{2(\lambda_2^*-\mu_2^*)y\over\omega_2^2}\Bigr\}\Bigr]\over\omega_1^2
\Bigl[\zeta_1(0)-{2(\lambda_2^*-\mu_2^*)\over\omega_2^2}\Bigr]\,\Bigl[\zeta_2(0)-{2(\lambda_2^*-\mu_2^*)\over\omega_2^2}\Bigr]},
\label{eq:LTBPprob_cont}
\end{eqnarray}
with $\zeta_1(0),\zeta_2(0)$ given in (\ref{zeta12}) for $s=0$. 
%
\begin{figure}[t]  
\centering
\subfigure[$\lambda_1^*=1.0,\mu_1^*=0.5$]{\includegraphics[width=0.42\textwidth]{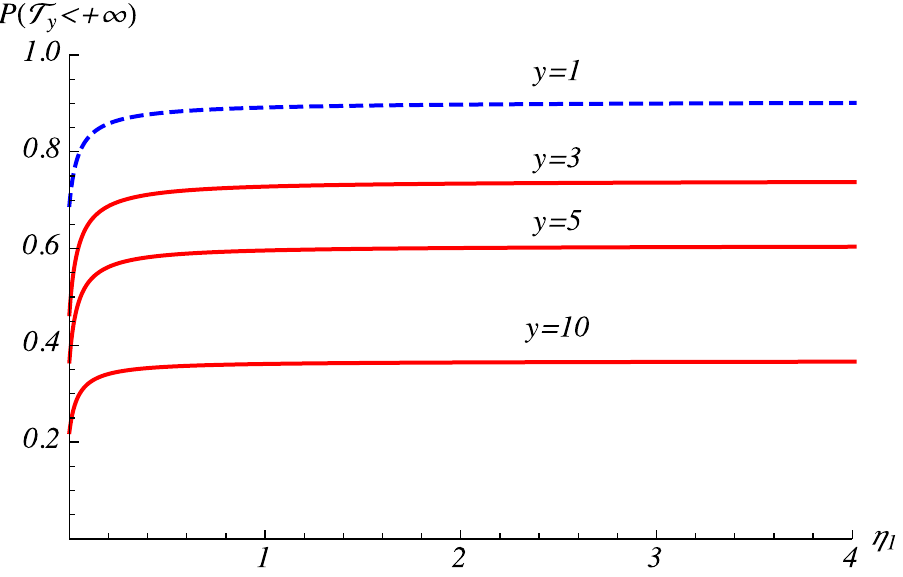}}
\hspace*{4mm}
\subfigure[$\lambda_1^*=0.5,\mu_1^*=1.0$]{\includegraphics[width=0.42\textwidth]{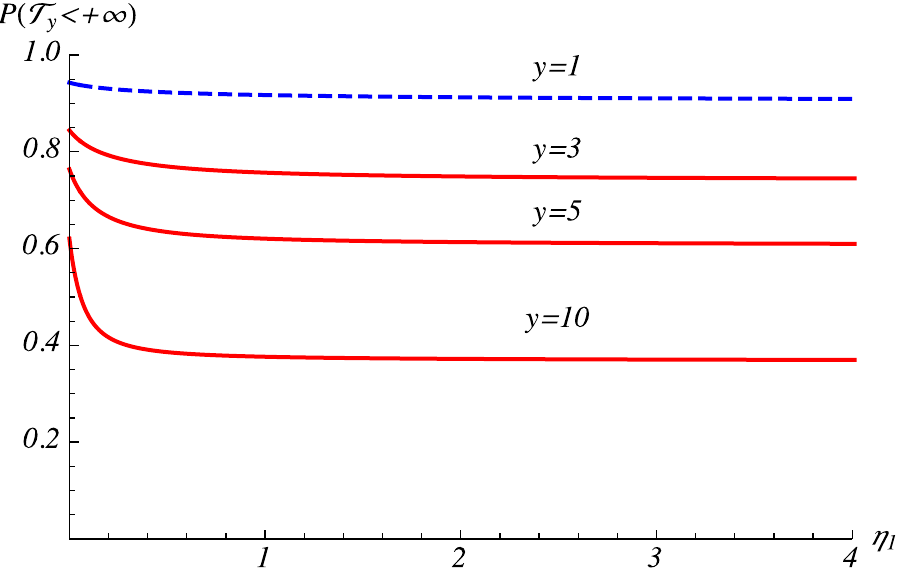}}\\
\caption{Plots of FPT probabilities $\mathbb{P}({\cal T}_y<+\infty)$ as function of $\eta_1$ for  $\eta_2=0$, $\lambda^*_2=1.2$, $\mu^*_2=1.0$, $\omega_1^2=1$,  $\omega_2^2=4$ 
and  $p=0.4$.}
\label{fig17}
\end{figure}
%
For $\lambda_2^*>\mu_2^*$, in Figure~\ref{fig17} we plot  $\mathbb{P}({\cal T}_y<+\infty)$, given in (\ref{eq:LTBPprob_cont}), as function of $\eta_1$ for $y=1,3,5,10$. 
\par
When $\lambda_2^*<\mu_2^*$, $\eta_1>0$ and $y\in\mathbb{R^+}$, from (\ref{eq:LTBPdensity_cont}) we obtain the FPT mean 
\begin{equation}
\mathbb{E}({\cal T}_y)={y\over \mu_2^*-\lambda_2^*}+{p\over\eta_1}\,\Bigl(1-{\mu_1^*-\lambda_1^*\over\mu_2^*-\lambda_2^*}\Bigr)\Bigl(1-e^{-y\zeta_1(0)}\Bigr).
\label{FPT_mean_contin}
\end{equation}
\par
Finally, in Figure~\ref{fig18} we plot the mean (\ref{FPT_mean_contin}) as function of $\eta_1$; since $\lambda_2^*< \mu_2^*$, the first passage through zero state is a 
certain event.
\begin{figure}[t]  
\centering
\subfigure[$\lambda_1^*=1.0,\mu_1^*=0.5$]{ \includegraphics[width=0.42\textwidth,keepaspectratio]{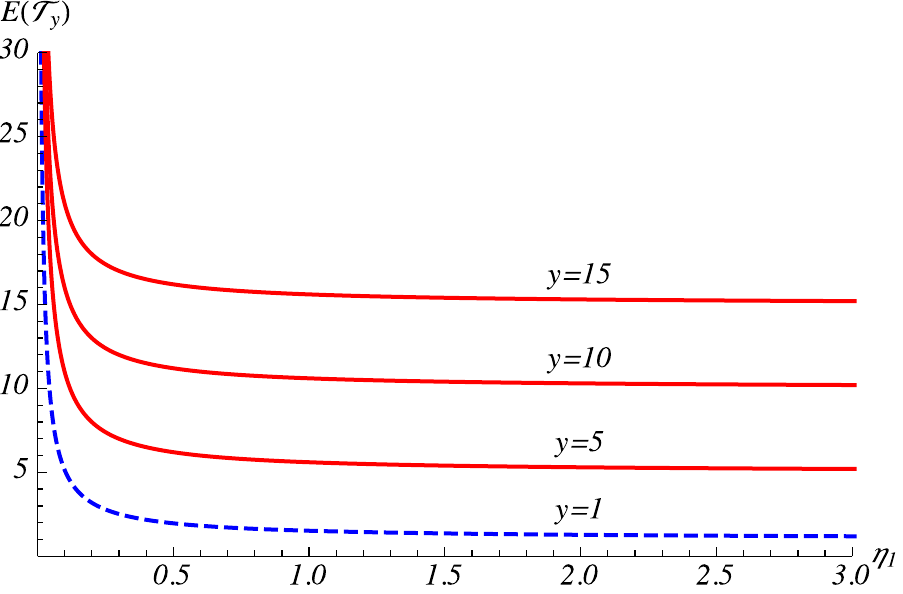}}
\hspace*{4mm}
\subfigure[$\lambda_1^*=0.5,\mu_1^*=1.0$]{ \includegraphics[width=0.42\textwidth,keepaspectratio]{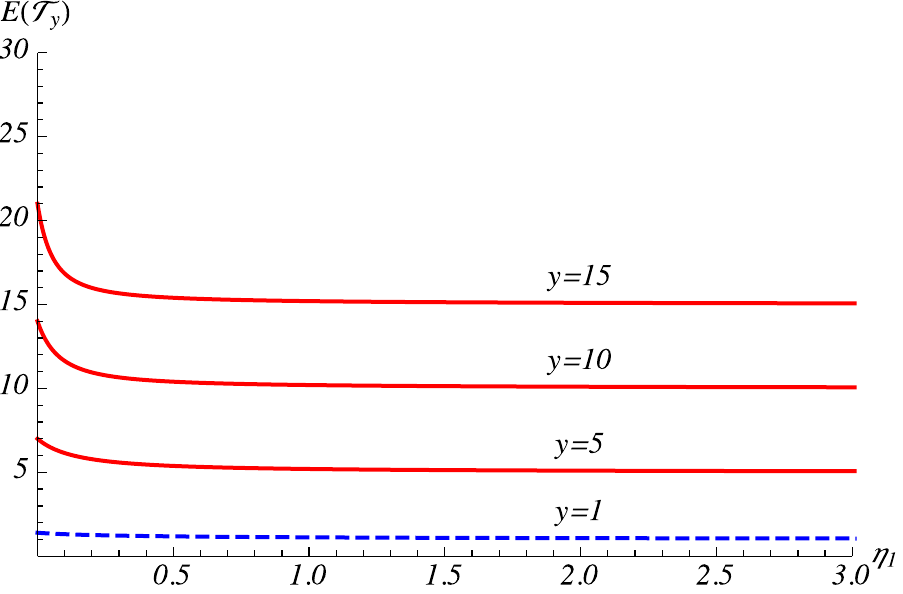}}\\
\caption{Plots of FPT mean $\mathbb{E}({\cal T}_y)$ as function of $\eta_1$ for  
$\eta_2=0$, $\lambda^*_2=1.0$, $\mu^*_2=2.0$, $\omega_1^2=1$,  $\omega_2^2=4$ and  $p=0.4$.}
\label{fig18}
\end{figure}
\section*{Concluding remarks} 
In this paper we considered a an $M/M/1$ queue whose behavior fluctuates randomly between 
two different environments according to a two-state continuous-time Markov chain. 
\par
We first get the steady-state distribution of the system, which is expressed via a generalized mixture 
of two geometric distributions. A remarkable result is that the system admits of a steady-state 
distribution even if one of the alternating environments does not possess a steady state. 
Hence, the switching between the environments can be used to stabilize a non stationary 
$M/M/1$ queue by means of the random alternation with a similar queue characterized by steady state. 
\par
Moreover, attention has been given to the transient distribution of the alternating queue, which can be 
expressed in a series form involving the queue-length distribution in absence of switching. 
A similar result is obtained also for the first-passage-time density through the zero state, in order 
to investigate the busy period. 
\par
The second part of the paper has been centered on a heavy-traffic approximation of the queue-length 
process, that leads to an alternating Wiener process restricted by a reflecting boundary at zero. 
The analysis of the approximating process  has been devoted to the steady-state density, 
which is expressed as a generalized mixture of two exponential densities. Moreover, we determined 
the transition density when only one type of switch is allowed.  
Such density can be decomposed in an integral form  involving the expressions of the  Wiener 
process in the presence of a reflecting boundary at zero. 
Finally, we analyzed the first-passage-time density through the zero state, which 
gives a suitable approximation of the busy period density. 
\section*{Acknowledgements} 
This research is partially supported by the group GNCS of INdAM.
%

%
\end{document}